\numberwithin{equation}{section}
\newtheorem{thm}{Theorem}[section]
\newtheorem{lemma}[thm]{Lemma}
\newtheorem{pro}[thm]{Proposition}
\newtheorem{rem}[thm]{Remark}
\newcommand{\be}{\begin{equation}}
\newcommand{\ee}{\end{equation}}
\newcommand{\bea}{\begin{eqnarray*}}
\newcommand{\eea}{\end{eqnarray*}}
\begin{document}
\date{}
\title{\textbf{The blow-up dynamics for the divergence Schr\"{o}dinger equations with inhomogeneous nonlinearity  \footnote{2020 Mathematics Subiect Classification: 35Q55, 35B44.
}}}
\author{Bowen Zheng\footnote{E-mail: \emph{bwen\_zj1516@126.com} (B. Zheng).},\quad \  Tohru Ozawa \footnote{Corresponding author: \emph{txozawa@waseda.jp} (T. Ozawa).
}
\\\emph{College of Sciences, China Jiliang University},\\ \emph{Hangzhou 310018, P. R. China},\\
\emph{Department of Applied Physics, Waseda University},\\ \emph{Tokyo 169-8555, Japan}} \maketitle

\begin{abstract}
This paper is dedicated to the blow-up solution for the divergence Schr\"{o}dinger equations with inhomogeneous nonlinearity (dINLS for short)
\[i\partial_tu+\nabla\cdot(|x|^b\nabla u)=-|x|^c|u|^pu,\quad\quad u(x,0)=u_0(x),\]
where $2-n<b<2$, $c>b-2$, and $np-2c<(2-b)(p+2)$.
First, for radial blow-up solutions in $W_b^{1,2}$, we prove an upper bound on the blow-up rate for the intercritical dNLS. Moreover, an $L^2$-norm concentration in the mass-critical case is also obtained by giving a compact lemma. Next, we turn to the non-radial case. By establishing two types of Gagliardo-Nirenberg inequalities, we show the existence of finite time blow-up solutions in $\dot{H}^{s_c}\cap \dot{W}^{1,2}_b$, where $\dot{H}^{s_c}=(-\Delta)^{-\frac{s_c}{2}}L^2$, and $\dot{W}_b^{1,2}=|x|^{-\frac{b}{2}}(-\Delta)^{-\frac{1}{2}}L^2$. As an application, we obtain a lower bound for this blow-up rate, generalizing the work of Merle and Rapha\"{e}l [Amer. J. Math. 130(4) (2008), pp. 945-978] for the classical NLS equations to the dINLS setting.
\end{abstract}

\section{Introduction}
In this paper, we consider the Cauchy problem of the following divergence Schr\"{o}dinger equations with inhomogeneous nonlinearity (dINLS)
\begin{equation}\label{a0}
\left\{
\begin{aligned}
 & i\partial_tu+\nabla\cdot(|x|^b\nabla u)=-|x|^c|u|^pu,\\
 &u(x,0)=u_0(x), \quad\quad (x,t)\in\mathbb{R}^n\times\mathbb{R},
\end{aligned}\right.
\end{equation}
where $u=u(x,t)$ is the complex-valued function, the parameters $b$ and $c$ satisfy $2-n<b<2$, $c>b-2$, and $\textbf{p}_c:=np-2c<(2-b)(p+2)$.
Such type of variable coefficient Schr\"{o}dinger equation appears in a variety of physical settings, such as plasma physics and the research of non-equilibrium magnetism (see e.g. \cite{LL}, \cite{PT}-\cite{PTW2}, \cite{ST}, \cite{WW3}, \cite{ZZW} and the references therein).

Let us start by observing that if $u(x, t)$ is a solution to \eqref{a0}, so does $u_\lambda$ given by
\[u_\lambda(x, t)=\lambda^{\frac{2-b+c}{p}} u(\lambda x, \lambda^{2-b}t),\quad \mbox{for}\ \lambda>0.\]
A direct calculation shows that
\begin{equation}\nonumber
\|u_\lambda(\cdot,0)\|_{\dot{H}^{s}}=\lambda^{s+\frac{2-b+c}{p}-\frac{n}{2}}\|u_0\|_{\dot{H}^{s}}.
\end{equation}
This gives the critical Sobolev index
\begin{equation}\label{f71}
s_c:=\frac{n}{2}-\frac{2-b+c}{p},
\end{equation}
which is such that the standard homogeneous Sobolev space $\dot{H}^{s_c}=(-\Delta)^{-\frac{s_c}{2}}L^2$ related to \eqref{a0} leaves the scaling symmetry invariant.

We are interested in the case $0\leq s_c<\frac{2-b}{2}$ in this paper. The case $s_c=0$ is known as the \emph{mass-critical}. Rewriting this last condition in terms of $\textbf{p}_c$, we have
\[\textbf{p}_c=2(2-b).\]
On the other hand, the case $0<s_c<\frac{2-b}{2}$ is known as the \emph{mass-supercritical} and \emph{energy-subcritical} (or just \emph{intercritical}). We can also reformulate this condition in terms of $\textbf{p}_c$ as
\[2(2-b)<\textbf{p}_c<(2-b)(p+2),\]
or equivalently, $p_{b, \ast}<p<p_{b}^\ast,$ where $p_{b, \ast}:=\frac{2(2-b+c)}{n}$ and
\begin{equation}p_{b}^\ast:=
\left\{
\begin{aligned}
 & \infty, \quad\quad\quad\quad\quad\ n\leq2-b,\\\nonumber
 &\frac{2(2-b+c)}{n-2+b}, \quad n>2-b.
\end{aligned}\right.
\end{equation}

The main purpose of the present paper is to study long time dynamics of the radial and non-radial solutions to the dINLS \eqref{a0}. As is standard practice, we study the dINLS via its integral version
\[u(t)=e^{it\mathcal{A}_b}u_0+i\int_0^te^{i(t-s)\mathcal{A}_b}(x|^c|u(s)|^pu(s))ds,\]
where $\mathcal{A}_b$ is the operator defined by $\mathcal{A}_b=\nabla\cdot(|x|^b\nabla)$.
Since the variable coefficient $|x|^b$ with $b<0$ can be singular at $x=0$, the local well-posedness analysis of the dINLS \eqref{a0} is more subtle and intricate in several aspects, for example the energy space associated with $\mathcal{A}_b$ is no longer usual Sobolev space, and Strichartz estimate arguments fails. To our aim, we take the LWP in $\dot{W}_b^{s,2}$ related to the dINLS \eqref{a0} for $0\leq s\leq1$ as an assumption when necessary and build part of our conditional result upon it, where $\dot{W}_b^{s,2}:=|x|^{-\frac{b}{2}}(-\Delta)^{-\frac{s}{2}}L^2$ is the weighted Sobolev space. This means that we always assume that for any $u_0\in \dot{W}_b^{s,2}$, the dINLS admits a unique local solution $u$ in $C([0, T^\ast);\ \dot{W}_b^{s,2})$ with the maximal lifetime $0<T^\ast<+\infty$. 
If
\[\lim_{t\rightarrow T^\ast}\|u(t)\|_{\dot{W}_b^{s,2}}=\infty,\]
we call such $u$ is the finite time blow-up solution when $T^\ast<+\infty$.
In addition, the solutions of dINLS formally conserve their mass and energy (see e.g. \cite{PTW2}), 
\begin{equation}\label{d26}
M(u(t)):=\|u\|^2_{2}=M(u_0)
\end{equation}
and
\begin{equation}\label{c10}
E(u(t)):=\frac{1}{2}\|\nabla u\|^2_{b,2}-\frac{1}{p+2}\|u\|^{p+2}_{c,p+2}=E(u_0).
\end{equation}

The dINLS model \eqref{a0} can be interpreted as an extension to the following classical inhomogeneous nonlinear Schr\"{o}dinger equations (INLS)
\begin{equation}\label{d4}
i\partial_tu+\Delta u=-|x|^c|u|^pu
\end{equation}
or even the well-known NLS equation (case $b=c=0$ in \eqref{a0}), which can model nonlinear optical systems with spatially dependent interactions (see \cite{BPVT}).
We recall the known results of blow-up solutions for the INLS equation. Genoud and Stuart \cite{GS} firstly established the local well-posedness in $H^1$. Farah \cite{LGF} proved the existence of blow-up solutions for $|x|u_0\in L^2$. Dinh \cite{DVD} proved the existence of blow-up solutions in radial case without the restriction $|x|u_0\in L^2$. This blow-up result was also extended to general non-radial case by Ardila and Cardoso \cite{AC}, Bai and Li \cite{BL} recently.
We would also like to mention that, the more delicate blow-up dynamics are extensively studied for the INLS equation. Cardoso and Farah \cite{CF} proved an upper bound for the blow-up rate for radial initial data in $H^1$, following the ideas in the remarkable work \cite{MRS} by Merle, Rapha\"{e}l and Szeftel for the classical NLS equation.
Using a profile decomposition technique, Campos and Cardoso \cite{CC} studied the critical $L^2$-norm concentration phenomenon for the INLS equation, (see also \cite{AB}, \cite{VDD}, \cite{G} for various dispersive models).
Some other issues, such as minimal mass blow-up, sharp blow-up rates, etc, related to the blow-up dynamics of INLS had been addressed in \cite{BCD}, \cite{CG2}, \cite{GF2}, \cite{RS} and the references therein. However, to the best of our knowledge, few results are known for the existence of blow-up solutions and the quantitative descriptions of the blow-up dynamics in our divergence case \eqref{a0} with $b\neq0$.

Our first goal of this paper is to investigate the dynamical behavior of the radial blow-up solutions to \eqref{a0} in $W^{1,2}_b$.
As we see, the presence of variable coefficients $|x|^b$ and $|x|^c$ in \eqref{a0} introduce several challenging technical difficulties in the study of this problem.
In the paper \cite{ZOZ}, the authors have proved the existence of finite time blow-up solutions to the dINLS in $W^{1,2}_b$ . The results in \cite{ZOZ} show that, in the intercritical case, radial initial data $u_0\in W^{1,2}_b$ with negative energy will cause finite time blow-up. Else if the initial energy is non-negative and below the ground state, then the finite time blow-up follows from a localized virial estimate and the following radial Gagliardo-Nirenberg inequality
\begin{equation}\label{d21}
\int|x|^c|u|^{p+2}dx\leq C_{GN}\|\nabla u\|_{b,2}^{\frac{\textbf{p}_c}{2-b}}\|u\|_{2}^{\frac{(2-b)(p+2)-\textbf{p}_c}{2-b}}
\end{equation}
for $2-n<b\leq0$, $c\geq b-2$, where the best constant $C_{GN}$ is given by
\begin{equation}\nonumber
C_{GN}=\frac{(2-b)(p+2)\textbf{p}_c^{-\frac{\textbf{p}_c}{2(2-b)}}}{\left((2-b)(p+2)-\textbf{p}_c\right)^{1-\frac{\textbf{p}_c}{2(2-b)}}\|Q\|_2^p}
\end{equation}
and $Q$ is the ground solution to the following elliptic equation
\begin{equation}\label{d27}
\nabla\cdot(|x|^b\nabla Q)+|x|^c|Q|^pQ-Q=0.
\end{equation}
Although the energy threshold gives a sufficient condition on the blow-up, no general quantitative information such as the shape of blow-up solution or the blow-up time rate is known. Therefore, a natural question arising is whether there exists a blow-up solution for the dINLS when the initial data is non-radial. Moreover, what are dynamical properties of those blow-up solutions ?

Inspired by this problem, we first investigate the intercritical dINLS with radial $u_0\in W^{1,2}_b$ and prove a space-time upper bound on the blow-up rate of this solution.
\begin{thm}\label{thm1.3}
For $2-2n\leq b<0$, $c\geq b-2,$ $2(2-b)<\textbf{p}_c<(2-b)(p+2)$ and $p<4$, let $u_0\in W_b^{1,2}$ be radial and assume that
the corresponding solution $u\in C([0,T^\ast);\ W_b^{1,2})$ to \eqref{a0} blows up in finite time. Then the following space-time upper bound
\begin{equation}\label{c23}
\int_t^{T^\ast}(T^\ast-\tau)\|\nabla u(\tau)\|_{b,2}^2d\tau\leq C_{u_0}(T^\ast-t)^{\frac{2\beta}{\beta+1}}
\end{equation}
holds for $\beta=\frac{(4-p)(2-b)}{2\textbf{p}_c-(2-b)p}$, where $C_{u_0}$ is a positive constant depending on $b, \textbf{p}_c, u_0$.
\end{thm}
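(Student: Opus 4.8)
The plan is to adapt the localized-virial / Glassey scheme of Merle--Rapha\"el--Szeftel \cite{MRS} and its inhomogeneous refinement by Cardoso--Farah \cite{CF} to the divergence operator $\mathcal{A}_b$. I would fix a radial cutoff $\phi_R(x)=R^2\phi(|x|/R)$ with $\phi(r)=r^2$ for $r\le 1$ and $\phi$ bounded with bounded derivatives for $r\ge 2$, and set the localized variance $\mathcal{V}_R(t)=\int\phi_R|u|^2\,dx$ together with $g(t)=\|\nabla u(t)\|_{b,2}^2$. Using \eqref{a0} one finds $\mathcal{V}_R'(t)=2\,\mathrm{Im}\int|x|^b\nabla\phi_R\cdot\nabla u\,\bar u\,dx=:M_R(t)$, and one further differentiation yields the localized virial identity already exploited in \cite{ZOZ}. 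On $\{|x|\le R\}$ this reproduces the exact virial, which via the energy conservation \eqref{c10} and the intercritical condition $\textbf{p}_c>2(2-b)$ contributes a strictly negative multiple of the kinetic energy; thus I expect
\[\mathcal{V}_R''(t)\le c_1E(u_0)-c_2\,g(t)+\mathrm{Err}_R(t),\qquad c_2>0,\]
where $\mathrm{Err}_R$ gathers the contributions of the transition region $\{|x|\gtrsim R\}$, in particular the nonlinear piece $\int_{|x|\ge R}|x|^c|u|^{p+2}\,dx$.

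The first key step is to control $\mathrm{Err}_R$, and here the radial hypothesis enters. A weighted Strauss-type inequality, valid precisely because $b\ge 2-2n$ and $c\ge b-2$, should give the pointwise decay $\|u\|_{L^\infty(|x|\ge R)}^2\lesssim R^{-(b+2n-2)/2}\|\nabla u\|_{b,2}\|u\|_2$, which combined with the Gagliardo--Nirenberg inequality \eqref{d21} yields $\int_{|x|\ge R}|x|^c|u|^{p+2}\lesssim R^{-\theta}g(t)^{p/4}$ with $\theta=\frac{\textbf{p}_c}{2-b}-\frac{p}{2}>0$. The exponent $p/4<1$ on $g$, i.e. that the error is strictly subquadratic in the kinetic energy, is exactly where $p<4$ is used; a Young inequality then gives $\mathrm{Err}_R(t)\le\frac{c_2}{2}g(t)+C\,R^{-\mu}$ with $\mu=\frac{4\theta}{4-p}=\frac{2}{\beta}$, so that $c_2\,g(t)\le-2\,\mathcal{V}_R''(t)+2c_1E(u_0)+C\,R^{-\mu}$.

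Next I would run a localized Glassey argument backward in time. Multiplying by $(T^\ast-\tau)$, integrating over $[t,T^\ast)$ and integrating by parts twice produces the boundary contributions $\mathcal{V}_R(t)$ and $(T^\ast-t)M_R(t)$; the mass conservation \eqref{d26} gives $0\le\mathcal{V}_R(t)\le CR^2M(u_0)$, while a weighted Cauchy--Schwarz inequality gives $|M_R(t)|\le C(u_0)R\,g(t)^{1/2}$. Writing $J(t):=\int_t^{T^\ast}(T^\ast-\tau)g(\tau)\,d\tau$ and using $\mathcal{V}_R\ge 0$, this yields the closed inequality
\[J(t)\le C_1R^2+C_2(T^\ast-t)R\,g(t)^{1/2}+C_3R^{-\mu}(T^\ast-t)^2+C_4(T^\ast-t)^2.\]
I then optimize by choosing $R=R(t)=(T^\ast-t)^{\beta/(\beta+1)}$, which balances $R^2$ against $R^{-\mu}(T^\ast-t)^2$ (both of order $(T^\ast-t)^{2\beta/(\beta+1)}$, the claimed rate), and recast the middle term through the exact identity $g(t)=-(T^\ast-t)^{-1}J'(t)$. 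This turns the estimate into a first-order Riccati-type differential inequality $J\le C(T^\ast-t)^{\alpha}+C_2(T^\ast-t)^{(\alpha+1)/2}(-J')^{1/2}$ with $\alpha=\frac{2\beta}{\beta+1}$, which a continuity (barrier) argument integrates to give $J(t)\le C_{u_0}(T^\ast-t)^{\alpha}$, i.e. \eqref{c23}.

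The hard part, I expect, is twofold and concentrated in the two weighted estimates. First, the singular weight $|x|^b$ with $b<0$ makes both the control of $M_R(t)$ and of the near-origin part of $\mathcal{V}_R$ delicate: one must invoke the appropriate weighted Hardy and radial Sobolev inequalities, and it is exactly here that the stronger restriction $b\ge 2-2n$ (rather than the standing $b>2-n$) is forced. Second, deriving the sharp exterior exponent $\theta=\frac{\textbf{p}_c}{2-b}-\frac{p}{2}$, which is what makes the optimization produce precisely $\beta=\frac{(4-p)(2-b)}{2\textbf{p}_c-(2-b)p}$, requires distributing the weights $|x|^b,|x|^c$ across the radial $L^\infty$ bound and \eqref{d21} with scale-invariant bookkeeping. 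Once these two weighted inequalities are secured, the double integration and the Riccati closing are routine.
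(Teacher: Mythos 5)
Your overall architecture --- localized virial, radial Strauss control of the exterior nonlinearity, Young's inequality, double integration in time, and optimization of $R$ --- is exactly the paper's, and your Riccati/barrier closing is equivalent to the paper's integration of $\bigl(\tfrac{f(\tau)}{T^\ast-\tau}\bigr)'\leq C(T^\ast-\tau)^{-2/(\beta+1)}$. However, there is a genuine gap at the very first step: the choice of virial weight. You take $\mathcal{V}_R=\int\phi_R|u|^2dx$ with $\phi_R\sim|x|^2$ in the bulk, so the multiplier appearing in $\mathcal{V}_R'$ is $|x|^b\nabla\phi_R\sim|x|^bx$. For the divergence operator $\nabla\cdot(|x|^b\nabla)$ this does \emph{not} ``reproduce the exact virial'' on $\{|x|\le R\}$: differentiating once more, the kinetic contribution comes out with weight $|x|^{2b}$ (namely $\int|x|^{2b}|\nabla u|^2dx$ up to constants) and the potential contribution with weight $|x|^{b+c}$, neither of which matches the conserved energy \eqref{c10}; hence the key substitution $\mathcal{V}_R''\le c_1E(u_0)-c_2\|\nabla u\|_{b,2}^2+\mathrm{Err}_R$ is not available for $b\neq0$. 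The correct weight is $\psi_R$ with $\nabla\psi_R=|x|^{-b}\nabla\phi_R$, i.e.\ $\psi_R\sim|x|^{2-b}$ in the bulk, so that the multiplier is $\nabla\phi_R\sim x$ and one gets $V_{\psi_R}''=4(2-b)\|\nabla u\|_{b,2}^2-\frac{4\textbf{p}_c}{p+2}\|u\|_{c,p+2}^{p+2}+\mathcal{R}$, which does combine with \eqref{c10}.

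Second, your weighted exponents are the $b=0$ ones and are not achievable for $b<0$. A scaling check (replace $u$ by $u(\lambda\,\cdot)$) shows that any bound $\int_{|x|\ge R}|x|^c|u|^{p+2}dx\lesssim R^{-a}\|\nabla u\|_{b,2}^{p/2}\|u\|_2^{(p+4)/2}$ forces $a=\frac{2\textbf{p}_c-(2-b)p}{4}=\frac{2-b}{2}\,\theta$, not your $\theta=\frac{\textbf{p}_c}{2-b}-\frac{p}{2}$; this is also precisely what the radial Strauss inequality plus mass conservation yields (Gagliardo--Nirenberg \eqref{d21} plays no role here --- it would give the superquadratic power $g^{\textbf{p}_c/(2(2-b))}$, whereas the subquadratic $g^{p/4}$ comes from Strauss and mass alone). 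Likewise, with the corrected weight the boundary terms are $V_{\psi_R}\le CR^{2-b}\|u_0\|_2^2$ and $|V_{\psi_R}'|\le CR^{(2-b)/2}\|\nabla u\|_{b,2}\|u_0\|_2^2$, not $R^2$ and $R\,g^{1/2}$ (indeed, with your weight the Cauchy--Schwarz bound for $M_R$ runs into the singular factor $|x|^{2+b}$ near the origin). Your final rate $\frac{2\beta}{\beta+1}$ survives only because these mistakes are mutually consistent --- they all amount to setting $b=0$ in the bookkeeping, and balancing the true $R^{2-b}$ against the true $R^{-(2-b)/\beta}(T^\ast-t)^2$ gives the same exponent as balancing your $R^2$ against $R^{-2/\beta}(T^\ast-t)^2$. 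Had you mixed them (say, your $R^2$ boundary term with the correct error $R^{-(2-b)/\beta}(T^\ast-t)^2$), the optimization would only produce the strictly weaker exponent $\frac{4\beta}{2\beta+2-b}<\frac{2\beta}{\beta+1}$. The proof is repairable, but every $R$-exponent must carry $2-b$ in place of $2$, starting with the weight itself.
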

This theorem follows by an application of the localized virial estimate satisfied by the solutions to \eqref{a0}. It is worth mentioning that Theorem \ref{thm1.3} only holds for $p<4$, which appears in view of our nonlinear estimates \eqref{f91}, see Section 3 below.
We note also that the bound \eqref{c23} implies
\[\liminf_{t\rightarrow T^\ast}(T^\ast-t)^{\frac{1}{1+\beta}}\|\nabla u(t)\|_{b,2}<+\infty,\]
which is an upper bound on the blow-up rate along a sequence of times.

Now we restrict our attention to the mass-critical case, that is, $p=p_{b,\ast}$.
For the dINLS \eqref{a0}, using the energy conservation \eqref{c10} and Gagliardo-Nirenberg inequality \eqref{d21}, we find that
\[E(u_0)\geq \frac{1}{2}\|\nabla u\|_{b,2}^2\left(1-\frac{\|u\|^{p_{b,\ast}}_2}{\|Q\|^{p_{b,\ast}}_2}\right).\]
Thus, if there exists a solution $u$ that blows up in finite time $T^\ast$ satisfying $\|u_0\|_2\geq\|Q\|_2$, then we may have
\[\sup_{t\in [0, T^\ast)}\|u(t)\|_2\geq\|Q\|_2.\]
This suggests us to investigate the occurrence of the $L^2$-norm concentration for finite time blow-up solutions to the dINLS equation.

Unlike the classical NLS equation, the dINLS \eqref{a0} is not invariant under space translation. The profile decomposition in Hmidi and Kerraani \cite{HK} that is always used to deal with the $L^2$-norm concentration phenomenon will be unavailable for the divergence case. This fact leads to some additional technical difficulties for obtaining the $L^2$-norm concentration result for \eqref{a0}. In this paper, we will take the advantage of the compactness embedding result presented in Lemma \ref{lem1}, and  prove that the $L^2$-norm concentration for the mass-critical dINLS occurs at the origin for finite time blow-up solutions. This result is formulated below.

\begin{thm}\label{thm1.1}
Let $2-n<b\leq0$ and $b-2<c<b-2+2n$. Let $u$ be a radial $W_b^{1,2}$-solution to \eqref{a0} in the mass-critical case
and assume that it blows up in finite time $T^\ast>0$ satisfying
\begin{equation}\label{c8}
\|\nabla u(t)\|_{b,2}\geq\frac{C}{\sqrt{T^\ast-t}},\quad\quad 0\leq t<T^\ast
\end{equation}
with some universal constant $C>0$ and $t$ close enough to $T^\ast$.
If $\lambda\in C([0, T^\ast))$ is a positive function satisfying $\lambda(t)\|\nabla u(t)\|_{b,2}^{\frac{2}{2-b}}\rightarrow+\infty$ as $t\rightarrow T^\ast$, then

(1) (\textbf{$L^2$-norm concentration})
\begin{equation}\label{c7}
\liminf_{t\rightarrow T^\ast}\int_{|x|\leq\lambda(t)}|u|^2dx\geq\|Q\|_2^2,
\end{equation}
where $Q$ is the  solution to the elliptic equation \eqref{d27}.

(2) there is no sequence $\{t_n\}$ such that $t_n\rightarrow T^\ast$ and $u(t_n)$ converges strongly in $L^2$ as $n\rightarrow\infty$.
\end{thm}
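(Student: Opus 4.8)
The plan is to run a rescaling-and-compactness argument adapted to the weighted, radial setting, replacing the translation-based profile decomposition (which is unavailable here) by the compact embedding of Lemma \ref{lem1}. Fix an arbitrary sequence $t_n\to T^\ast$ and set $\rho_n:=\|\nabla u(t_n)\|_{b,2}^{-\frac{2}{2-b}}$, which tends to $0$ since $u$ blows up in $W_b^{1,2}$. Define the renormalized sequence $v_n(y):=\rho_n^{n/2}u(t_n,\rho_n y)$. A direct change of variables, together with the mass-critical identity $\tfrac{np}{2}=2-b+c$ (equivalently $s_c=0$), shows that this scaling preserves the $L^2$ mass while acting on the other two functionals by the factor $\rho_n^{2-b}$; concretely $\|v_n\|_2=\|u_0\|_2$, $\|\nabla v_n\|_{b,2}=1$, and $E(v_n)=\rho_n^{2-b}E(u_0)=\|\nabla u(t_n)\|_{b,2}^{-2}E(u_0)$. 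By mass/energy conservation \eqref{d26}--\eqref{c10} and the blow-up of $\|\nabla u(t_n)\|_{b,2}$, we get $E(v_n)\to0$, whence, from $\|\nabla v_n\|_{b,2}=1$, the first-order relation $\|v_n\|_{c,p+2}^{p+2}\to\frac{p+2}{2}$.

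Next I would extract the concentration profile. Since $\{v_n\}$ is bounded in $W_b^{1,2}$ and radial, Lemma \ref{lem1} yields (up to a subsequence) a weak limit $v_n\rightharpoonup V$ in $W_b^{1,2}$ together with strong convergence $v_n\to V$ in the weighted space $L^{p+2}(|x|^c\,dx)$. The strong convergence transfers the limit of the nonlinear functional, giving $\|V\|_{c,p+2}^{p+2}=\frac{p+2}{2}>0$, so $V\not\equiv0$; weak lower semicontinuity gives $\|\nabla V\|_{b,2}\le1$ and $\|V\|_2\le\|u_0\|_2$. The crux is now the sharp Gagliardo--Nirenberg inequality \eqref{d21}: in the mass-critical case $\textbf{p}_c=2(2-b)$ its exponents collapse to $\|V\|_{c,p+2}^{p+2}\le C_{GN}\|\nabla V\|_{b,2}^{2}\|V\|_2^{p}$ with $C_{GN}=\frac{p+2}{2\|Q\|_2^{p}}$. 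Feeding in $\|V\|_{c,p+2}^{p+2}=\frac{p+2}{2}$ and $\|\nabla V\|_{b,2}\le1$ forces $\|V\|_2\ge\|Q\|_2$.

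To obtain the concentration bound (1), I would undo the scaling: for any fixed $R>0$,
\[\int_{|x|\le\lambda(t_n)}|u(t_n)|^2\,dx=\int_{|y|\le\lambda(t_n)/\rho_n}|v_n|^2\,dy\ge\int_{|y|\le R}|v_n|^2\,dy\]
once $n$ is large, because the hypothesis $\lambda(t)\|\nabla u(t)\|_{b,2}^{2/(2-b)}\to\infty$ says exactly $\lambda(t_n)/\rho_n\to\infty$. Since $v_n\rightharpoonup V$ also weakly in $L^2$, lower semicontinuity of the $L^2(\{|y|\le R\})$-norm gives $\liminf_n\int_{|y|\le R}|v_n|^2\ge\int_{|y|\le R}|V|^2$, and letting $R\to\infty$ yields $\liminf_n\int_{|x|\le\lambda(t_n)}|u(t_n)|^2\ge\|V\|_2^2\ge\|Q\|_2^2$. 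As the sequence $t_n$ was arbitrary, a routine sequential-contradiction argument upgrades this to the full $\liminf_{t\to T^\ast}$ statement \eqref{c7}.

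Finally, part (2) follows from (1) by specializing the window. Suppose toward a contradiction that $u(t_n)\to u_\ast$ strongly in $L^2$ along some $t_n\to T^\ast$. Choosing the admissible $\lambda(t):=\|\nabla u(t)\|_{b,2}^{-\frac{1}{2-b}}$, which satisfies the hypothesis ($\lambda\|\nabla u\|_{b,2}^{2/(2-b)}=\|\nabla u\|_{b,2}^{1/(2-b)}\to\infty$) and tends to $0$ by \eqref{c8}, part (1) forces a fixed mass $\ge\|Q\|_2^2$ into the shrinking balls $\{|x|\le\lambda(t_n)\}$. On the other hand, strong $L^2$-convergence gives $|u(t_n)|^2\to|u_\ast|^2$ in $L^1$, so $\sup_A|\int_A(|u(t_n)|^2-|u_\ast|^2)|\to0$; combined with $\int_{|x|\le\lambda(t_n)}|u_\ast|^2\to0$ (absolute continuity, as $\lambda(t_n)\to0$) this yields $\int_{|x|\le\lambda(t_n)}|u(t_n)|^2\to0$, contradicting $\|Q\|_2>0$. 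The main obstacle throughout is the compactness input Lemma \ref{lem1}: because \eqref{a0} is not translation invariant one cannot localize mass at a moving center, and the whole argument hinges on the radial weighted embedding being compact so that the weak limit $V$ retains the full critical mass --- this is precisely the step where the sharp constant $C_{GN}$ and the identity $\textbf{p}_c=2(2-b)$ must conspire to produce the threshold $\|Q\|_2$.
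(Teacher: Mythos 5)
Your argument for part (1) is correct and is essentially the paper's own proof: the same renormalization $v_n=\rho_n^{n/2}u(t_n,\rho_n\cdot)$ with $\rho_n\sim\|\nabla u(t_n)\|_{b,2}^{-2/(2-b)}$ (the paper normalizes $\|\nabla v_n\|_{b,2}=\|\nabla Q\|_{b,2}$ rather than $1$, which is immaterial), the same use of the compact radial embedding of Lemma \ref{lem1} with $d=c$, $q=p+2$ to pass the nonlinear functional to the weak limit, the same application of the sharp mass-critical form of \eqref{d21} to force $\|V\|_2\geq\|Q\|_2$, and the same unscaling using $\lambda(t_n)/\rho_n\to\infty$. (You should, as the paper does, at least note that $p+2$ lies in the admissible range of $q$ in Lemma \ref{lem1}; this is exactly where the hypothesis $c<b-2+2n$ enters.) Where you genuinely diverge is part (2): you deduce it from part (1) by choosing the admissible window $\lambda(t)=\|\nabla u(t)\|_{b,2}^{-1/(2-b)}$, which shrinks to $0$ thanks to \eqref{c8}, and then playing the concentrated mass $\geq\|Q\|_2^2$ in vanishing balls against the tightness of a strongly $L^2$-convergent sequence. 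This is correct and arguably more conceptual, but it uses both part (1) and the lower bound \eqref{c8}. The paper instead proves (2) directly and independently of (1) and of \eqref{c8}: assuming $u(t_n)$ is $L^2$-Cauchy, it feeds the difference $u(t_n)-u(t_m)$ into \eqref{d21} and the energy identity to absorb $\|\nabla u(t_n)\|_{b,2}^2$ on the left, contradicting blow-up. The paper's route shows that the non-compactness statement needs no assumption on the blow-up speed, whereas yours buys a cleaner derivation at the cost of invoking \eqref{c8}; within the theorem as stated both are legitimate.
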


\begin{rem}
(1) It is possible to deduce the existence of $\lambda(t)$ such that $\lambda(t)\rightarrow0$, as $t\rightarrow T^\ast$, from the lower bound \eqref{c8} on the blow-up rate. For example, $\lambda(t)=(T^\ast-t)^\alpha$ with $0<\alpha<\frac{1}{2-b}$ satisfies the assumption of Theorem \ref{thm1.1}.

(2) In \cite{CC}, Campos and Cardoso showed a similar result for the INLS equation in the mass-critical case $(p=\frac{2(2+c)}{n})$. Theorem \ref{thm1.1} generalizes this result to the case $2-n<b\leq0$ for the dINLS setting.
\end{rem}

Next, we are devoted to the blow-up problem for the dINLS \eqref{a0} with low regularity initial data in $\dot{H}^{s_c}\cap \dot{W}_b^{1,2}$.
To our knowledge, Merle and Rapha\"{e}l \cite{MR} established a sufficient condition for the existence of radial finite time blow-up solutions to the NLS equation in $\dot{H}^{s_0}\cap \dot{H}^1$ with $s_0=\frac{n}{2}-\frac{2}{p}$, and provided a lower bound for the blow-up rate of its $\dot{H}^{s_0}$ norm.
In the recent work \cite{CF}, Cardoso and Farah assumed similar conditions with radial negative energy initial data and extended this results to the INLS setting. However, their proofs do not apply to the non-radial case. It is worth noting that the problem \eqref{a0} does not fall within the scope of \cite{CF, MR}.  As a matter of fact, the situation becomes much more delicate for our divergence case \eqref{a0}. It is unclear whether there exist blow-up solutions to the dINLS in $\dot{H}^{s_c}\cap \dot{W}_b^{1,2}$.

In the next theorem, we will remove the radiality assumption and present the existence of finite time blow-up solutions to the dINLS \eqref{a0} for non-positive energy initial data in $\dot{H}^{s_c}\cap \dot{W}_b^{1,2}$.
\begin{thm}\label{thm2}
Let $n\geq3,\ 2-n<b\leq0,\ b-2<c<\frac{nbp}{4},\ 2(2-b)<\textbf{p}_c<(2-b)(p+2),\ p<\frac{4}{n}$ and let $\sigma_0$ be defined as
\begin{equation}\label{f64}
\sigma_0=\frac{(n+\gamma)p}{2-b+c}
\end{equation}
with $-n<\gamma\leq0$. If $u_0\in \dot{H}^{s_c}\cap \dot{W}_b^{1,2}$ and $E(u_0)\leq0$, then the corresponding solution $u$ to \eqref{a0} blows up in finite time $0<T^\ast<\infty$.

\end{thm}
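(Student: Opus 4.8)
The plan is to prove that the solution cannot be global by means of a \emph{localized} virial identity adapted to the operator $\mathcal{A}_b=\nabla\cdot(|x|^b\nabla)$, combined with the two Gagliardo--Nirenberg inequalities in order to bypass the radiality hypothesis. The first difficulty is that in the intercritical regime $s_c>0$ the datum need not belong to $L^2$, so the classical second moment $\int_{\mR^n}|x|^2|u|^2\,dx$ may be infinite. I would therefore fix a radial weight $\phi_R(x)=R^2\phi(x/R)$, with $\phi$ smooth, $\phi(x)=|x|^2$ on $\{|x|\le1\}$ and $\phi$ together with its derivatives bounded on $\{|x|\ge2\}$, and set
\[V_R(t)=\int_{\mR^n}\phi_R(x)\,|u(x,t)|^2\,dx.\]
Since $\dot{H}^{s_c}\hookrightarrow L^2_{\mathrm{loc}}$ for $s_c\ge0$ and $\phi_R$ is bounded, $V_R(t)$ is finite, nonnegative, and (using the assumed local theory in $\dot{W}_b^{1,2}$, after a standard regularization) twice differentiable on $[0,T^\ast)$.

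Next I would differentiate twice in time. Writing $i\partial_tu=-\mathcal{A}_bu-|x|^c|u|^pu$ and integrating by parts against $\mathcal{A}_b$, the nonlinear contribution to $V_R'$ cancels because $\mathrm{Im}\int\phi_R|x|^c|u|^{p+2}\,dx=0$, leaving $V_R'(t)=2\,\mathrm{Im}\int_{\mR^n}|x|^b\,\bar u\,\nabla\phi_R\cdot\nabla u\,dx$. A second differentiation yields a localized virial identity of the schematic form
\[V_R''(t)\le c_1\|\nabla u\|_{b,2}^2-c_2\|u\|_{c,p+2}^{p+2}+\mathcal{E}_R(t),\]
where $c_1,c_2>0$ depend only on $b$ and $\textbf{p}_c$, the constants being matched exactly as in the model $\phi=|x|^2$ case so that the intercritical condition $2(2-b)<\textbf{p}_c$ forces $c_1-\tfrac{c_2(p+2)}{2}<0$; here $\mathcal{E}_R(t)$ collects all the terms supported on $\{|x|\ge R\}$ produced by the derivatives of $\phi_R$. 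Substituting the energy identity \eqref{c10} in the form $\|u\|_{c,p+2}^{p+2}=(p+2)\big(\tfrac12\|\nabla u\|_{b,2}^2-E(u_0)\big)$ transforms the leading part into
\[\Big(c_1-\tfrac{c_2(p+2)}{2}\Big)\|\nabla u\|_{b,2}^2+c_2(p+2)E(u_0)+\mathcal{E}_R(t),\]
so that, under the hypothesis $E(u_0)\le0$, both of the first two terms are nonpositive and the leading coefficient is strictly negative.

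The heart of the matter, and the step that removes radiality, is the control of the error $\mathcal{E}_R(t)$, whose terms are of two types. The kinetic terms are pointwise dominated by $\int_{R\le|x|\le 2R}|x|^b|\nabla u|^2\,dx$ and are absorbed into a small multiple of $\|\nabla u\|_{b,2}^2$. The nonlinear terms are dominated by $\int_{|x|\ge R}|x|^c|u|^{p+2}\,dx$, and for these I would use the two Gagliardo--Nirenberg inequalities of the paper: the $\dot{W}_b^{1,2}$-type inequality \eqref{d21} in its non-radial form, together with the weighted inequality carrying the weight $|x|^\gamma$ and the exponent $\sigma_0=\frac{(n+\gamma)p}{2-b+c}$ from \eqref{f64} with $-n<\gamma\le0$. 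Splitting $|x|^c=|x|^{c-(p+2)\gamma/\sigma_0}\,|x|^{(p+2)\gamma/\sigma_0}$ and estimating the first factor by $R^{\,c-(p+2)\gamma/\sigma_0}$ on $\{|x|\ge R\}$, one extracts a strictly negative power of $R$ and bounds the remaining weighted integral through the $\dot{H}^{s_c}$ and $\dot{W}_b^{1,2}$ norms. It is precisely here that the admissibility of the interpolation, namely nonnegativity and subcriticality of the exponents together with strict negativity of the $R$-power, forces the stated ranges $b-2<c<\frac{nbp}{4}$, $p<\frac4n$, and $-n<\gamma\le0$.

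Finally, choosing $R$ large one arranges $\mathcal{E}_R(t)\le\frac12\big|c_1-\tfrac{c_2(p+2)}{2}\big|\,\|\nabla u\|_{b,2}^2$ after absorption, whence $V_R''(t)\le-\delta<0$ holds uniformly on $[0,T^\ast)$; integrating twice and using $V_R\ge0$ yields a contradiction unless $T^\ast<\infty$, which proves finite-time blow-up. The principal obstacle is exactly the non-radial estimate of $\mathcal{E}_R$: one must match the interpolation exponents in the weighted Gagliardo--Nirenberg inequality so that the localization produces a genuinely negative power of $R$ while the leading virial coefficient keeps the correct sign, and one must ensure that the surviving critical factor does not destroy the uniform bound. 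This balance is delicate precisely because, in contrast with the mass-subcritical regime, the critical norm $\|u\|_{\dot{H}^{s_c}}$ is not a priori controlled along the flow, so the interpolation must be arranged to leave only harmless powers of the conserved energy and of the $\dot{W}_b^{1,2}$ norm it already dominates.
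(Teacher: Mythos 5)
There is a genuine gap, and it is located exactly at the step you yourself flag as "the heart of the matter": the absorption of the error $\mathcal{E}_R(t)$ for a \emph{fixed} radius $R$, uniformly in $t\in[0,T^\ast)$. The localized virial identity (the paper's Lemma \ref{lem5.2}, estimate \eqref{f4}) produces two error terms: $\frac{1}{R^{2-b}}\int_{2R\leq|x|\leq4R}|u(t)|^2dx$ and $\int_{|x|\geq R}|x|^c|u(t)|^{p+2}dx$. In the $H^1$ (or radial, or INLS) setting these are harmless because the first is bounded by $R^{-(2-b)}M(u_0)$ via mass conservation. Here $u_0\in\dot{H}^{s_c}\cap\dot{W}_b^{1,2}$ with $s_c>0$ is not in $L^2$, and the best available bounds (Lemma \ref{lem5.1}) control both error terms by negative powers of $R$ times powers of $\rho(u(t),R)$ or $\|u(t)\|_{\gamma,\sigma_0}$ --- quantities that are \emph{not} conserved and not dominated by $E(u_0)$ or by a small multiple of $\|\nabla u(t)\|_{b,2}^2$; indeed Theorem \ref{thm3} shows $\|u(t)\|_{\gamma,\sigma_0}$ must diverge at a blow-up time. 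Your proposed splitting of the weight $|x|^c$ and interpolation against $\dot{H}^{s_c}$ runs into the same obstruction, since $\|u(t)\|_{\dot{H}^{s_c}}$ is likewise uncontrolled along the flow. Consequently the uniform bound $\mathcal{E}_R(t)\leq\frac12|c_1-\tfrac{c_2(p+2)}{2}|\,\|\nabla u\|_{b,2}^2$ cannot be arranged by taking $R$ large once and for all, the concavity conclusion $V_R''\leq-\delta<0$ does not follow, and the double-integration contradiction collapses.

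The paper circumvents this with a Merle--Rapha\"{e}l-type scheme rather than a convexity argument. The radius is taken time-dependent and parabolic, $R=A\tau_0^{\frac{1}{2-b}}$; the quantity $M_\infty^2(A,\tau_0)=\max_{\tau\le\tau_0}\rho(u(\tau),A\tau^{\frac{1}{2-b}})$ built from the Morrey--Campanato functional \eqref{f26} is controlled by a bootstrap (Proposition \ref{pro4.1}, whose hypotheses \eqref{f20}--\eqref{f21} are verified from $E(u_0)\leq0$ via the sharp Gagliardo--Nirenberg inequality of Lemma \ref{pro1}); the twice-integrated virial inequality then yields the space-time dispersive bound \eqref{f23}, not a pointwise concavity statement. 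Finally the contradiction is obtained in a different way: \eqref{f23} produces times $\tau_n$ with $\lambda_u(\tau_n)\gtrsim\sqrt{\tau_n}\to\infty$, Proposition \ref{pro4.2} forces the fixed lower bound \eqref{f44} on $\lambda_u^{-\frac{4s_c}{2-b}}\int_{|x|\leq D_\ast\lambda_u^{2/(2-b)}}|u_0|^2dx$, and this contradicts the vanishing \eqref{f3} of $R^{-2s_c}\int_{|x|\leq R}|u_0|^2dx$ as $R\to\infty$. Your outline contains none of the bootstrap, the time-dependent localization, or the concentration-versus-decay endgame, and without them the argument does not close.
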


We point out that there are two major challenges in the proof of Theorem \ref{thm2}. One challenge is that we the mass concentration law does not work due to the initial data condition $u_0\in \dot{H}^{s_c}\cap \dot{W}_b^{1,2}$. To overcome it, we will follow the approach introduced by Merle and Rapha\"{e}l \cite{MR} and replace the role of the $L^2$ norm by a suitable invariant Morrey-Campanato norm, see the definition in \eqref{f26} below. But since the emergence of unbounded variable coefficients $|x|^b$ and $|x|^c$ in the dINLS case, we mention that the standard Sobolev space theory are not useful for the argument of the existence of blow-up solutions, which is another delicate problem. Thus, we must search for some new tools to deal with the nonlinearity, and a finer analysis is required to control the energy of the variable coefficient presented in the linear part of \eqref{a0}. To do it, one key ingredient in the proof of Theorem \ref{thm2} is the following sharp
Gagliardo-Nirenberg inequality
\begin{equation}\nonumber
\int|x|^c|f|^{p+2}dx\leq \frac{p+2}{2\|\mathcal{Q}\|_{\gamma,\sigma_0}^p}\|\nabla f\|_{b,2}^2\|f\|_{\gamma,\sigma_0}^p,
\end{equation}
where $\mathcal{Q}$ is a solution to the elliptic equation
\begin{equation}\nonumber
\nabla\cdot(|x|^b\nabla \mathcal{Q})+|x|^c|\mathcal{Q}|^{p}\mathcal{Q}-|x|^\gamma|\mathcal{Q}|^{\sigma_0-2}\mathcal{Q}=0
\end{equation}
and $\sigma_0$ is defined as in \eqref{f64}.
We also note that this Gagliardo-Nirenberg inequality guarantees that the quantity $E(u(t))$ is well-defined for functions in $\dot{W}_b^{1,2}\cap L^{\sigma_0}(|x|^\gamma dx)$.
Moreover, we will show that the following Sobolev embedding
\[\dot{H}^{s_c}\hookrightarrow L^{\sigma_0}(|x|^\gamma dx)\]
holds for all  $-n<\gamma\leq0$ and $0<s_c<\frac{2-b}{2}$. In virtue of this interpolation inequality and compactness argument, the analysis of the blow-up solutions to the dINLS \eqref{a0} in $\dot{H}^{s_c}\cap \dot{W}_b^{1,2}$ is now reduced to some crucial uniform estimates, which occupy the main part of Section 5 and strongly benefit from a localized virial estimate adapted to our dINLS setting.

The last result describes the behavior of the $\dot{H}^{s_c}$-norm for any finite time blow-up solution to the dINLS \eqref{a0} with initial data in $\dot{H}^{s_c}\cap \dot{W}_b^{1,2}$. Our result recovers and extends the ones in previous works (\cite{CF}, \cite{MR}).
\begin{thm}\label{thm3}
Let $b,\ c,\ p,\ \textbf{p}_c$ and $\sigma_0$ be as in Theorem \ref{thm2}.
If $u_0\in \dot{H}^{s_c}\cap \dot{W}_b^{1,2}$ is such that the corresponding solution to \eqref{a0} blows up in finite time $0<T^\ast<+\infty$ and satisfies
\begin{equation}\label{f92}
\|\nabla u(t)\|_{b,2}\geq\frac{C}{(T^\ast-t)^{\frac{2-b-2s_c}{2(2-b)}}}
\end{equation}
for some constant $C$ and $t$ close enough to $T^\ast$, then there exists a constant $l=l(n, b, c, p, \gamma)>0$ such that
\begin{equation}\label{f75}
C\|u(t)\|_{\dot{H}^{s_c}}\geq\|u(t)\|_{\gamma, \sigma_0}\geq|\log(T^\ast-t)|^l.
\end{equation}
\end{thm}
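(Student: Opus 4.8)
The plan is to separate \eqref{f75} into its two inequalities, of which only the right one is substantial. The left inequality $C\|u(t)\|_{\dot H^{s_c}}\ge\|u(t)\|_{\gamma,\sigma_0}$ is exactly the critical Sobolev embedding $\dot H^{s_c}\hookrightarrow L^{\sigma_0}(|x|^\gamma\,dx)$ recorded before Theorem~\ref{thm2}; it is available because the intercritical hypothesis $2(2-b)<\textbf{p}_c<(2-b)(p+2)$ forces $0<s_c<\frac{2-b}{2}$ and because $-n<\gamma\le 0$. It therefore remains to prove the logarithmic lower bound on $\|u(t)\|_{\gamma,\sigma_0}$, and for this I would run the Merle--Rapha\"{e}l renormalization scheme, letting the scale-invariant norm $\|\cdot\|_{\gamma,\sigma_0}$ play the role that the conserved mass plays for the $L^2$-critical problem, a substitution forced on us since $u(t)\in\dot H^{s_c}$ with $s_c>0$ carries no $L^2$ information.

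First I would fix the geometric scale
\[
\lambda(t):=\|\nabla u(t)\|_{b,2}^{-\frac{2}{2-b-2s_c}},
\]
which is the exponent under which the weighted gradient transforms: the scaling $u_\lambda=\lambda^{(2-b+c)/p}u(\lambda\cdot,\lambda^{2-b}\cdot)$ gives $\|\nabla u_\lambda\|_{b,2}=\lambda^{(2-b-2s_c)/2}\|\nabla u\|_{b,2}$, so the renormalized profile $v(s,y)=\lambda(t)^{(2-b+c)/p}u(t,\lambda(t)y)$ has $\|\nabla v\|_{b,2}\equiv 1$. Introducing the renormalized time $s(t)=\int_0^t\lambda(\tau)^{-(2-b)}\,d\tau$, the rate hypothesis \eqref{f92} reads $\lambda(t)\le C(T^\ast-t)^{1/(2-b)}$, whence $\lambda(\tau)^{-(2-b)}\ge c\,(T^\ast-\tau)^{-1}$ and, for $t_1$ beyond which \eqref{f92} holds,
\[
s(t)\ \ge\ c\int_{t_1}^{t}\frac{d\tau}{T^\ast-\tau}\ \gtrsim\ \bigl|\log(T^\ast-t)\bigr|.
\]
This is the only place the bound \eqref{f92} is used, and it converts the target logarithm in physical time into a linear lower bound in renormalized time. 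Since $\|\cdot\|_{\gamma,\sigma_0}$ is scale invariant, $\|u(t)\|_{\gamma,\sigma_0}=\|v(s)\|_{\gamma,\sigma_0}$, and the theorem reduces to the growth estimate $\|v(s)\|_{\gamma,\sigma_0}\gtrsim s^{\,l}$.

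To manufacture that growth I would run a localized virial/commutator computation for the renormalized flow, cut off at the bubble scale and adapted to $\mathcal A_b$, whose goal is the differential inequality
\[
\frac{d}{ds}\,\|v(s)\|_{\gamma,\sigma_0}^{\sigma_0}\ \ge\ c_0\ >\ 0
\]
valid modulo error terms decaying in $s$. The strict positivity of $c_0$ is where the rate hypothesis pays off a second time: with $\|\nabla v\|_{b,2}\equiv 1$ and the rescaled energy $E(v(s))=\lambda(t)^{2-b-2s_c}E(u_0)\to 0$ as $s\to\infty$ (the exponent $2-b-2s_c>0$ being the energy-subcriticality), the identity \eqref{c10} pins the nonlinearity down to $\|v(s)\|_{c,p+2}^{p+2}\to\frac{p+2}{2}$, so the virial quantity tends to a definite limit; its positivity, which yields $c_0>0$, is ensured by the parameter constraints discussed below. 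This is also why, in contrast to Theorem~\ref{thm2}, no sign condition on $E(u_0)$ is needed here. The sharp Gagliardo--Nirenberg inequality stated before Theorem~\ref{thm2} (with constant $\|\mathcal Q\|_{\gamma,\sigma_0}$) and a local compactness argument (cf. Lemma~\ref{lem1}) then serve to relate the virial quantity to $\|v\|_{\gamma,\sigma_0}$ and to absorb the localization errors. Integrating gives $\|v(s)\|_{\gamma,\sigma_0}^{\sigma_0}\gtrsim s$, hence $\|v(s)\|_{\gamma,\sigma_0}\gtrsim s^{1/\sigma_0}$; together with $s\gtrsim|\log(T^\ast-t)|$ and scale invariance this is \eqref{f75}, with $l$ of the order $1/\sigma_0$. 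As a sanity check, $v(s)$ heuristically resolves a self-similar profile of critical decay out to radius $\sim e^{cs}$, for which $|y|^\gamma|v|^{\sigma_0}\sim|y|^{-n}$, so $\int_{|y|\le e^{cs}}|y|^\gamma|v|^{\sigma_0}\,dy\sim s$, consistent with this power.

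I expect the main obstacle to be establishing the displayed differential inequality in the divergence setting with the correct sign and controllable errors. The unbounded weights $|x|^b$ and $|x|^c$ force every cutoff to live at the moving scale $\lambda(t)$ and generate boundary and commutator terms that must be reabsorbed through the Gagliardo--Nirenberg inequality; I anticipate the constraints $b-2<c<\frac{nbp}{4}$ and $p<\frac4n$ to be exactly what makes the leading virial term have the right sign and forces the errors to decay in $s$ fast enough not to spoil the linear growth of $\|v(s)\|_{\gamma,\sigma_0}^{\sigma_0}$. A secondary difficulty is the drift $\frac{\lambda'}{\lambda}$ produced by the time-dependent renormalization, which in the classical scheme is tamed by modulation theory; since only a lower bound is sought here, I expect crude monotonicity estimates, rather than a full modulation or soliton-resolution analysis, to suffice.
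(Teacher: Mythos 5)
Your setup is on target: the renormalization at scale $\lambda_u(t)=\|\nabla u(t)\|_{b,2}^{-\frac{2-b}{2-b-2s_c}}$, the use of \eqref{f92} to convert $|\log(T^\ast-t)|$ into the number of available renormalized scales, the scale invariance of $\|\cdot\|_{\gamma,\sigma_0}$, and the Sobolev embedding $\dot H^{s_c}\hookrightarrow L^{\sigma_0}(|x|^\gamma dx)$ for the left inequality of \eqref{f75} all match the paper. The gap is in the core mechanism. You propose to prove $\|v(s)\|_{\gamma,\sigma_0}\gtrsim s^{l}$ by establishing a monotonicity formula $\frac{d}{ds}\|v(s)\|_{\gamma,\sigma_0}^{\sigma_0}\geq c_0>0$ from a localized virial computation. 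No such identity is available: the virial machinery (here \eqref{b26}--\eqref{b11}, Lemma \ref{lem5.2}) computes derivatives of $L^2$-bilinear quantities $\int\psi|u|^2$, and the norm $\|v\|_{\gamma,\sigma_0}^{\sigma_0}=\int|x|^\gamma|v|^{\sigma_0}$ with $\sigma_0\neq2$ has no comparable structure under the Schr\"odinger flow -- its time derivative produces terms like $\mathrm{Im}\int|x|^{\gamma+b}|v|^{\sigma_0-2}\bar v\,\nabla v\cdot\nabla(\cdot)$ with no sign and no closure. Even the heuristic you offer (a self-similar tail with $|y|^\gamma|v|^{\sigma_0}\sim|y|^{-n}$ out to radius $e^{cs}$) describes the \emph{outcome} of the correct argument, not a pointwise-in-$s$ derivative bound; the critical norm is precisely the quantity for which no coercive evolution identity exists, which is why Merle--Rapha\"el (and this paper) avoid differentiating it.

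What the paper actually does is a counting argument over exponentially separated scales. For each $i\in[\sqrt{N(t)},N(t)]$ with $N(t)=-\log\lambda_u(t)\gtrsim|\log(T^\ast-t)|$, Step 3 of the proof produces (by a continuity/contradiction argument using the dispersive bound \eqref{f23} of Proposition \ref{pro4.1}) a time $\tau_i\in[0,e^i]$ at which the rescaled solution has controlled concentration scale $\lambda_v(\tau_i)\sim e^{i/2}/L(t)$. Proposition \ref{pro4.2} -- whose content is that a definite quantum of the scale-invariant local $L^2$ functional $\rho$ of \eqref{f26} must already be present in the \emph{initial} data of the rescaled flow, transported backward via virial control of $\int\Theta_{\widetilde R}|u|^2$ -- then forces $\lambda_v^{-4s_c/(2-b)}(\tau_i)\int_{|x|\le M^{\alpha_4}\lambda_v^{2/(2-b)}(\tau_i)}|v(0)|^2\,dx\geq C_2$ on each such scale. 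By H\"older each annulus $\mathcal C_i$ contributes at least $C_4M(t)^{-\alpha_4 s_c\sigma_0}$ to $\int_{\mathcal C_i}|x|^\gamma|v(0)|^{\sigma_0}dx$, and since the $\mathcal C_i$ are pairwise disjoint for indices spaced by $p(t)\sim\log M(t)$, summing $\gtrsim\sqrt{N(t)}/p(t)$ of them yields $\|v(0)\|_{\gamma,\sigma_0}^{\sigma_0+\alpha_4s_c\sigma_0}\gtrsim\sqrt{N(t)}$, whence \eqref{f75}. This accumulation-over-disjoint-annuli step, together with the backward-in-time transport of local mass in Proposition \ref{pro4.2}, is the idea missing from your proposal; without it the logarithmic growth cannot be extracted.
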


The scaling power in the lower bound \eqref{f92} is very natural and it is easily deduced if a local Cauchy theory in $\dot{W}_b^{s,2}$ is available (see \cite{AT} for the INLS case). In fact, let $u_0\in \dot{W}_b^{s,2}$ be such that $T^\ast<\infty$ and $u\in C([0, T^\ast);\ \dot{W}_b^{s,2})$ be the maximal solution to \eqref{a0}. For a fixed $t\in [0, T^\ast)$, we consider the following scaling of $u$ as
\[v^{(t)}(x,\tau)=\lambda(t)^{\frac{2(2-b+c)}{(2-b)p}}u(\lambda^{\frac{2}{2-b}}(t)x, t+\lambda^2(t)\tau)\]
with $\lambda(t)^{\frac{2(s-s_c)-b}{2-b}}\|u\|_{\dot{W}_b^{s,2}}=1$. So we get $\|v^{(t)}(0)\|_{\dot{W}_b^{s,2}}=1.$
From the local theory in $\dot{W}_b^{s,2}$, there exists a $\tau_0>0$, independent of $t$, such that $v^{(t)}$ is defined on $[0, \tau_0]$. Then $t+\lambda^2(t)\tau_0<T^\ast$, which arrives at \eqref{f92} by choosing $s=1$. In our work,
due to the lack of the local Cauchy theory in $\dot{W}^{s,2}_{b}$ for the dINLS model, we include this assumption on the statement of Theorems \ref{thm1.1} and \ref{thm3}.

This paper is structured as follows. In Section 2, we introduce some notations and collect some technical tools which we will need along the proof. Section 3 is devoted to the proof of two dynamical properties of the radial blow-up solutions in $W_b^{1,2}$, including Theorems \ref{thm1.3} and \ref{thm1.1}. In Sections 4 and 5, we establish two types of generalized Gagliardo-Nirenberg inequalities, and use them to deduce the uniform estimates in Propositions \ref{pro4.1} and \ref{pro4.2}, which are useful in the proof of Theorem \ref{thm2}. In Section 6, we give the proof of Theorems \ref{thm2} and \ref{thm3}.

\section{Preliminary}
We start this section by introducing the notation used throughout the paper. We use $C_\alpha$ to denote various positive constants that depend on $\alpha$ and may vary line by line. All the integrals will be understood to be over $\Bbb{R}^n$, except as specified.
We use $L^q(\Bbb{R}^n; \omega(x)dx)$ to denote the weighted Lebesgue space, which is defined via
\[\|u\|_{L^q(\Bbb{R}^n; \omega(x)dx)}=(\int\omega(x)|u|^qdx)^{\frac{1}{q}}.\]
We make the usual modification when $q$ equals to $\infty$. We often abbreviate the space $L^q(\Bbb{R}^n; \omega(x)dx)$ by $L^q(\omega(x)dx)$, the norm $\|\cdot\|_{L^q(\Bbb{R}^n; |x|^adx)}$ by $\|\cdot\|_{a,q}$ in case $\omega(x)=|x|^a$. In particular, we denote $\|\cdot\|_{q}=\|\cdot\|_{0,q}$.

Given $a,\ s\in\Bbb{R},\ 1\leq q<\infty$, we also define the inhomogeneous (resp. homogeneous) weighted Sobolev space $W_a^{s,q}(\Bbb{R}^n)$ (resp. $\dot{W}_{a}^{s,q}(\Bbb{R}^n)$) via the norms
\[\|u\|_{W_{a}^{s,q}}=\|(-\Delta)^{\frac{s}{2}} u\|_{a,q}+\|u\|_{q}\quad\mbox{and}\quad  \|u\|_{\dot{W}_{a}^{s,q}}=\|(-\Delta)^{\frac{s}{2}} u\|_{a,q}.\]
To shorten formulas, we often omit $\Bbb{R}^n$. In particular, if $a=0$, $W^{s, p}=W_0^{s, p}$ and $\dot{W}^{s, p}=\dot{W}_{0}^{s,p},$ where $W^{s, p}$ (resp. $\dot{W}^{s, p}$) are the standard inhomogeneous (resp. homogeneous) Sobolev spaces.
As usual, $H^s=W_0^{s,2}$ and $\dot{H}^s=\dot{W}_{0}^{s,2}.$

\subsection{Weighted Sobolev embedding}
In this section, we mainly prove a compact embedding result in Lemma \ref{lem4} below for general non-radial functions in $\dot{W}_b^{1,2}\cap L^{\sigma_0}(|x|^\gamma dx)$.
To proceed it, we first recall a weighted Sobolev embedding lemma in an open bounded set $\Omega$, which appeared in \cite{GGW}.
\begin{lemma}\label{lem3}
Let $n\geq3$ and $\Omega\subset\Bbb{R}^n$ be an open bounded set. Assume that $b>2-n$ and $b-2<c\leq\frac{nb}{n-2}$. Then the embedding
\[\dot{W}_b^{1,2}(\Omega)\hookrightarrow L^q(\Omega;\ |x|^cdx),\]
is continuous provided $q\in[1, \frac{2n+2c}{n-2+b}]$ and this embedding is compact for $q\in[1, \frac{2n+2c}{n-2+b})$.
\end{lemma}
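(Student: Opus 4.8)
The plan is to split the statement into the continuous embedding and the compactness, and to treat the two mechanisms that drive them separately: the scaling/Caffarelli-Kohn-Nirenberg effect at the critical exponent $q^\ast:=\frac{2n+2c}{n-2+b}$, and the concentration of the weighted measure at the origin.

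For the continuous embedding I would first record that the scaling $u(x)\mapsto u(\lambda x)$ forces the invariant exponent to be exactly $q^\ast$, that the constraint $c\le\frac{nb}{n-2}$ is equivalent to $q^\ast\le 2^\ast:=\frac{2n}{n-2}$, and that $c>b-2$ gives $q^\ast>2$ while, together with $b>2-n$, it yields $c>-n$ so that $\int_\Omega|x|^c\,dx<\infty$. The heart of the continuous part is the critical Caffarelli-Kohn-Nirenberg inequality $\|u\|_{c,q^\ast}\le C\|\nabla u\|_{b,2}$, valid precisely under these admissibility conditions, which I would invoke as a known inequality. The whole subcritical range $q\in[1,q^\ast)$ then follows on the bounded set $\Omega$ by H\"{o}lder with exponents $q^\ast/q$ and $q^\ast/(q^\ast-q)$,
\[\int_\Omega|x|^c|u|^q\,dx\le\Big(\int_\Omega|x|^c|u|^{q^\ast}\,dx\Big)^{q/q^\ast}\Big(\int_\Omega|x|^c\,dx\Big)^{(q^\ast-q)/q^\ast},\]
since the weight splits consistently and $\int_\Omega|x|^c\,dx<\infty$.

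For compactness, let $\{u_k\}$ be bounded in $\dot W_b^{1,2}(\Omega)$, say $\|\nabla u_k\|_{b,2}\le M$, so that $\|u_k\|_{c,q^\ast}\le CM$ by the critical inequality. I would argue by a cutoff at the origin together with an $\varepsilon/3$ argument. Away from the origin, on each $\Omega_\delta:=\Omega\cap\{|x|>\delta\}$ the weights $|x|^b,|x|^c$ are comparable to positive constants; combining the gradient bound with the uniform $L^{q^\ast}$ bound gives an $H^1(\Omega_\delta)$ bound, and since $q<q^\ast\le 2^\ast$ the classical Rellich-Kondrachov theorem yields a subsequence converging in $L^q(\Omega_\delta;|x|^c\,dx)$; a diagonal extraction over $\delta=1/m$ produces one subsequence converging on every $\Omega_{1/m}$. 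Near the origin I would establish tightness: by the same H\"{o}lder splitting restricted to $B_\delta=\{|x|<\delta\}$,
\[\int_{B_\delta}|x|^c|u_k|^q\,dx\le\Big(\int|x|^c|u_k|^{q^\ast}\,dx\Big)^{q/q^\ast}\Big(\int_{B_\delta}|x|^c\,dx\Big)^{(q^\ast-q)/q^\ast}\le (CM)^q\,\big(c_n\,\delta^{\,n+c}\big)^{(q^\ast-q)/q^\ast},\]
which tends to $0$ as $\delta\to0$ uniformly in $k$ because $n+c>0$ and, crucially, $q<q^\ast$ makes the exponent $(q^\ast-q)/q^\ast$ strictly positive. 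Making the near-origin mass uniformly small and using convergence on $\Omega_\delta$ then shows the extracted subsequence is Cauchy in $L^q(\Omega;|x|^c\,dx)$.

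The hard part is the compactness, and within it the genuinely delicate point is the tightness at the origin: it is exactly the strict subcriticality $q<q^\ast$ that converts the vanishing weighted volume $\int_{B_\delta}|x|^c\,dx\sim\delta^{n+c}$ into uniform smallness, and this is why compactness fails at $q=q^\ast$, where the $\delta$-factor has exponent $0$. A secondary technical issue is the possible irregularity of $\Omega$ in the away-from-origin step; I would handle it by working with the $C_c^\infty(\Omega)$-closure and extending by zero, so that Rellich-Kondrachov is applied on balls and annuli rather than on $\Omega$ itself. The one external input I take for granted is the critical Caffarelli-Kohn-Nirenberg inequality, whose admissibility range coincides with the hypotheses $b>2-n$ and $b-2<c\le\frac{nb}{n-2}$.
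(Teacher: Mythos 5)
The paper does not actually prove this lemma: it is imported verbatim from the reference \cite{GGW}, so there is no in-paper argument to compare yours against line by line. Judged on its own, your proof is correct and is essentially the standard route one would expect the cited source to follow. The continuity part is sound: your parameter checks ($q^\ast>2$ from $c>b-2$, $q^\ast\le 2^\ast$ from $c\le\frac{nb}{n-2}$, $c>-n$ from $c>b-2>-n$) are all right, the critical inequality $\|u\|_{c,q^\ast}\le C\|\nabla u\|_{b,2}$ is exactly the Hardy--Sobolev inequality \eqref{c26} with $d=-c/q^\ast$ (and one can verify that the admissible range $-\frac b2\le d\le\frac{2-b}{2}$ translates precisely into $b-2\le c\le\frac{nb}{n-2}$, as you assert), and the H\"older step with the weight split $|x|^c=|x|^{cq/q^\ast}\cdot|x|^{c(q^\ast-q)/q^\ast}$ handles the whole subcritical range on the bounded set. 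The compactness argument — uniform tightness at the origin from the vanishing weighted volume $\int_{B_\delta}|x|^c\,dx\sim\delta^{n+c}$ raised to the strictly positive power $(q^\ast-q)/q^\ast$, combined with Rellich--Kondrachov away from the origin where the weights are two-sided bounded, plus a diagonal extraction — is complete, and you correctly identify that it is exactly the strict inequality $q<q^\ast$ that makes the origin harmless. The one genuine loose end, which you yourself flag, is the meaning of $\dot W_b^{1,2}(\Omega)$ on a general bounded open set: without boundary regularity the Rellich step needs either the $C_c^\infty(\Omega)$-closure (so that extension by zero lands in $H^1$ of a ball) or some hypothesis on $\partial\Omega$; since the paper never specifies the space on a domain and only ever applies the lemma to annuli $B_R\setminus B_r$, your resolution is adequate, but in a self-contained write-up you should state explicitly which definition you adopt.
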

In the case $\Omega=\Bbb{R}^n$, we also recall the following celebrated Caffarelli-Kohn-Nirenberg inequality \cite{CKN}.
\begin{lemma}\label{lem3.3}
For $b>-n$, there exists $C>0$ such that the inequality
\begin{equation}\label{d5}
(\int_{\Bbb{R}^n}||x|^\alpha f|^{q+2}dx)^{\frac{1}{q+2}}\leq C(\int_{\Bbb{R}^n}||x|^{\frac{b}{2}}\nabla f|^2dx)^{\frac{a}{2}}(\int_{\Bbb{R}^n}||x|^\beta f|^ldx)^{\frac{1-a}{l}}
\end{equation}
holds if and only if the following relation
\begin{equation}\nonumber
\frac{1}{q+2}+\frac{\alpha}{n}=a\left(\frac{1}{2}+\frac{b-2}{2n}\right)+(1-a)\left(\frac{1}{l}+\frac{\beta}{n}\right)
\end{equation}
holds, where $\frac{1}{q+2}+\frac{\alpha}{n}>0,\ \frac{1}{l}+\frac{\beta}{n}>0$, $\alpha\leq(1-a)\beta$ with $0\leq a\leq1$.
\end{lemma}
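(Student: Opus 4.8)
The plan is to prove the two implications separately: the necessity of the balance relation follows from a one-parameter dilation argument, while the sufficiency is obtained by reducing the full interpolation inequality to its pure-gradient endpoint (the case $a=1$) and then recovering the general case through a weighted H\"older interpolation. The genuinely analytic step, and the one I expect to be the main obstacle, is the endpoint weighted Sobolev inequality itself, which is the substantive content of \cite{CKN}.

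\textbf{Necessity.} Suppose \eqref{d5} holds with a constant $C$ independent of $f$. Applying it to the dilate $f_\lambda(x):=f(\lambda x)$, $\lambda>0$, and changing variables $y=\lambda x$ in each integral, one computes
\[\||x|^\alpha f_\lambda\|_{q+2}=\lambda^{-\alpha-\frac{n}{q+2}}\||x|^\alpha f\|_{q+2},\qquad \||x|^{\frac b2}\nabla f_\lambda\|_2=\lambda^{\frac{2-b-n}{2}}\||x|^{\frac b2}\nabla f\|_2,\]
and $\||x|^\beta f_\lambda\|_l=\lambda^{-\beta-\frac nl}\||x|^\beta f\|_l$. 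Substituting into \eqref{d5} forces the $\lambda$-exponents on the two sides to coincide, i.e. $-\alpha-\frac{n}{q+2}=a\frac{2-b-n}{2}-(1-a)\big(\beta+\frac nl\big)$; dividing by $-n$ and regrouping yields exactly the stated relation, since $\frac{n+b-2}{2n}=\frac12+\frac{b-2}{2n}$. The side conditions $\frac1{q+2}+\frac\alpha n>0$ and $\frac1l+\frac\beta n>0$ guarantee that the weighted norms are finite on $C_c^\infty(\mathbb{R}^n\setminus\{0\})$, so the dilation argument is not vacuous.

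\textbf{Sufficiency via interpolation.} Assume the relation together with the constraints $0\le a\le1$ and $\alpha\le(1-a)\beta$. I would introduce an intermediate exponent $r_1$ and weight $\alpha_1$ by
\[\frac{1}{q+2}=\frac{a}{r_1}+\frac{1-a}{l},\qquad \alpha=a\alpha_1+(1-a)\beta.\]
A direct substitution into the balance relation shows that $(\alpha_1,r_1)$ satisfies the same relation with $a=1$, namely $\frac1{r_1}+\frac{\alpha_1}{n}=\frac12+\frac{b-2}{2n}$, and the constraint $\alpha\le(1-a)\beta$ forces $\alpha_1\le0$, so the pair is admissible for the endpoint. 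Writing $|x|^\alpha|f|=(|x|^{\alpha_1}|f|)^a(|x|^\beta|f|)^{1-a}$ pointwise and applying H\"older's inequality with the conjugate exponents $\frac{r_1}{a(q+2)}$ and $\frac{l}{(1-a)(q+2)}$ (which are conjugate precisely because of the first identity above), I obtain
\[\||x|^\alpha f\|_{q+2}\le\||x|^{\alpha_1}f\|_{r_1}^{a}\,\||x|^\beta f\|_l^{1-a}.\]
Thus \eqref{d5} is reduced to the single endpoint inequality $\||x|^{\alpha_1}f\|_{r_1}\le C\||x|^{b/2}\nabla f\|_2$.

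\textbf{The endpoint (main obstacle).} It remains to establish the weighted Sobolev inequality at $a=1$. For $f\in C_c^\infty(\mathbb{R}^n\setminus\{0\})$ I would first treat radial (or radially symmetrizable) $f$: writing $f(\rho\omega)=-\int_\rho^\infty\partial_r f(r\omega)\,dr$ in polar coordinates and combining a one-dimensional weighted Hardy inequality in the radial variable with integration over $S^{n-1}$ reduces matters to a Bliss-type ODE optimization, which delivers the inequality with the sharp constant; the admissibility $b>-n$, $\alpha_1\le0$ and the $a=1$ balance are exactly the hypotheses under which the radial Hardy estimate closes. The genuine difficulty is that this radial representation discards the tangential part of the gradient, so extending the bound to arbitrary non-radial $f$ requires controlling the angular derivatives — either via the weighted rearrangement results that certify radial extremizers in the admissible parameter range, or via the original argument of Caffarelli, Kohn and Nirenberg \cite{CKN}. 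I expect this to be the crux of the proof; once it is in hand, density of $C_c^\infty(\mathbb{R}^n\setminus\{0\})$ in $\dot{W}_b^{1,2}$ extends the endpoint to the whole space, and the interpolation above then yields \eqref{d5} in full.
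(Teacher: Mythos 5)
The first thing to note is that the paper does not prove this lemma at all: Lemma \ref{lem3.3} is simply recalled from Caffarelli--Kohn--Nirenberg \cite{CKN}, so the only benchmark is the original theorem. Your necessity argument by dilation is correct and standard, and the algebra in your H\"older factorization is fine as far as it goes (the conjugacy of the exponents and the identity $\frac{1}{r_1}+\frac{\alpha_1}{n}=\frac12+\frac{b-2}{2n}$ both check out). The genuine gap is in the sufficiency reduction: because the gradient factor is pinned to $\||x|^{b/2}\nabla f\|_{2}$, your intermediate pair $(\alpha_1,r_1)$ is \emph{forced} onto the critical line for the $a=1$ endpoint, which is exactly the Hardy--Sobolev family \eqref{c26} with $\alpha_1=-d$; that family is valid only for $\frac{b-2}{2}\le\alpha_1\le\frac{b}{2}$ (with $n\ge3$, $b>2-n$), whereas your hypotheses yield only $\alpha_1\le0$. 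Concretely, with $b<0$ and $\alpha=(1-a)\beta$ (permitted by the lemma's side condition) you get $\alpha_1=0>\frac{b}{2}$, and the endpoint $\|f\|_{2n/(n-2+b)}\le C\||x|^{b/2}\nabla f\|_{2}$ is simply false: translating a fixed bump $\phi(\cdot-Re_1)$ to infinity makes the right-hand side decay like $R^{b/2}$ while the left-hand side stays constant.

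The failure is not merely cosmetic, because the reduction can pass through a false endpoint even when \eqref{d5} itself is true. Take $b=0$, $a=\frac12$, $\beta=0$, $l=2$, $\alpha=-0.6$: then $\alpha_1=2\alpha=-1.2<\frac{b-2}{2}=-1$, so the critical endpoint fails by the ``only if'' direction of \cite{CKN} (on the critical line the condition $\frac{b}{2}-\alpha_1\le1$ is necessary), yet the full inequality for $(\alpha,q+2)$ holds, since that pair lies \emph{off} the critical line where no extra condition is needed. Moreover, for $-n<b\le2-n$ (allowed by the stated hypothesis $b>-n$) the critical line has $\frac{1}{r_1}+\frac{\alpha_1}{n}\le0$ whenever $\alpha_1\le0$ fails to compensate, so there may be no admissible endpoint at all. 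So interpolation-through-the-endpoint cannot prove the lemma in its stated generality; the proof in \cite{CKN} does not factor through $a=1$ but estimates all parameter configurations directly (dyadic decompositions and scaling). Finally, note that even in the parameter range where your reduction does close, the endpoint itself is conceded to \cite{CKN} --- which is precisely what the paper does by citation --- so the proposal does not actually produce an independent proof of the substantive content.
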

\begin{rem}\label{rem1}
We note that

1. Taking $l=2$, $\beta=0$ in the inequality \eqref{d5}, we obtain the following Gagliardo-Nirenberg inequality
\begin{equation}\nonumber
\int|x|^c |f|^{p+2}dx\leq C(\int|x|^b|\nabla f|^2dx)^{\frac{\textbf{p}_c}{2(2-b)}}(\int|f|^2dx)^{\frac{(2-b)(p+2)-\textbf{p}_c}{2(2-b)}},
\end{equation}
for $-n<c\leq0$ and $\textbf{p}_c\leq(2-b)(p+2)$, where the version of radial symmetric function \eqref{d21} was established by the authors in \cite{ZOZ}.

2. A special case of \eqref{d5} is the following known Hardy-Sobolev inequality
\begin{equation}\label{c26}
(\int |x|^{-\frac{2nd}{n-2+b+2d}}|f|^{\frac{2n}{n-2+b+2d}}dx)^{\frac{n-2+b+2d}{n}}\leq C_{b,d}\int |x|^b|\nabla f|^2dx,
\end{equation}
which holds for $n\geq3,\ b>2-n$ and $-\frac{b}{2}\leq d\leq \frac{2-b}{2}$. See more details in \cite{WW}.
\end{rem}

Based on the above lemmas, we are now ready to prove the following compact weighted Sobolev embedding for general non-radial functions in $\dot{W}_b^{1,2}\cap L^{\sigma_0}(|x|^\gamma dx)$, which is essential to prove the improved Gagliardo-Nirenberg inequality stated in Proposition \ref{pro1}.
\begin{lemma}\label{lem4}
Let $n\geq3$, $2-n<b<2,\ b-2<c\leq \frac{nb}{n-2}$ and let $\sigma_0,\ \gamma$ be defined as \eqref{f64}. If the function $\omega$ satisfies
\begin{equation}\label{f68}
\limsup_{|x|\rightarrow0}\frac{\omega(x)}{|x|^c}<\infty\quad\quad\mbox{and}\quad\quad \limsup_{|x|\rightarrow\infty}\frac{\omega(x)}{|x|^c}<\infty,
\end{equation}
then the following embedding
\begin{equation}\label{d1}
\dot{W}_b^{1,2}\cap L^{\sigma_0}(|x|^\gamma dx)\hookrightarrow L^{p+2}(\omega(x)dx),
\end{equation}
is compact for all $\textbf{p}_c$ with $2(2-b)<\textbf{p}_c<(2-b)(p+2)$.
\end{lemma}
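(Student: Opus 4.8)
The plan is to prove the compact embedding \eqref{d1} by decomposing $\mathbb{R}^n$ into three regions — a small ball $B_\delta = \{|x| \le \delta\}$, an annulus $A_{\delta,R} = \{\delta \le |x| \le R\}$, and an exterior region $\{|x| \ge R\}$ — and controlling the contribution of each. Given a bounded sequence $(f_j)$ in $\dot{W}_b^{1,2} \cap L^{\sigma_0}(|x|^\gamma dx)$, I want to extract a subsequence converging strongly in $L^{p+2}(\omega(x)dx)$. On the annulus, where $|x|^b$ and $\omega(x) \sim |x|^c$ are both comparable to strictly positive constants, Lemma \ref{lem3} applied to $\Omega = A_{\delta,R}$ gives compactness of $\dot{W}_b^{1,2}(\Omega) \hookrightarrow L^{p+2}(\Omega; |x|^c dx)$ as long as the exponent $p+2$ lies strictly below the critical Sobolev exponent $\frac{2n+2c}{n-2+b}$; the hypothesis $2(2-b)<\textbf{p}_c<(2-b)(p+2)$ should be exactly what guarantees this strict subcriticality. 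The condition \eqref{f68} is what lets me replace $\omega(x)$ by $|x|^c$ on both $B_\delta$ and the exterior, so the tail estimates only need to be carried out for the weight $|x|^c$.

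For the two tail regions I would use an interpolation argument rather than compactness: the goal is to show that the $L^{p+2}(\omega dx)$ mass on $B_\delta$ and on $\{|x|\ge R\}$ can be made uniformly small (in $j$) by taking $\delta$ small and $R$ large. Near the origin I plan to interpolate the $L^{p+2}(|x|^c dx)$ norm between the controlled $\dot{W}_b^{1,2}$ norm (via the Hardy-Sobolev inequality \eqref{c26} or the Caffarelli-Kohn-Nirenberg inequality \eqref{d5} of Lemma \ref{lem3.3}) and the $L^{\sigma_0}(|x|^\gamma dx)$ norm. The point is to choose the interpolation so that the power of $|x|$ appearing is strictly positive on $B_\delta$ (yielding a factor $\delta^{\epsilon}\to 0$) and strictly negative on the exterior (yielding $R^{-\epsilon}\to 0$); this is where the strict two-sided inequality on $\textbf{p}_c$ and the definition \eqref{f64} of $\sigma_0$ with $-n<\gamma\le 0$ enter, since they place $p+2$ strictly between the two endpoint exponents and fix the scaling so that $L^{\sigma_0}(|x|^\gamma dx)$ is the scale-invariant space sitting below $\dot{H}^{s_c}$. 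I would verify the admissibility relation of Lemma \ref{lem3.3} for the chosen exponents $\alpha,\beta,l,a,q$ to justify the interpolation inequality on each tail.

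Assembling these, the standard diagonal/$\varepsilon$-argument finishes the proof: given $\varepsilon>0$, fix $\delta$ small and $R$ large so that the tail contributions are below $\varepsilon$ uniformly in $j$; on the fixed annulus extract a strongly convergent subsequence by Lemma \ref{lem3}; then the full sequence is Cauchy in $L^{p+2}(\omega dx)$ up to $\varepsilon$, and letting $\varepsilon\to 0$ with a diagonal extraction over a sequence $\delta_k\to0,\ R_k\to\infty$ produces the desired strongly convergent subsequence.

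I expect the main obstacle to be the tail estimates near the origin and at infinity, specifically verifying that the interpolation exponents can be chosen to make both $|x|$-powers have the correct sign simultaneously while remaining within the admissible range of Lemma \ref{lem3.3}. The strictness in $2(2-b)<\textbf{p}_c<(2-b)(p+2)$ gives a genuine gap between endpoint exponents, and the whole argument will hinge on exploiting that gap; a secondary technical nuisance is checking that the weight condition \eqref{f68} really does reduce $\omega$ to $|x|^c$ uniformly on the tails (controlling the $\limsup$ ratios by a finite constant on $B_\delta$ and on the exterior after choosing $\delta$, $R$ appropriately). The annulus step, by contrast, should be essentially a direct citation of Lemma \ref{lem3} once subcriticality is confirmed.
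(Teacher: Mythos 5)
Your proposal is correct and follows essentially the same route as the paper: the same three-region decomposition, the annulus handled by the compact embedding of Lemma \ref{lem3}, the small ball handled by comparing $\omega$ to $|x|^c$ and extracting a positive power of $\delta$ via the Hardy--Sobolev inequality \eqref{c26}, and the exterior handled by extracting a negative power of $R$ and controlling the remainder through the Caffarelli--Kohn--Nirenberg interpolation \eqref{f67} between the $\dot{W}_b^{1,2}$ and $L^{\sigma_0}(|x|^\gamma dx)$ norms. The only cosmetic difference is that the paper applies the argument to the differences $g_n=f_n-f$ after a weak-limit extraction rather than running a diagonal argument over $\delta_k,R_k$, which is an equivalent bookkeeping choice.
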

\begin{proof}
Let $\{f_n\}_{n\in\Bbb{N}}$ be a bounded sequence in $\dot{W}_b^{1,2}\cap L^{\sigma_0}(|x|^\gamma dx)$. Then there exists a function $f\in \dot{W}_b^{1,2}\cap L^{\sigma_0}(|x|^\gamma dx)$ such that, up to subsequence,
\begin{equation}\label{f69}
f_n\rightharpoonup f\quad\quad \mbox{in}\ \ \dot{W}_b^{1,2}\cap L^{\sigma_0}(|x|^\gamma dx)
\end{equation}
as $n\rightarrow\infty$. Define $g_n=f_n-f$. To prove the compactness of the embedding \eqref{d1}, it suffices to show that
\[\lim_{n\rightarrow\infty}\int \omega(x)|g_n|^{p+2}dx=0.\]

To do it, by \eqref{f68}, there exists $R_1>r_1>0$ and some constants $C>0$ such that
\begin{equation}\label{d3}
\omega(x)\leq C|x|^c,\quad\quad\ \mbox{for}\quad 0<|x|\leq r_1
\end{equation}
and
\begin{equation}\label{d2}
\omega(x)\leq C|x|^c,\quad\quad\ \mbox{for}\quad |x|\geq R_1.\quad\quad
\end{equation}
Given any small $\epsilon>0$, we first show that there exists $0<r<r_1$ such that
\begin{equation}\label{d10}
\int_{B_r}\omega(x)|g_n|^{p+2}dx<\frac{\epsilon}{3},
\end{equation}
where $B_\rho=\{x\in\Bbb{R}^n:\ |x|\leq \rho\}$ is a ball with the radius $\rho$.

Indeed, we choose a smooth cut-off function $\psi\in C_0^\infty(\Bbb{R}^n)$ satisfying
\begin{equation}\psi(x)=\nonumber
\left\{
\begin{aligned}
 &1, \quad |x|\leq\frac{r_1}{2},\\
 & 0, \quad|x|\geq r_1,
\end{aligned}\right.\ \quad\mbox{and}\quad\quad |\nabla\psi|\leq C.
\end{equation}
Then for $0<r<\frac{r_1}{2}$, we use \eqref{d3} to get
\begin{eqnarray}\nonumber
\lefteqn{\int_{B_r}\omega(x)|g_n|^{p+2}dx\leq C\int_{B_r}|x|^c|g_n|^{p+2}dx}\\\nonumber
&&\quad\quad\quad\quad\quad\quad\ \leq Cr^{\frac{(2-b)(p+2)-\textbf{p}_c}{2}}\int|x|^{\frac{np-(2-b)(p+2)}{2}}|\psi g_n|^{p+2}dx\\\nonumber
&&\quad\quad\quad\quad\quad\quad\ \leq Cr^{\frac{(2-b)(p+2)-\textbf{p}_c}{2}}(\int|x|^{b}|\nabla(\psi g_n)|^2dx)^{\frac{p+2}{2}}
\end{eqnarray}
for all $2(2-b)<\textbf{p}_c<(2-b)(p+2)$ and $b>2-n$, where we used the Hardy-Sobolev inequality \eqref{c26} at the last step.
This together with \eqref{f69} implies that
\begin{equation}\nonumber
\int_{B_r}\omega(x)|g_n|^{p+2}dx\leq CM^{\frac{p+2}{2}}r^{\frac{(2-b)(p+2)-\textbf{p}_c}{2}}
\end{equation}
for some real number $M>0$.

Next, we will control the integral $\int\omega(x)|g_n|^{p+2}dx$ out of a big ball. For $R>R_1$, we use \eqref{d2} to obtain
\begin{equation}\label{d6}
\int_{\Bbb{R}^n\backslash B_R}\omega(x)|g_n|^{p+2}dx\leq\frac{C}{R^{\frac{2-b}{2}}}\int_{\Bbb{R}^n\backslash B_R}|x|^{c+\frac{2-b}{2}}|g_n|^{p+2}dx.
\end{equation}

To estimate \eqref{d6}, we invoke the Caffarelli-Kohn-Nirenberg inequality \eqref{d5} and take $l=\sigma_1$, $\beta=\frac{\gamma_1}{\sigma_1}$, and $\alpha=\frac{d}{q+2}$ for some fixed real numbers $\gamma_1>-n,\ d>-n$ and $\sigma_1$. Then we obtain
\begin{equation}\label{f67}
\int|x|^d |u|^{q+2}dx\leq C\|\nabla u\|_{b,2}^{a(q+2)}\|u\|_{\gamma_1,\sigma_1}^{(1-a)(q+2)}
\end{equation}
for all $q\in(\frac{(n+d)\sigma_1}{n+\gamma_1}-2,\ \frac{2(2-b+d)}{n-2+b})$ and $b>2-n$,  where
\[a=\frac{\frac{n+d}{q+2}-\frac{n+\gamma_1}{\sigma_1}}{\frac{n-2+b}{2}-\frac{n+\gamma_1}{\sigma_1}}\in(0,1).\]
We note that this inequality \eqref{f67} will be needed in the argument of Proposition \ref{pro1}.
Consequently, if we set $d=c+\frac{2-b}{2}$ and $\sigma_1=\sigma_0$, $\gamma_1=\gamma$ in \eqref{f67}, where $\sigma_0$ and $\gamma$ are defined as \eqref{f64}, then it follows from \eqref{f67} that
\begin{equation}\label{d7}
\{g_n\}_{n\in\Bbb{N}}\ \mbox{is\ uniformly\ bounded\ in}\ L^{q+2}(|x|^{c+\frac{2-b}{2}}dx)
\end{equation}
for all $q\in(\frac{(2n+2c+2-b)\sigma_0}{2(n+\gamma)}-2,\ \frac{2c+3(2-b)}{n-2+b})$, where we used the fact that $\{g_n\}_{n\in\Bbb{N}}$ is uniformly bounded in $\dot{W}_b^{1,2}\cap L^{\sigma_0}(|x|^\gamma dx)$.

Since $2(2-b)<\textbf{p}_c<(2-b)(p+2)$, one can easily check that
\[\frac{(2n+2c+2-b)\sigma_0}{2(n+\gamma)}<p+2<\frac{2n+2c+2-b}{n-2+b},\]
which belongs to the range of $q$ in \eqref{d7}.
Thus, combining \eqref{d6} with \eqref{d7}, we get
\begin{equation}\nonumber
\int_{\Bbb{R}^n\backslash B_R}\omega(x)|g_n|^{p+2}dx\leq \frac{C}{R^{\frac{2-b}{2}}}M^{p+2}.
\end{equation}
Hence, for any $\epsilon>0$, there exist $R>R_1$ and $r_1>r>0$ such that both the estimate \eqref{d10} and
\begin{equation}\label{d11}
\int_{\Bbb{R}^n\backslash B_R}\omega(x)|g_n|^{p+2}dx<\frac{\epsilon}{3}
\end{equation}
hold.

Finally, for $R>r>0$ given above, the remaining task is to prove
\begin{equation}\label{f70}
\int_{B_R\backslash B_r}\omega(x)|g_n|^{p+2}dx<\frac{\epsilon}{3}.
\end{equation}
By Lemma \ref{lem3}, we deduce that
\[\dot{W}_b^{1,2}(B_R\backslash B_r)\hookrightarrow L^{p+2}(B_R\backslash B_r;\ |x|^cdx)\]
is compact for $b>2-n,\ b-2<c\leq \frac{nb}{n-2}$, and $\textbf{p}_c<(2-b)(p+2)$. This together with \eqref{f69} yields that
\begin{equation}\nonumber
g_n\rightarrow0\quad\quad\mbox{in}\ \ L^{p+2}(B_R\backslash B_r;\ |x|^cdx).
\end{equation}
So, \eqref{f70} is proved. Therefore, putting \eqref{d10}, \eqref{d11}, and \eqref{f70} all together, we conclude that the embedding \eqref{d1} is compact. We finish the proof of Lemma \ref{lem4}.
\end{proof}

We remark that the weight function $\omega$ in Lemma \ref{lem4} does not need to be radial.
As a special case, for $\omega(x)=|x|^c$ with $c>\frac{(n-2+b)p}{2}-2+b$, Lemma \ref{lem4} implies that the embedding
\begin{equation}
\dot{W}_b^{1,2}\cap L^{\sigma_0}(|x|^\gamma dx)\hookrightarrow L^{p+2}(|x|^cdx)
\end{equation}
is compact for $\textbf{p}_c<(2-b)(p+2)$.

At the end of this section, we recall the well-known Hardy-Littlewood inequality.
\begin{lemma}\label{lem6}
(Hardy-Littlewood inequality) Let $1<p\leq q<\infty$, $0<s<n$, $\rho\geq0$ satisfy the conditions
\[\rho<\frac{n}{q},\quad\quad s=\frac{n}{p}-\frac{n}{q}+\rho.\]
Then, for any $u\in W^{s,p}$, we have
\[\||x|^{-\rho}u\|_{q}\leq C\|\nabla^su\|_p.\]
\end{lemma}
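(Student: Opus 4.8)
The plan is to recognize Lemma~\ref{lem6} as an instance of the weighted Hardy--Littlewood--Sobolev (Stein--Weiss) inequality, reached after representing $u$ through a Riesz potential. First I would set $f:=\nabla^s u=(-\Delta)^{s/2}u$, so that $\|f\|_p=\|\nabla^s u\|_p$, and invert the fractional Laplacian to obtain
\[
u(x)=(-\Delta)^{-s/2}f(x)=c_{n,s}\int_{\mathbb{R}^n}\frac{f(y)}{|x-y|^{n-s}}\,dy=:I_s f(x),
\]
which is valid for $0<s<n$ and $u\in W^{s,p}$ (justified by density of Schwartz functions together with the fact that $I_s(-\Delta)^{s/2}$ is the identity on the relevant class). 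Multiplying by $|x|^{-\rho}$, the claimed estimate becomes
\[
\big\||x|^{-\rho}I_s f\big\|_q\le C\|f\|_p,
\]
which is exactly the Stein--Weiss inequality with no weight on the source term.

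It then remains to check that the parameters meet the hypotheses of Stein--Weiss in the form $\||x|^{-\beta}I_s f\|_q\le C\||x|^{\gamma}f\|_p$ with $\gamma=0$ and $\beta=\rho$. These are: $1<p\le q<\infty$ (given); $0<s<n$ (given); $\beta=\rho<n/q$ (given); $\gamma=0<n/p'$ (automatic, since $p<\infty$); the balance condition $\gamma+\beta\ge 0$, i.e. $\rho\ge 0$ (given); and the dimensional identity
\[
\frac{1}{q}=\frac{1}{p}-\frac{s-(\gamma+\beta)}{n}=\frac{1}{p}-\frac{s-\rho}{n},
\]
which rearranges precisely to $s=\frac{n}{p}-\frac{n}{q}+\rho$, the second hypothesis of the lemma. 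Hence every condition holds and the inequality follows.

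The one delicate point is the borderline case $p=q$, where Stein--Weiss requires the strict condition $\gamma+\beta>0$ rather than merely $\gamma+\beta\ge 0$; but when $p=q$ the scaling identity forces $s=\rho$, and since $s>0$ we get $\rho=s>0$, so $\gamma+\beta=\rho>0$ and the strict requirement is met. If one prefers to avoid quoting Stein--Weiss as a black box, the main obstacle is to prove the weighted kernel bound directly: writing $Tf(x)=\int |x|^{-\rho}|x-y|^{-(n-s)}f(y)\,dy$, one splits $\mathbb{R}^n$ into the regions $|y|\le |x|/2$, $|x|/2<|y|<2|x|$, and $|y|\ge 2|x|$, and combines the near-diagonal piece (where $|x|^{-\rho}\sim|y|^{-\rho}$, so the unweighted Hardy--Littlewood--Sobolev inequality applies after absorbing the weight) with the two off-diagonal pieces (where the potential and the weight have comparable homogeneity, so Hardy-type/Schur estimates close the bound). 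The homogeneity bookkeeping, together with the use of $\rho<n/q$ to guarantee local integrability of $|x|^{-\rho q}$ near the origin, is exactly where the stated parameter restrictions are genuinely consumed.
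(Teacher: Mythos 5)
Your proposal is correct and follows essentially the same route as the paper, whose entire proof is the citation ``See Theorem $B^\ast$ in \cite{SW2}'': you correctly identify the lemma as the Stein--Weiss weighted inequality for the Riesz potential $I_s$ applied to $f=(-\Delta)^{s/2}u$, and your verification of the hypotheses (including the borderline case $p=q$, where the scaling relation forces $\rho=s>0$) is accurate. The only difference is that you spell out the parameter bookkeeping that the paper leaves implicit.
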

\begin{proof}
See Theorem $B^\ast$ in \cite{SW2}.
\end{proof}
\begin{rem}
In particular, one has
\[\|u\|_{\gamma,\sigma_0}\leq C\|\nabla^{s}u\|_2, \quad \forall u\in \dot{W}^{s,2}\]
for $b>2-n,\ 0<s_c<\frac{2-b}{2}$ and $-n<\gamma\leq0$, where $\sigma_0$ and $\gamma$ are defined as \eqref{f64}. Note that $\dot{H}^{s_c}\hookrightarrow L^{\sigma_0}(|x|^\gamma dx)$ since $\sigma_0=\frac{2(n+\gamma)}{n-2s_c}$.
\end{rem}

\subsection{A localized virial estimate}
Let $\theta\in C_0^\infty(\Bbb{R}^n)$ be a smooth non-negative radial cutoff function satisfying
\begin{equation}\theta(x)=
\left\{
\begin{aligned}
 & |x|^2,\quad \mbox{if}\ |x|\leq 2,\\\label{f36}
 & 0,\quad\quad \mbox{if}\ |x|\geq4.
\end{aligned}\right. \quad \mbox{and} \quad \theta''(r)\leq2,\quad \mbox{for}\ \ r=|x|.
\end{equation}
By defining $\phi_R(x)=R^2\theta(\frac{x}{R}),$ we can deduce that
\begin{equation}\label{a10}
2-\phi''_R(r)\geq0,\quad 2-\frac{\phi'_R(r)}{r}\geq0, \quad 2n-\Delta\phi_R\geq0.
\end{equation}

Let $u\in C([0, T_\ast); \dot{H}^{s_c}\cap\dot{W}_b^{1,2})$ be a solution to the dINLS \eqref{a0}. For any $R>0$ and $t\in [0, T^\ast]$, we denote
\begin{equation}\label{e5}
V_{\psi_R}(t)=\int\psi_R(x)|u|^2dx,
\end{equation}
and put $\nabla\psi_R=\frac{\nabla\phi_R}{|x|^b}$, then we list the following standard virial identities, which up to standard regularization arguments are obtained by integration by parts on \eqref{a0} (see \cite{ZZ1})
\begin{equation}\label{b26}
V'_{\psi_R}(t)=2\textmd{Im}\int\nabla\phi_R\cdot\nabla u\overline{u}dx,
\end{equation}
and
\begin{eqnarray}\nonumber
\lefteqn{V''_{\psi_R}(t)=-\frac{2}{p+2}\int\left(p|x|^c\Delta\phi_R-2\nabla\phi_R\cdot\nabla|x|^c\right)|u|^{p+2}dx}\\\nonumber
&&\quad\quad\ +4\textmd{Re}\int(\nabla^2\phi_R\cdot\nabla u)\cdot\nabla \overline{u}|x|^bdx-2\int(\nabla\phi_R\cdot\nabla |x|^b)|\nabla u|^2dx\\\label{b11}
&&\quad\quad\ -\int(|x|^b\Delta^2\phi_R+\nabla\Delta\phi_R\cdot\nabla|x|^b)|u|^2dx.
\end{eqnarray}

Action of partial derivatives on radial functions
\[\frac{\partial}{\partial x_j}=\frac{x_j}{r}\partial_r,\quad\quad \frac{\partial^2}{\partial x_j\partial x_k}=\left(\frac{\delta_{jk}}{r}-\frac{x_jx_k}{r^3}\right)\partial_r+\frac{x_jx_k}{r^2}\partial_r^2\]
implies that
\begin{eqnarray}\nonumber
\lefteqn{V''_{\psi_R}(t)=-\frac{2}{p+2}\int\left(p\Delta\phi_R-\frac{2c\partial_r\phi_R}{r}\right)|x|^c|u|^{p+2}dx}\\\nonumber
&&\quad\quad-\int(|x|^b\Delta^2\phi_R+\nabla\Delta\phi_R\cdot\nabla|x|^b)|u|^2dx\\\nonumber
&&\quad\quad+4\int\left(\frac{\partial_r^2\phi_R}{r^2}-\frac{\partial_r\phi_R}{r^3}\right)|x|^b|x\cdot\nabla u|^2dx+(4-2b)\int\frac{\partial_r\phi_R}{r}|x|^b|\nabla u|^2dx.
\end{eqnarray}
Now, using \eqref{f36}, we rewrite the above identity as
\[V''_{\psi_R}(t)=4(2-b)\|\nabla u\|^2_{b,2}-\frac{4\textbf{p}_c}{p+2}\|u\|^{p+2}_{c,p+2}+\mathcal{R}_1+\mathcal{R}_2+\mathcal{R}_3,\quad\quad\quad\]
where
\begin{eqnarray}\nonumber
&&\mathcal{R}_1=-\int_{|x|\geq2R}\left(|x|^b\Delta^2\phi_R+\nabla\Delta\phi_R\cdot\nabla|x|^b\right)|u|^2 dx,\\\nonumber
&&\mathcal{R}_2=4\int_{|x|\geq2R}\left(\frac{\partial_r^2\phi_R}{r^2}-\frac{\partial_r\phi_R}{r^3}\right)|x|^b|x\cdot\nabla u|^2dx\\\nonumber
&&\quad\quad\quad +(4-2b)\int_{|x|\geq2R}\left(\frac{\partial_r\phi_R}{r}-2\right)|x|^b|\nabla u|^2dx,\\\nonumber
&&\mathcal{R}_3=-\frac{2}{p+2}\int_{|x|\geq2R}\left(p\Delta\phi_R-\frac{2c\partial_r\phi_R}{r}\right)|x|^c|u|^{p+2}dx+\frac{4\textbf{p}_c}{p+2}\int_{|x|\geq2 R}|x|^c|u|^{p+2}dx.\quad\quad
\end{eqnarray}
Thus, proceeding similarly to the argument of Lemma 2.8 in \cite{ZZ1}, which deals with the radial solution to \eqref{a0}, we deduce the following localized virial estimate.
\begin{lemma}\label{lem5.2}
Let $2-n<b\leq0$, $c\geq b-2$ and $s_c$ defined as \eqref{f71}. If $u\in C([0, T^\ast);\ \dot{H}^{s_c}\cap\dot{W}_b^{1,2})$ is a solution to \eqref{a0} and $T^\ast$ is the maximal existence time, then there exists a constant $C>0
$ depending only on $b, \textbf{p}_c$ such that for all $t\in [0, T^\ast)$,
\begin{eqnarray}\nonumber
\lefteqn{V_{\psi_R}''(t)\leq 8(ps_c+2-b)E(u_0)-4ps_c\|\nabla u\|^2_{b,2}}\\\label{f4}
&&\quad\quad+C\left(\frac{1}{R^{2-b}}\int_{2R\leq|x|\leq4R}|u|^2dx+\int_{|x|\geq R}|x|^c|u|^{p+2}dx\right).
\end{eqnarray}
\end{lemma}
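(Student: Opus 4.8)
Starting from the exact identity for $V''_{\psi_R}(t)$ displayed just above the statement,
\[V''_{\psi_R}(t)=4(2-b)\|\nabla u\|^2_{b,2}-\frac{4\textbf{p}_c}{p+2}\|u\|^{p+2}_{c,p+2}+\mathcal{R}_1+\mathcal{R}_2+\mathcal{R}_3,\]
my plan is to process the two leading terms through the conservation laws and then show that the three remainders collapse into the claimed annular and tail bounds. For the leading part I would insert the conserved energy \eqref{c10} in the form $\|u\|^{p+2}_{c,p+2}=(p+2)\bigl(\tfrac12\|\nabla u\|^2_{b,2}-E(u_0)\bigr)$, which gives
\[4(2-b)\|\nabla u\|^2_{b,2}-\frac{4\textbf{p}_c}{p+2}\|u\|^{p+2}_{c,p+2}=\bigl(4(2-b)-2\textbf{p}_c\bigr)\|\nabla u\|^2_{b,2}+4\textbf{p}_c E(u_0).\]
The definition \eqref{f71} of $s_c$ together with $\textbf{p}_c=np-2c$ yields the algebraic identity $\textbf{p}_c=2(ps_c+2-b)$; substituting it turns the coefficient $4(2-b)-2\textbf{p}_c$ into $-4ps_c$ and $4\textbf{p}_c$ into $8(ps_c+2-b)$, reproducing exactly the first two terms of \eqref{f4}.

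Next I would dispatch the lower-order remainders $\mathcal{R}_1$ and $\mathcal{R}_3$. Since $\phi_R(x)=R^2\theta(x/R)$ equals $|x|^2$ on $|x|\le 2R$ and vanishes on $|x|\ge 4R$, every derivative of $\phi_R$ of order $\ge 3$ is supported in the annulus $2R\le|x|\le 4R$ and obeys the scaling $|\partial^k\phi_R|\lesssim R^{2-k}$. For $\mathcal{R}_1$ this gives $|x|^b\Delta^2\phi_R\lesssim R^{b}R^{-2}=R^{b-2}$ and $|\nabla\Delta\phi_R\cdot\nabla|x|^b|\lesssim R^{-1}R^{b-1}=R^{b-2}$ on the annulus, so $|\mathcal{R}_1|\le C R^{-(2-b)}\int_{2R\le|x|\le 4R}|u|^2\,dx$. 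For $\mathcal{R}_3$ the factors $\Delta\phi_R$ and $\partial_r\phi_R/r$ are uniformly bounded, hence $p\Delta\phi_R-2c\,\partial_r\phi_R/r=O(1)$ and $|\mathcal{R}_3|\le C\int_{|x|\ge 2R}|x|^c|u|^{p+2}\,dx\le C\int_{|x|\ge R}|x|^c|u|^{p+2}\,dx$, the last step enlarging the domain using non-negativity of the integrand. These are precisely the two error terms in \eqref{f4}.

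The delicate point, and the step I expect to be the main obstacle, is the gradient remainder $\mathcal{R}_2$: it carries weighted gradient energy $|x|^b|\nabla u|^2$ and $|x|^b|x\cdot\nabla u|^2$ over $|x|\ge 2R$, yet the right-hand side of \eqref{f4} contains no gradient term to absorb it, so I must prove $\mathcal{R}_2\le 0$ and simply discard it. The key is to split $\nabla u$ into radial and angular parts via $|x\cdot\nabla u|^2=r^2|\partial_r u|^2$ and $|\nabla u|^2=|\partial_r u|^2+r^{-2}|\nabla_\omega u|^2$, where $\nabla_\omega u$ is the spherical gradient and $|\nabla_\omega u|^2\ge0$. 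Recombining, the coefficient of $|x|^b|\partial_r u|^2$ in $\mathcal{R}_2$ collapses to $4(\phi''_R-2)-2b(\phi'_R/r-2)$, while the coefficient of $r^{-2}|x|^b|\nabla_\omega u|^2$ is $(4-2b)(\phi'_R/r-2)$. By the structural bounds \eqref{a10} one has $\phi''_R-2\le0$ and $\phi'_R/r-2\le0$; since the hypothesis $b\le 0$ gives $-2b\ge0$ and $4-2b>0$, every coefficient is non-positive, whence $\mathcal{R}_2\le 0$. I expect this recombination—rather than any individual estimate—to be the crux, since it is exactly the sign condition $b\le0$ together with the three geometric inequalities in \eqref{a10} that renders the anisotropic Hessian term harmless.

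Combining the three steps, that is, substituting the energy identity into the leading terms, bounding $\mathcal{R}_1$ and $\mathcal{R}_3$ by the annular $L^2$ term and the weighted $L^{p+2}$ tail respectively, and discarding $\mathcal{R}_2\le0$, yields the estimate \eqref{f4} with a constant $C$ depending only on $b$ and $\textbf{p}_c$, completing the proof.
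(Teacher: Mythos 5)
Your proposal is correct and follows essentially the same route as the paper: it starts from the displayed decomposition $V''_{\psi_R}=4(2-b)\|\nabla u\|^2_{b,2}-\frac{4\mathbf{p}_c}{p+2}\|u\|^{p+2}_{c,p+2}+\mathcal{R}_1+\mathcal{R}_2+\mathcal{R}_3$, uses energy conservation and the identity $\mathbf{p}_c=2(ps_c+2-b)$ for the leading terms, bounds $\mathcal{R}_1,\mathcal{R}_3$ by scaling of $\phi_R$, and discards $\mathcal{R}_2\le0$ via the radial/angular splitting together with \eqref{a10} and $b\le0$ — which is exactly the argument of Lemma 2.8 in \cite{ZZ1} that the paper invokes.
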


\section{Blow-up dynamics in $W^{1,2}_b$}
In this section, our aim is to investigate the dynamics of radial blow-up solution to \eqref{a0} in $W^{1,2}_b$, including the upper bound on the blow-up rate and the phenomenon of $L^2$-norm concentration.

We first recall the radial weighted Strauss type inequality in $W^{1,2}_{b}$, which was proved in \cite{ZOZ}.
\begin{lemma}\label{lem3.1}
Let $b\geq2-2n$ and $u\in W_b^{1,2}$ be a radial function. Then one has the following inequality
\begin{equation}\nonumber
\sup_{x\in \Bbb{R}^n}|x|^{\frac{2n-2+b}{4}}|u(x)|\leq C_n\|\nabla u\|_{b,2}^{\frac{1}{2}}\|u\|_{2}^{\frac{1}{2}}.
\end{equation}
\end{lemma}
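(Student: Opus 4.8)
The plan is to prove the radial weighted Strauss-type inequality
\[
\sup_{x\in\Bbb{R}^n}|x|^{\frac{2n-2+b}{4}}|u(x)|\le C_n\|\nabla u\|_{b,2}^{1/2}\|u\|_2^{1/2}
\]
by the standard fundamental-theorem-of-calculus argument adapted to the weight $|x|^b$. Since $u$ is radial I would write $u=u(r)$ with $r=|x|$, assume first that $u$ is smooth and compactly supported (the general case following by density in $W_b^{1,2}$), and study the quantity $g(r):=r^{\frac{2n-2+b}{2}}|u(r)|^2$, whose square root is exactly the quantity to be bounded up to the factor $|x|^{(2n-2+b)/4}$. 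Because $u$ decays at infinity, $g(r)=-\int_r^\infty g'(\rho)\,d\rho$, so the whole proof reduces to estimating $\int_0^\infty |g'(\rho)|\,d\rho$ and taking a supremum over $r$.

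The key computation is the derivative
\[
g'(r)=\tfrac{2n-2+b}{2}\,r^{\frac{2n-4+b}{2}}|u|^2+r^{\frac{2n-2+b}{2}}\cdot 2\,\mathrm{Re}(u'\overline{u}),
\]
so that, integrating the absolute value over $(0,\infty)$,
\[
\int_0^\infty|g'(r)|\,dr\le \tfrac{2n-2+b}{2}\int_0^\infty r^{\frac{2n-4+b}{2}}|u|^2\,dr
+2\int_0^\infty r^{\frac{2n-2+b}{2}}|u'|\,|u|\,dr.
\]
First I would attack the second integral: splitting the weight as $r^{\frac{2n-2+b}{2}}=\big(r^{(b+n-1)/2}|u'|\big)\big(r^{(n-1)/2}|u|\big)$ and applying Cauchy–Schwarz turns it into $\|\nabla u\|_{b,2}\,\|u\|_2$ after recognizing that in radial coordinates $\|\nabla u\|_{b,2}^2=\omega_{n-1}\int_0^\infty r^{b+n-1}|u'|^2\,dr$ and $\|u\|_2^2=\omega_{n-1}\int_0^\infty r^{n-1}|u|^2\,dr$, with $\omega_{n-1}$ the surface measure of the unit sphere. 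This is precisely the term that produces the desired right-hand side $\|\nabla u\|_{b,2}\,\|u\|_2$, whose square root gives the claimed bound.

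The main obstacle is the first integral, $\int_0^\infty r^{\frac{2n-4+b}{2}}|u|^2\,dr$, which is \emph{not} of the form $\|\nabla u\|_{b,2}\|u\|_2$ and looks like a pure $\|u\|_2^2$-type weight only if $\frac{2n-4+b}{2}=n-1$, i.e.\ $b=2$; for $b<2$ it carries a lower power of $r$ and must itself be controlled by a Hardy-type inequality. The natural remedy is to invoke the Hardy–Sobolev inequality already recorded in \eqref{c26} of Remark \ref{rem1} (with the parameter $d$ chosen so that the weight $|x|^b|u|^2/|x|^2$ matches $r^{\frac{2n-4+b}{2}}$), or equivalently a one-dimensional weighted Hardy inequality on $(0,\infty)$, to bound this term again by $\|\nabla u\|_{b,2}\,\|u\|_2$; the constraint $b\ge 2-2n$ in the hypothesis is exactly what guarantees the weight exponent $\frac{2n-2+b}{2}\ge 0$ so that $g(r)\to 0$ as $r\to 0$ and the boundary term vanishes, making the fundamental-theorem step legitimate. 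Once both integrals are dominated by $C_n\|\nabla u\|_{b,2}\|u\|_2$, taking the square root and the supremum over $r$ completes the argument, and density of smooth compactly supported radial functions in $W_b^{1,2}$ extends it to all radial $u\in W_b^{1,2}$.
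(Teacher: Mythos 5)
First, note that the paper does not actually prove Lemma \ref{lem3.1}: it is recalled from \cite{ZOZ}, so there is no in-paper proof to compare against and your attempt has to be judged against the standard Strauss-type argument. Your skeleton is the right one: pass to radial coordinates, set $g(r)=r^{\frac{2n-2+b}{2}}|u(r)|^2$, write $g(r)=-\int_r^\infty g'(\rho)\,d\rho$, and apply Cauchy--Schwarz to the cross term with the splitting $\rho^{\frac{2n-2+b}{2}}=\rho^{\frac{n-1+b}{2}}\cdot\rho^{\frac{n-1}{2}}$; that term does indeed yield $C_n\|\nabla u\|_{b,2}\|u\|_2$, whose square root gives the claim.

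The gap is in your treatment of the ``first integral''. By putting absolute values on both pieces of $g'$ you manufacture the quantity $\int_0^\infty r^{\frac{2n-4+b}{2}}|u|^2\,dr$ and then propose to control it by the Hardy--Sobolev inequality \eqref{c26}. That inequality is only stated for $b>2-n$ (see Remark \ref{rem1}), whereas the lemma is claimed for all $b\ge 2-2n$. In the range $2-2n\le b\le 2-n$ the route breaks: the weighted Hardy bound $\int|x|^{b-2}|u|^2dx\lesssim\int|x|^b|\nabla u|^2dx$ fails for general radial $u$ (it is false at $b=2-n$, and for $b<2-n$ the left-hand side already diverges near the origin whenever $u(0)\neq0$), and at the endpoint $b=2-2n$ the integral $\int_0^\infty|g'(r)|\,dr$ is itself infinite. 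The missing observation is that this term never needs to be estimated: in the identity $g(r)=-\int_r^\infty g'(\rho)\,d\rho$ it enters as $-\tfrac{2n-2+b}{2}\int_r^\infty\rho^{\frac{2n-4+b}{2}}|u|^2\,d\rho\le 0$, precisely because $b\ge 2-2n$ makes the coefficient $\tfrac{2n-2+b}{2}$ nonnegative, so it can simply be discarded, leaving only the cross term. (Relatedly, your explanation that $b\ge 2-2n$ is needed so that $g(r)\to0$ as $r\to0$ misidentifies the role of the hypothesis: the representation $g(r)=-\int_r^\infty g'$ involves no boundary term at the origin; the hypothesis enters only through the sign of this coefficient.) With that one-line fix, your argument closes and gives the stated inequality with $C_n=(2/\omega_{n-1})^{1/2}$.
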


Based on the localized virial estimate in Lemma \ref{lem5.2}, which is also suitable for the radial case, we give the proof of Theorem \ref{thm1.3}.

\textbf{Proof of Theorem \ref{thm1.3}.}
By Lemma \ref{lem3.1} and the mass conservation \eqref{d26}, we have
\begin{eqnarray}\nonumber
\lefteqn{\int_{|x|\geq R}|x|^c|u|^{p+2}dx\leq R^{\frac{(2-b)p-2\textbf{p}_c}{4}}(\sup_{|x|\geq R}|x|^\frac{2n+b-2}{4}|u|)^p\|u\|_2^2}\\\nonumber
&&\quad\quad\quad\quad\quad\quad\ \leq C_{u_0}R^{\frac{(2-b)p-2\textbf{p}_c}{4}}\|\nabla u\|_{b,2}^{\frac{p}{2}},\quad\quad\quad\quad
\end{eqnarray}
where $C_{u_0}$ depends on the norm $\|u_0\|_2$.
Then, we use the Young inequality to obtain
\begin{equation}\label{f91}
\int_{|x|\geq R}|x|^c|u|^{p+2}dx \leq \epsilon \|\nabla u\|_{b,2}^2+C_{u_0,\epsilon}R^{\frac{(2-b)p-2\textbf{p}_c}{4-p}},
\end{equation}
where we have used the hypothesis of $\textbf{p}_c$ and $p<4.$

Using Lemma \ref{lem5.2}, mass and energy conservations, we deduce
\begin{eqnarray}\nonumber
\lefteqn{2(\textbf{p}_c-4+2b)\int|x|^b|\nabla u|^2dx+V''_{\psi_R}(t)}\\\nonumber
&&\quad\quad\quad\quad\quad\leq C_{u_0}\left(1+\frac{1}{R^{2-b}}+\epsilon\int|x|^b|\nabla u|^2dx+\frac{1}{R^{\frac{2-b}{\beta}}}\right),
\end{eqnarray}
where $\beta=\frac{(4-p)(2-b)}{2\textbf{p}_c-(2-b)p}$.

Let $\epsilon>0$ be small enough such that $\textbf{p}_c-4+2b>\epsilon C_{u_0}$. Therefore if $R\ll1$, since $0<\beta<1$, then there exists a universal constant $C_{u_0}>0$ such that
\begin{equation}\nonumber
(\textbf{p}_c-4+2b)\int|x|^b|\nabla u|^2dx+V''_{\psi_R}(t)\leq\frac{ C_{u_0}}{R^{\frac{2-b}{\beta}}}.
\end{equation}
Now, we integrate the above inequality from $\tau$ to $t$, and then one more time from $\tau$ and $t_1$, since integration by parts yields
\[\int_\tau^{t_1}\int_\tau^t\|\nabla u(s)\|_{b,2}^2dsdt=t_1\int_\tau^{t_1}\|\nabla u(t)\|_{b,2}^2dt-\int_\tau^{t_1}t\|\nabla u(t)\|_{b,2}^2dt,\]
which implies that
\begin{eqnarray}\nonumber
\lefteqn{(\textbf{p}_c-4+2b)\int_\tau^{t_1}(t_1-t)\|\nabla u\|_{b,2}^2dt+V_{\psi_R}(t_1)}\\\label{d24}
&&\ \quad\quad\quad\quad\quad\quad\leq C_{u_0}\frac{(t_1-\tau)^2}{2R^{\frac{2-b}{\beta}}}+V_{\psi_R}(\tau)+(t_1-\tau)V'_{\psi_R}(\tau).
\end{eqnarray}
To control \eqref{d24}, we recall $\nabla\psi_R=\frac{\nabla\phi_R}{|x|^b}$ and $\phi_R(x)=R^2\theta(\frac{x}{R})$, then
\begin{equation}\label{d25}
V_{\psi_R}(\tau)\leq C_b\int_{|x|\leq4R}|x|^{2-b}|u(\tau)|^2dx\leq C_bR^{2-b}\|u\|_2^2.
\end{equation}
Moreover, by the H\"{o}lder inequality and the properties of $\theta$, we have
\begin{eqnarray}\nonumber
\lefteqn{V'_{\psi_R}(\tau)=2\textmd{Im}\int\nabla\phi_R\cdot\nabla u(\tau)\overline{u}(\tau)dx}\\\label{f73}
&&\quad\ \ \leq C(\int R^4|\nabla\theta(\frac{x}{R})|^2|\nabla u|^2dx)^{\frac{1}{2}}\|u\|_2^2\leq CR^{\frac{2-b}{2}}\|\nabla u\|_{b,2}\|u\|_2^2.
\end{eqnarray}
Thus, injecting \eqref{d25} and \eqref{f73} into \eqref{d24}, we get
\begin{eqnarray}\nonumber
\lefteqn{(\textbf{p}_c-4+2b)\int_\tau^{t_1}(t_1-t)\|\nabla u\|_{b,2}^2dt+V_{\psi_R}(t_1)}\\\nonumber
&&\quad\quad\quad\leq C_{b, u_0}\left(\frac{(t_1-\tau)^2}{R^{\frac{2-b}{\beta}}}+R^{2-b}+R^{\frac{2-b}{2}}(t_1-\tau)\|\nabla u\|_{b,2}\right).
\end{eqnarray}
Let $t_1\rightarrow T^\ast$ in the last inequality, we conclude that the first integral on the left hand side converges and
\begin{eqnarray}\label{c20}
\int_\tau^{T^\ast}(T^\ast-t)\|\nabla u\|_{b,2}^2dt\leq C_{b, u_0}\left(\frac{(T^\ast-\tau)^2}{R^{\frac{2-b}{\beta}}}+R^{2-b}+R^{\frac{2-b}{2}}(T^\ast-\tau)\|\nabla
u\|_{b,2}\right).
\end{eqnarray}
Choosing a suitable $R$ so that $R(\tau)=(T^\ast-\tau)^{\frac{2\beta}{(2-b)(\beta+1)}},$ this together with \eqref{c20} yields that
\begin{eqnarray}\nonumber
\lefteqn{\int_\tau^{T^\ast}(T^\ast-t)\|\nabla u\|_{b,2}^2dt\leq C_{b, u_0}\left((T^\ast-\tau)^{\frac{2\beta}{\beta+1}}+(T^\ast-\tau)^{\frac{2\beta+1}{\beta+1}}\|\nabla
u\|_{b,2}\right)}\\\label{c21}
&&\quad\quad\quad\quad\quad\quad\quad\quad\ \leq C_{b, u_0}\left((T^\ast-\tau)^{\frac{2\beta}{\beta+1}}+(T^\ast-\tau)^2\|\nabla u\|^2_{b,2}\right),\quad\quad
\end{eqnarray}
where the last step follows from the Young inequality.

Next, by defining $f(\tau)=\int_\tau^{T^\ast}(T^\ast-t)\|\nabla u\|_{b,2}^2dt$, we observe that
\begin{equation}\nonumber
f(\tau)\leq C_{b, u_0}(T^\ast-\tau)^{\frac{2\beta}{\beta+1}}-(T^\ast-\tau)f'(\tau)
\end{equation}
with $f(0)\leq C_{u_0}$, which also means that
\begin{equation}\nonumber
\left(\frac{f(\tau)}{T^\ast-\tau}\right)'\leq C_{b,u_0}(T^\ast-\tau)^{-\frac{2}{\beta+1}}.
\end{equation}
Integrating this differential inequality from 0 to $\tau$, we have
\begin{eqnarray}\nonumber
\quad\quad\quad\frac{f(\tau)}{T^\ast-\tau}\leq \frac{C_{b, u_0}}{(T^\ast-\tau)^{\frac{1-\beta}{1+\beta}}}-\frac{C_{b, u_0}}{(T^\ast)^{\frac{1-\beta}{1+\beta}}}+\frac{f(0)}{T^\ast}\leq\frac{C_{b, u_0}}{(T^\ast-\tau)^{\frac{1-\beta}{1+\beta}}}
\end{eqnarray}
for $\tau$ close enough to $T^\ast$. Hence, \eqref{c23} is proved, and we finish the proof of Theorem \ref{thm1.3}.\quad\quad\quad\quad\quad\quad\quad\quad\quad\quad\quad\quad\quad\quad\quad\quad\quad\quad\quad\quad\quad\quad\quad\quad\quad\quad\quad\quad\quad\quad\quad\quad\quad\quad $\square$

Next, we focus on the mass-critical case $s_c=0$, and study the $L^2$-norm concentration of radial blow-up solution to \eqref{a0}. The main ingredient of the proof of Theorem \ref{thm1.1} is the following compact weighted Sobolev embedding lemma for radial function in $W_b^{1,2}$, which was established in \cite{SW}.
\begin{lemma}\label{lem1}
Let $b<2$ and $d\geq b-2$. Then the following statements hold.

(1) If $d<0$, then the weighted Sobolev embedding for radial functions
\begin{equation}\nonumber
W_b^{1,2}\hookrightarrow L^q(|x|^ddx)
\end{equation}
is compact for $2<q<\frac{2d+2n}{n-2+b}$.

(2) If $d\geq 0$, then the weighted Sobolev embedding for radial functions
\begin{equation}\nonumber
W_b^{1,2}\hookrightarrow L^q(|x|^ddx)
\end{equation}
is also compact for $\frac{4d}{2n-2+b}+2< q<\frac{2d+2n}{n-2+b}$.
\end{lemma}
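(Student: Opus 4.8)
The plan is to prove compactness by the standard radial concentration--compactness splitting, using the radial decay estimate of Lemma~\ref{lem3.1} as the engine to kill the tail at infinity and a Rellich-type argument on bounded annuli to treat the bulk. Let $\{u_j\}$ be radial and bounded in $W_b^{1,2}$; after passing to a subsequence we may assume $u_j\rightharpoonup u$ weakly (with $u$ again radial), and by linearity it suffices to show that $g_j:=u_j-u\rightharpoonup0$ satisfies $\int|x|^d|g_j|^q\,dx\to0$. Fix $\varepsilon>0$ and split $\mathbb{R}^n=B_r\cup(B_R\setminus B_r)\cup(\mathbb{R}^n\setminus B_R)$; I will choose $R$ large and $r$ small (depending only on $\varepsilon$ and the uniform bound $M:=\sup_j\|g_j\|_{W_b^{1,2}}$) to make the outer and inner contributions small uniformly in $j$, and then send $j\to\infty$ on the fixed annulus.

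The tail at infinity is where radiality and the sharp exponent enter. Applying Lemma~\ref{lem3.1} to each (radial) $g_j$ gives the pointwise bound $|g_j(x)|\le C_n|x|^{-\frac{2n-2+b}{4}}M$, so that, writing $|g_j|^q=|g_j|^{q-2}|g_j|^2$,
\[
\int_{|x|\ge R}|x|^d|g_j|^q\,dx\le C M^{q-2}\Big(\sup_{|x|\ge R}|x|^{\,d-\frac{(2n-2+b)(q-2)}{4}}\Big)\,\|g_j\|_2^2.
\]
The supremum equals $R^{\,d-\frac{(2n-2+b)(q-2)}{4}}$ and tends to $0$ as $R\to\infty$ exactly when the exponent is negative, i.e. when $q>\frac{4d}{2n-2+b}+2$. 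This is precisely the lower endpoint in part~(2); when $d<0$ the same expression is smaller than $2$, so the hypothesis $q>2$ of part~(1) already guarantees decay of the tail (a thin shell of fixed profile placed at radius $\rho\to\infty$ shows compactness genuinely fails below these thresholds, which is why they are sharp). Choosing $R$ large makes this term $<\varepsilon/3$ uniformly in $j$.

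It remains to handle the two bounded pieces. On the inner ball I use that $q$ is strictly below the critical exponent $q^\ast:=\frac{2n+2d}{n-2+b}$: H\"older's inequality on $B_r$ gives
\[
\int_{B_r}|x|^d|g_j|^q\,dx\le\Big(\int_{B_r}|x|^d|g_j|^{q^\ast}dx\Big)^{q/q^\ast}\Big(\int_{B_r}|x|^d\,dx\Big)^{1-q/q^\ast},
\]
where the first factor is bounded by $CM^q$ through the continuous critical embedding supplied by the Caffarelli--Kohn--Nirenberg inequality (Lemma~\ref{lem3.3}), and the second equals $C\,r^{(d+n)(1-q/q^\ast)}\to0$ since $d+n>0$; hence this piece is $<\varepsilon/3$ for $r$ small, uniformly in $j$. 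Finally, on the fixed annulus $B_R\setminus B_r$ the weights $|x|^b$ and $|x|^d$ are bounded above and below by positive constants, so $\|\cdot\|_{W_b^{1,2}}$ is equivalent to the ordinary $H^1$ norm and $L^q(|x|^d\,dx)$ to $L^q$; the compact embedding on bounded sets (Lemma~\ref{lem3}, or plain Rellich--Kondrachov) together with $g_j\rightharpoonup0$ yields $\int_{B_R\setminus B_r}|x|^d|g_j|^q\,dx\to0$, so this term is $<\varepsilon/3$ for $j$ large. Combining the three estimates gives $\limsup_j\int|x|^d|g_j|^q\,dx\le\varepsilon$, and letting $\varepsilon\to0$ proves the claim.

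The main obstacle is pinning down the sharp lower endpoints and the case split between $d<0$ and $d\ge0$: the whole strength of the statement is encoded in the infinity-tail estimate, where the decay rate $\frac{2n-2+b}{4}$ of Lemma~\ref{lem3.1} must be balanced against the growth $|x|^d$ of the weight, and it is this balance---not the (routine) inner-ball and annulus estimates---that produces the threshold $\frac{4d}{2n-2+b}+2$. A secondary technical point is to verify the continuous critical embedding $W_b^{1,2}\hookrightarrow L^{q^\ast}(|x|^d\,dx)$ on all of $\mathbb{R}^n$ in the full range $d\ge b-2$, for which one must choose the exponents in Lemma~\ref{lem3.3} so that the dimensional balance condition holds; the upper endpoint $q^\ast$ itself is the scale-invariant exponent at which a bump concentrating at the origin prevents compactness.
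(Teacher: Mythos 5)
The paper does not prove this lemma at all --- it is quoted verbatim from Su--Wang \cite{SW} --- so there is no internal proof to compare against; your three-region splitting (Strauss decay at infinity, compactness on a fixed annulus, a smallness estimate near the origin) is indeed the standard route, and your tail estimate is correct and correctly identifies why the lower thresholds $q>2$ (for $d<0$) and $q>\frac{4d}{2n-2+b}+2$ (for $d\ge0$) arise from balancing the decay rate of Lemma \ref{lem3.1} against the weight $|x|^d$. (Note only that Lemma \ref{lem3.1} requires $b\ge 2-2n$, a hypothesis absent from the statement of Lemma \ref{lem1}, so strictly you are proving the lemma in that sub-range.)

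There is, however, a genuine gap in the inner-ball step of part (2). You bound $\int_{B_r}|x|^d|g_j|^{q^\ast}dx$ by the ``continuous critical embedding supplied by Lemma \ref{lem3.3}'', but the Caffarelli--Kohn--Nirenberg inequality carries the constraint $\alpha\le(1-a)\beta$; interpolating against $\|\nabla f\|_{b,2}$ and $\|f\|_2$ (so $\beta=0$) forces the left-hand weight exponent $\alpha=d/q^\ast$ to be nonpositive. Thus Lemma \ref{lem3.3} yields the critical embedding $W_b^{1,2}\hookrightarrow L^{q^\ast}(|x|^d dx)$ only for $d\le0$, i.e.\ it covers part (1) but not part (2). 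This is not a technicality: for $d\ge0$ and $b\le 0$ the inequality $\int_{B_1}|x|^d|f|^{q}dx\lesssim\|f\|_{W_b^{1,2}}^{q}$ is \emph{false} for general functions once $q>\frac{2n}{n-2}$ (concentrate a bump at a point $x_0$ with $|x_0|=\tfrac12$: the $W_b^{1,2}$ norm stays bounded while the integral blows up), and the admissible range of $q$ in part (2) does reach above $\frac{2n}{n-2}$. So radial symmetry must be used near the origin as well, not only at infinity; the standard fix (and what \cite{SW} does) is the near-origin radial pointwise bound $|u(r)|\le C r^{-\frac{n-2+b}{2}}\|u\|_{W_b^{1,2}}$ for $r\le1$, which gives $\int_{B_r}|x|^d|g_j|^qdx\lesssim M^q\, r^{\,n+d-\frac{(n-2+b)q}{2}}\to0$ precisely when $q<q^\ast$. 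The same remark infects your annulus step for part (2): plain Rellich--Kondrachov gives $L^q$-compactness on $B_R\setminus B_r$ only for $q<\frac{2n}{n-2}$, and Lemma \ref{lem3} requires $c\le\frac{nb}{n-2}$, which fails for $d\ge0$, $b\le0$; here the repair is easy, since Lemma \ref{lem3.1} gives a uniform $L^\infty$ bound for $g_j$ on the annulus and one then only needs $L^2$-compactness there. With these two radial ingredients inserted, your argument closes.
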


Together with the radial Gagliardo-Nirenberg inequality \eqref{d21}, it will allow us to derive Theorem \ref{thm1.1} as follows.

\textbf{Proof of Theorem \ref{thm1.1}.} Let $\{t_n\}_{n\in\Bbb{N}}$ be an arbitrary time sequence such that $t_n\rightarrow T^\ast$ as $n\rightarrow\infty$. We define
\[\rho_n:=\rho(t_n)=\frac{\|\nabla Q\|_{b,2}}{\|\nabla u(t_n)\|_{b,2}}\]
and
\[v_n(x):=v(x,t_n)=\rho_n^{\frac{n}{2-b}}u(\rho_n^{\frac{2}{2-b}}x, t).\]
Thus the scaling lower bound \eqref{c8} implies $\rho_n\rightarrow0$ as $n\rightarrow\infty$. In this case, for all $n\in\Bbb{N}$, we easily get
\begin{equation}\nonumber
\|v_n\|_2=\|u_0\|_2, \quad\quad\quad \|\nabla v_n\|_{b,2}=\|\nabla Q\|_{b,2}.
\end{equation}
Since $p=p_{b,\ast}$, one can see that
\begin{equation}\nonumber
E(v_n)=\frac{1}{2}\|\nabla v_n\|^2_{b,2}-\frac{1}{p_{b,\ast}+2}\|v_n\|^{p_{b,\ast}+2}_{c,p_{b,\ast}+2}=\rho_n^2E(u(t_n)),
\end{equation}
so $\{v_n\}_{n\in\Bbb{N}}$ is a bounded sequence in $W_b^{1,2}$. Moreover, $\rho_n\rightarrow0$ as $n\rightarrow\infty$, and the energy conservation \eqref{c10} yield
\[\lim_{n\rightarrow\infty}E(v_n)=0.\]
Thus, by Lemma \ref{lem1}, there exists a radial function $v^\ast\in L^q(|x|^ddx)$, up to a subsequence, such that
\begin{equation}\nonumber
v_n\rightharpoonup v^\ast \quad\quad \mbox{in}\ \ W_{b}^{1,2},
\end{equation}
and
\begin{equation}\label{c3}
\quad\quad v_n\rightarrow v^\ast \quad\quad \mbox{in}\ \  L^q(|x|^ddx)
\end{equation}
are valid for one of the following conditions:
\begin{eqnarray}\nonumber
&&\ (i)\ \  b-2\leq d<0:\quad\quad 2<q<\frac{2d+2n}{n-2+b},\\\nonumber
&&\ (ii)\ \ d\geq0:\quad\quad\quad\ \frac{4d}{2n-2+b}+2<q<\frac{2d+2n}{n-2+b}.\quad\quad\quad\quad\quad\quad\quad\quad\quad\quad
\end{eqnarray}
Picking $d=c$ in \eqref{c3}, by hypothesis of $c$, one can easily verify that the real number $p_{b,\ast}+2$ is within the range of $q$ in (i) or (ii). So we have
\begin{equation}\label{e1}
\lim_{n\rightarrow\infty}\int |x|^{c}|v_n|^{p_{b,\ast}+2}dx=\int |x|^{c}|v^\ast|^{p_{b,\ast}+2}dx.
\end{equation}
Hence, applying the radial Gagliardo-Nirenberg inequality \eqref{d21} with $p=p_{b,\ast}$, we deduce from \eqref{e1} that
\begin{eqnarray}\nonumber
\lefteqn{0=\lim_{n\rightarrow\infty}E(v_n)\geq \frac{1}{2}\|\nabla
v^\ast\|^2_{b,2}-\frac{C_{GN}}{p_{b,\ast}+2}\|\nabla v^\ast\|^{2}_{b,2}\|v^\ast\|^{p}_2}\\\nonumber
&&\quad\quad\quad\quad\quad=\frac{1}{2}\|\nabla v^\ast\|^2_{b,2}\left(1-\frac{\|v^\ast\|^{p}_2}{\|Q\|^{p}_2}\right),\quad\quad\quad\quad\quad
\end{eqnarray}
which leads to $\|v^\ast\|_2\geq \|Q\|_2$. Now, for any $A>0$, we obtain
\begin{eqnarray}\nonumber
\lefteqn{\liminf_{n\rightarrow\infty}\int_{|y|\leq A\rho_n^{\frac{2}{2-b}}} |u(y, t_n)|^2dy=\liminf_{n\rightarrow\infty}\int_{|x|\leq A} \rho_n^{\frac{2n}{2-b}}|u(\rho_n^{\frac{2}{2-b}}x, t_n)|^2dx}\\\nonumber
&&\quad\quad\quad\quad\quad\quad\quad\quad\quad\quad\quad\ =\liminf_{n\rightarrow\infty}\int_{|x|\leq A} |v_n(x)|^2dx\geq\int_{|x|\leq A} |v^\ast|^2dx.\quad\quad
\end{eqnarray}
Noting that
\[\frac{\lambda(t_n)}{\rho_n^{\frac{2}{2-b}}}=\frac{\lambda(t_n)\|\nabla u(t_n)\|_{b,2}^{\frac{2}{2-b}}}{\|\nabla Q\|_{b,2}^{\frac{2}{2-b}}}\rightarrow\infty\quad \mbox{as}\ \
n\rightarrow\infty,\]
we then get
\[\liminf_{n\rightarrow\infty}\int_{|x|\leq\lambda(t_n)} |u(x, t_n)|^2dx\geq\int_{|x|\leq A} |Q|^2dx,\]
for any $A>0$. Since $\{t_n\}$ is arbitrary, we deduce that
\[\liminf_{t\rightarrow T^\ast}\int_{|x|\leq\lambda(t)} |u(x, t)|^2dx\geq\|Q\|^2_2.\]
This arrives at \eqref{c7}.

Next, we turn to the proof of part (2) by contradiction. Assume that there exists a sequence $\{t_n\}\subset[0, T^\ast)$ with $t_n\rightarrow T^\ast$ as $n\rightarrow\infty$, such that $u(\cdot,t_n)$ converges strongly in $L^2$. By the energy conservation \eqref{c10}, for any $n, m>1$, we get
\begin{eqnarray}\nonumber
\lefteqn{\frac{1}{2}\|\nabla u(t_n)\|_{b,2}^{2}\leq\frac{1}{p_{b,\ast}+2}\|u(t_n)\|_{c,p_{b,\ast}+2}^{p_{b,\ast}+2}+|E(u_0)|}\\\label{c27}
&&\quad\quad\quad\quad\leq C\|u(t_n)-u(t_m)\|_{c,p_{b,\ast}+2}^{p_{b,\ast}+2}+C\|u(t_m)\|_{c,p_{b,\ast}+2}^{p_{b,\ast}+2}+|E(u_0)|,\quad\quad\
\end{eqnarray}
where the constant $C$ depends on $\textbf{p}_c$. Observe that the Gagliardo-Nirenberg inequality \eqref{d21} implies
\begin{eqnarray}\nonumber
\lefteqn{\|u(t_n)-u(t_m)\|_{c,p_{b,\ast}+2}^{p_{b,\ast}+2}\leq C\|\nabla u(t_n)-\nabla u(t_m)\|_{b,2}^{2}\|u(t_n)-u(t_m)\|_{2}^{p_{b,\ast}}}\\\label{c28}
&&\quad\quad\quad\quad\quad\quad\quad\quad\ \leq C\|u(t_n)-u(t_m)\|_{2}^{p_{b,\ast}}\|\nabla u(t_n)\|_{b,2}^2+C_m.\quad
\end{eqnarray}
Thus, plugging \eqref{c28} into \eqref{c27}, we have
\begin{equation}\nonumber
\frac{1}{2}\|\nabla u(t_n)\|_{b,2}^{2}\leq C\|u(t_n)-u(t_m)\|_{2}^{p_{b,\ast}}\|\nabla u(t_n)\|_{b,2}^2+C_m.
\end{equation}
Since $u(\cdot,t_n)$ converges strongly in $L^2$, for any $n$ large enough and some large number $m_0$, we have $C\|u(t_n)-u(t_m)\|_{2}^{p_{b,\ast}}<\frac{1}{4}$. Hence we obtain $\|\nabla u(t_n)\|_{b,2}\leq C_{m_0}$, where $C_{m_0}$ is independent of $n$, which contradicts the fact that $u(t)$ blows up at $T^\ast$. The proof of Theorem \ref{thm1.1} is completed.

\section{Two types of Gagliardo-Nirenberg inequalities}
In this section, we will establish two type of Gagliardo-Nirenberg inequalities for general functions in various spaces.

To begin with, we recall the standard Gagliardo-Nirenberg inequality
\begin{equation}\label{f63}
\int|x|^c|f|^{p+2}dx\leq C\|\nabla f\|_{b,2}^{\frac{\textbf{p}_c}{2-b}}\|f\|_2^{p+2-\frac{\textbf{p}_c}{2-b}},\quad\quad \forall\ f\in W^{1,2}_b,
\end{equation}
which occurred in Remark \ref{rem1}.

For our purpose, we first prove an improved version of \eqref{f63} for general functions in $\dot{W}_b^{1,2}\cap L^{\sigma_0}(|x|^\gamma dx)$
\begin{equation}\nonumber
\int|x|^c|f|^{p+2}dx\leq C_{GN}\|\nabla f\|_{b,2}^2\|f\|_{\gamma,\sigma_0}^p,
\end{equation}
which will be given in Section \ref{sec1}, and extends the one obtained by \cite{CC} for the INLS equation to the dINLS case with $b>2-n$.

In Section \ref{sec2}, we will develop a refined Gagliardo-Nirenberg inequality for general functions in $\dot{H}^{s_c}\cap\dot{W}_b^{1,2}$, which is based on the following scaling invariant Morrey-Campanato type functional $\rho$ used in \cite{MR}, defined by
\begin{equation}\label{f26}
\rho(u, R)=\sup_{R'\geq R}\frac{1}{(R')^{2s_c}}\int_{R'\leq|x|\leq 2R'}|u|^2dx,
\end{equation}
replacing the role of the $L^2$ norm in \eqref{f63}. We point out that no radial symmetry is needed to obtain these inequalities, which are essential to the proof of Theorem \ref{thm2}.

\subsection{Gagliardo-Nirenberg inequality in $\dot{W}_b^{1,2}\cap L^{\sigma_0}(|x|^\gamma dx)$}\label{sec1}
In the spirit of Weinstein's \cite{MW2}, we establish the following sharp Gagliardo-Nirenberg inequality in $\dot{W}_b^{1,2}\cap L^{\sigma_0}(|x|^\gamma dx)$ by the variational method and obtain the best constant for this inequality.
\begin{lemma}\label{pro1}
Let  $2-n<b<2,\ b-2<c\leq\frac{nb}{n-2},\ \gamma>-n$ and $2(2-b)<\textbf{p}_c<(2-b)(p+2)$. Then for any $f\in \dot{W}_b^{1,2}\cap L^{\sigma_0}(|x|^\gamma dx)$ with $\sigma_0,\ \gamma$ defined as \eqref{f64}, we have
\begin{equation}\label{d15}
\int|x|^c|f|^{p+2}dx\leq C_{GN}\|\nabla f\|_{b,2}^2\|f\|_{\gamma,\sigma_0}^p,
\end{equation}
where the best constant $C_{GN}=\frac{p+2}{2\|\mathcal{Q}\|_{\gamma,\sigma_0}^p}$, and $\mathcal{Q}$ is the solution to the following elliptic equation
\begin{equation}\label{d16}
\nabla\cdot(|x|^b\nabla \mathcal{Q})+|x|^c|\mathcal{Q}|^{p}\mathcal{Q}-|x|^\gamma|\mathcal{Q}|^{\sigma_0-2}\mathcal{Q}=0
\end{equation}
with minimal $L^{\sigma_0}(|x|^\gamma dx)$-norm.
\end{lemma}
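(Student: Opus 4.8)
The plan is to follow Weinstein's variational strategy, identifying the best constant $C_{GN}$ with the reciprocal of a minimized Weinstein functional and characterizing the optimizer as the ground-state solution $\mathcal{Q}$ of \eqref{d16}. First I would introduce the scaling-invariant functional
\[
J(f)=\frac{\|\nabla f\|_{b,2}^2\,\|f\|_{\gamma,\sigma_0}^p}{\int|x|^c|f|^{p+2}dx},
\]
defined for $f\in\dot{W}_b^{1,2}\cap L^{\sigma_0}(|x|^\gamma dx)$, $f\neq0$, and set $I:=\inf J(f)$. Proving \eqref{d15} is then equivalent to showing $I>0$ and $I=1/C_{GN}$, so the crux is to show the infimum is attained and to relate the minimizer to \eqref{d16}. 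A direct check confirms that $J$ is invariant under both the scaling $f\mapsto f(\lambda\cdot)$ and the dilation $f\mapsto\mu f$; the exponents $\sigma_0=\frac{(n+\gamma)p}{2-b+c}$ and the hypothesis $2(2-b)<\textbf{p}_c<(2-b)(p+2)$ are exactly what makes the three homogeneities match, which is why \eqref{d16} comes out with the stated weights.

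The heart of the argument is the existence of a minimizer. I would take a minimizing sequence $\{f_k\}$ with $J(f_k)\to I$, and use the two invariances to normalize, say, $\|\nabla f_k\|_{b,2}=1$ and $\|f_k\|_{\gamma,\sigma_0}=1$, so that $\{f_k\}$ is bounded in $\dot{W}_b^{1,2}\cap L^{\sigma_0}(|x|^\gamma dx)$ and the denominator $\int|x|^c|f_k|^{p+2}dx$ stays bounded below. Passing to a weak limit $f^\ast$, the key input is the compact embedding
\[
\dot{W}_b^{1,2}\cap L^{\sigma_0}(|x|^\gamma dx)\hookrightarrow L^{p+2}(|x|^c\,dx)
\]
from Lemma \ref{lem4}, applied with $\omega(x)=|x|^c$ (whose admissibility under \eqref{f68} is immediate). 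Compactness forces $\int|x|^c|f_k|^{p+2}dx\to\int|x|^c|f^\ast|^{p+2}dx$, while weak lower semicontinuity of the two norms in the numerator gives $J(f^\ast)\le\liminf J(f_k)=I$; since $f^\ast\neq0$ (its $L^{p+2}(|x|^c dx)$ norm is bounded below), $f^\ast$ is a minimizer.

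Once the minimizer exists, I would derive the Euler–Lagrange equation by computing $\frac{d}{d\varepsilon}J(f^\ast+\varepsilon\varphi)\big|_{\varepsilon=0}=0$ for all test $\varphi$; after clearing the homogeneities this yields \eqref{d16} up to rescaling in the two invariance parameters, and a standard two-parameter rescaling $\mathcal{Q}(x)=a\,f^\ast(b x)$ normalizes the coefficients to those displayed. Finally, evaluating $J$ at $\mathcal{Q}$ pins down the constant: testing \eqref{d16} against $\mathcal{Q}$ and using the Pohozaev-type identity obtained by pairing \eqref{d16} with $x\cdot\nabla\mathcal{Q}$ gives the two relations
\[
\|\nabla\mathcal{Q}\|_{b,2}^2=\frac{2}{p+2}\int|x|^c|\mathcal{Q}|^{p+2}dx,
\qquad
\|\mathcal{Q}\|_{\gamma,\sigma_0}^{\sigma_0}=\frac{p}{p+2}\int|x|^c|\mathcal{Q}|^{p+2}dx,
\]
which combine to give $I=J(\mathcal{Q})=\frac{2\|\mathcal{Q}\|_{\gamma,\sigma_0}^p}{p+2}$, i.e. $C_{GN}=\frac{p+2}{2\|\mathcal{Q}\|_{\gamma,\sigma_0}^p}$. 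I expect the main obstacle to be the existence step: because $|x|^b$ and $|x|^c$ are singular/unbounded, standard Sobolev compactness fails, and the whole argument hinges on Lemma \ref{lem4} providing compactness on all of $\mathbb{R}^n$ (not merely on bounded annuli); verifying that the normalization survives the weak limit—that no mass escapes to the origin or infinity—is precisely what that lemma is designed to rule out.
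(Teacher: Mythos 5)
Your proposal is correct and follows essentially the same route as the paper: the Weinstein functional, normalization of the minimizing sequence via the two-parameter scaling invariance, compactness from Lemma \ref{lem4} to pass to the limit, the Euler--Lagrange equation, and a rescaling to reach \eqref{d16}. The only (immaterial) difference is at the very end, where you extract $J_\ast=\frac{2}{p+2}\|\mathcal{Q}\|_{\gamma,\sigma_0}^p$ from the Pohozaev-type identities, whereas the paper reads off $\|\mathcal{Q}\|_{\gamma,\sigma_0}$ directly from the explicit rescaling parameters $\alpha,\beta$.
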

\begin{proof}
We define the variational problem
\begin{equation}\label{a20}
J_\ast=\inf\{J(f):\ f\in \dot{W}_b^{1,2}\cap L^{\sigma_0}(|x|^\gamma dx)\},
\end{equation}
where the Weinstein functional $J(f)$ is defined by
\[J(f)=\frac{\|\nabla f\|_{b,2}^2\|f\|_{\gamma,\sigma_0}^p}{\|f\|_{c,p+2}^{p+2}}.\]

First, we claim that the minimizer of variational problem $J(f)$ can be attained.

Indeed, picking $q=p,\ d=c$, and $\sigma_1=\sigma_0$, $\gamma_1=\gamma$ in the inequality \eqref{f67}, we obtain
\begin{equation}\label{d13}
\int|x|^c |f|^{p+2}dx\leq C\|\nabla f\|_{b,2}^2\|f\|_{\gamma,\sigma_0}^p
\end{equation}
for all $p<\frac{2(2-b+c)}{n-2+b}$ and $b>2-n$, $\gamma>-n$, $c>2-b$. So, the variational problem \eqref{a20} is well-defined.

Now, choosing a minimizing sequence $\{f_n\}_{n\in\Bbb{N}}\subset \dot{W}_b^{1,2}\cap L^{\sigma_0}(|x|^\gamma dx)$ in \eqref{a20} such that
\begin{equation}\nonumber
\lim_{n\rightarrow\infty}J(f_n)=J_\ast.
\end{equation}
We rescale $\{f_n\}_{n\in\Bbb{N}}$ by setting
\[g_n(x)=\mu_n f_n(\lambda_n x),\] with
\[\mu_n=\frac{\|f_n\|_{\gamma,\sigma_0}^{\frac{n-2+b}{2-b-2s_c}}}{\|\nabla f_n\|_{b,2}^{\frac{2(2-b+c)}{p(2-b-2s_c)}}},\quad\quad \lambda_n=\left(\frac{\|f_n\|_{\gamma,\sigma_0}}{\|\nabla f_n\|_{b,2}}\right)^{\frac{2}{2-b-2s_c}},\]
so that
\begin{equation}\label{a25}
\|g_n\|_{\gamma,\sigma_0}=1\quad\quad\mbox{and}\quad\quad \|\nabla g_n\|_{b,2}=1.
\end{equation}
Moreover, since $J$ is invariant under this scaling, $\{g_n\}_{n\in\Bbb{N}}$ is also a minimizing sequence of \eqref{a20}, which is bounded in $\dot{W}_b^{1,2}\cap L^{\sigma_0}(|x|^\gamma dx)$.
Thus, up to a subsequence, there exists a function $\widetilde{g}\in \dot{W}_b^{1,2}\cap L^{\sigma_0}(|x|^\gamma dx)$ such that
\begin{equation}\nonumber
g_n\rightharpoonup \widetilde{g}\quad \mbox{weakly\ in\ } \dot{W}_b^{1,2}\cap L^{\sigma_0}(|x|^\gamma dx).
\end{equation}
It follows from \eqref{a25} that
\begin{equation}\label{a26}
\|\widetilde{g}\|_{\gamma,\sigma_0}\leq1 \quad\quad\mbox{and}\quad\quad \|\nabla \widetilde{g}\|_{b,2}\leq1.
\end{equation}
Meanwhile, invoking the compactness result in Lemma \ref{lem4}, we also deduce that
\begin{equation}\label{a27}
g_n\rightarrow \widetilde{g}\quad \mbox{strongly\ in\ } L^{p+2}(|x|^cdx).
\end{equation}
Therefore, collecting \eqref{a26} and \eqref{a27}, we obtain
\begin{equation}\nonumber
J_\ast\leq J(\widetilde{g})\leq\frac{1}{\|\widetilde{g}\|_{c,p+2}^{p+2}}=\lim_{n\rightarrow\infty}\frac{1}{\|g_n\|_{c,p+2}^{p+2}}=J_\ast.
\end{equation}
Consequently, we get
\begin{equation}\nonumber
\|\widetilde{g}\|_{\gamma,\sigma_0}=1,\quad \|\nabla \widetilde{g}\|_{b,2}=1\quad \mbox{and}\quad J_\ast=\frac{1}{\|\widetilde{g}\|_{c,p+2}^{p+2}}.
\end{equation}
Hence, $\widetilde{g}$ is a minimizer for the functional $J$.

Next, we claim that the best constant of \eqref{d13} is
\begin{equation}\label{a2}
C_{GN}=\frac{p+2}{2\|\mathcal{Q}\|_{\gamma,\sigma_0}^p}.
\end{equation}

In fact, since $\widetilde{g}$ is the minimizer of the functional $J$, then $\widetilde{g}$ is a solution to the following Euler-Lagrange equation
\begin{equation}\label{d14}
\frac{d}{d\epsilon}J(\widetilde{g}+\epsilon \varphi)|_{\epsilon=0}=0\quad\quad \mbox{for\  all\ }\varphi\in C_0^\infty(\Bbb{R}^n).
\end{equation}
A simple computation shows that
\[\frac{d}{d\epsilon}\|\nabla(\widetilde{g}+\epsilon\varphi)\|_{b,2}^2|_{\epsilon=0}=2\textmd{Re}\langle-\nabla\cdot(|x|^b\nabla\widetilde{g}),\ \varphi\rangle,\]
\[\frac{d}{d\epsilon}\|\widetilde{g}+\epsilon\varphi\|_{c,p+2}^{p+2}|_{\epsilon=0}=(p+2)\textmd{Re}\langle|x|^c|\widetilde{g}|^p\widetilde{g},\ \varphi\rangle,\]
and
\begin{eqnarray}\nonumber
\lefteqn{\frac{d}{d\epsilon}\|\widetilde{g}+\epsilon\varphi\|_{\gamma,\sigma_0}^p|_{\epsilon=0}=\frac{p}{\sigma_0}\|\widetilde{g}\|_{\gamma,\sigma_0}^{p-\sigma_0}\frac{d}{d\epsilon}\|\widetilde{g}+\epsilon\varphi\|_{\gamma,\sigma_0}^{\sigma_0}|_{\epsilon=0}}\\\nonumber
&&\quad\quad\quad\quad\quad\quad\ \
 =p\|\widetilde{g}\|_{\gamma,\sigma_0}^{p-\sigma_0}\textmd{Re}\langle|x|^\gamma|\widetilde{g}|^{\sigma_0-2}\widetilde{g},\ \varphi\rangle,
\end{eqnarray}
where $\langle\cdot,\ \cdot\rangle$ is the inner product in $L^2$. Then it follows from \eqref{d14} that
\begin{eqnarray}\nonumber
\lefteqn{-2\textrm{Re}\int\nabla\cdot(|x|^b\nabla \widetilde{g})\overline{\varphi}dx-(p+2)J_\ast\textrm{Re}\int |x|^c|\widetilde{g}|^p\widetilde{g}\overline{\varphi}dx}\\\nonumber
&&\quad\quad\quad+p\textrm{Re}\int|x|^\gamma|\widetilde{g}|^{\sigma_0-2}\widetilde{g}\overline{\varphi}dx=0,\quad\quad\quad\quad\quad\quad\quad
\end{eqnarray}
which implies that
\begin{equation}\nonumber
\nabla\cdot(|x|^b\nabla \widetilde{g})-\frac{p}{2}|x|^\gamma|\widetilde{g}|^{\sigma_0-2}\widetilde{g}+\frac{p+2}{2}J_\ast|x|^c|\widetilde{g}|^p\widetilde{g}=0.
\end{equation}

Now we choose $\mathcal{Q}$ and define $\widetilde{g}(x)=\alpha \mathcal{Q}(\beta x),$ with
\[\alpha=\left(\frac{p}{2}\right)^{-\frac{n+\gamma}{\Lambda}}\left(\frac{(p+2)J_\ast}{2}\right)^{\frac{(\gamma+2-b)(n+\gamma)}{(2-b+c)\Lambda}}\]
and \[\beta=\left(\frac{p}{2}\right)^{\frac{\Lambda-(\sigma_0-2)(n+\gamma)}{(\gamma+2-b)\Lambda}}\left(\frac{(p+2)J_\ast}{2}\right)^{\frac{(\sigma_0-2)(n+\gamma)}{(2-b+c)\Lambda}},\]
where $\Lambda:=(n+\gamma)(\sigma_0-2)-(\gamma+2-b)\sigma_0$. It can be calculated that $\mathcal{Q}$ solves the following equation
\begin{equation}\nonumber
\nabla\cdot(|x|^b\nabla \mathcal{Q})-|x|^\gamma|\mathcal{Q}|^{\sigma_0-2}\mathcal{Q}+|x|^c|\mathcal{Q}|^p\mathcal{Q}=0
\end{equation}
and
\[\|\mathcal{Q}\|_{\gamma, \sigma_0}^{\sigma_0}=\left(\frac{p+2}{2}J_\ast\right)^{\frac{n+\gamma}{2-b+c}}.\]
This implies that
\begin{equation}\nonumber
J_\ast=\frac{2\|\mathcal{Q}\|_{\gamma,\sigma_0}^p}{p+2},
\end{equation}
which arrives at \eqref{a2}. Hence, by the definition of $J_\ast$, we have for all $ f\in \dot{W}_b^{1,2}\cap L^{\sigma_0}(|x|^\gamma dx)$,
\[\frac{2\|\mathcal{Q}\|_{\gamma,\sigma_0}^p}{p+2}=J_\ast\leq\frac{\|\nabla f\|_{b,2}^2\|f\|_{\gamma,\sigma_0}^p}{\int|x|^c|f|^{p+2}dx}.\]
The proof of Proposition \ref{pro1} is completed.
\end{proof}
\begin{rem}
(1) In the limiting case $\textbf{p}_c=2(2-b)$, the best constant in Lemma \ref{pro1} was already established in \cite{ZOZ} for radial symmetric functions in $W_b^{1,2}$, in the case $b\leq0$.

(2) Although the uniqueness of solutions for the elliptic equation \eqref{d16} is still unclear, it will not be an issue to our purpose, since the sharp constant in Lemma  \ref{pro1} depends only on the $L^{\sigma_0}(|x|^\gamma dx)$-norm of the solution $\mathcal{Q}$.
\end{rem}

\subsection{Gagliardo-Nirenberg inequality in $\dot{H}^{s_c}\cap\dot{W}_b^{1,2}$}\label{sec2}
In this part, we will prove a crucial Gagliardo-Nirenberg inequality for general non-radial functions in $\dot{H}^{s_c}\cap\dot{W}_b^{1,2}$.
\begin{lemma}\label{lem5.1}
Let $n\geq3,\ 2-n<b<2$, $b-2<c<\frac{nbp}{4}$, $2(2-b)<\textbf{p}_c<(2-b)(p+2)$ and $p<\frac{4}{n}$. If $\sigma_0$ is defined as \eqref{f64} with $\gamma>-n$, Then

(i) There exists a universal constant $C>0$ such that for all $u\in L^{\sigma_0}(|x|^\gamma dx)$,
\begin{equation}\label{f1}
\frac{1}{R^{2s_c}}\int_{|x|\leq R}|u|^2dx\leq C\|u\|^2_{\gamma, \sigma_0},\quad \forall\ R>0
\end{equation}
and
\begin{equation}\label{f3}
\lim_{R\rightarrow+\infty}\frac{1}{R^{2s_c}}\int_{|x|\leq R}|u|^2dx=0.
\end{equation}

(ii) For any $\eta>0$, there exists a constant $C_\eta>0$ such that for all $u\in \dot{H}^{s_c}\cap\dot{W}_b^{1,2}$ and $R>0$, the following inequality
\begin{equation}\label{f5}
\int_{|x|\geq R}|x|^c|u|^{p+2}dx\leq \eta\|\nabla u\|_{b,2}^2+\frac{C_\eta}{R^{2-b-2s_c}}\rho(u, R)^{\frac{4-np+2p}{4-np}}
\end{equation}
holds, where $\rho(u, R)$ is defined as \eqref{f26}.
\end{lemma}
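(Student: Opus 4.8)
The plan is to handle the two parts by different elementary devices: part (i) by a single weighted Hölder inequality, and part (ii) by a dyadic decomposition of $\{|x|\ge R\}$ fed into a localized Gagliardo--Nirenberg interpolation whose $L^2$-mass is read off from the Morrey--Campanato functional $\rho$.

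For part (i), I would write $|u|^2=(|u|^{\sigma_0}|x|^\gamma)^{2/\sigma_0}\,|x|^{-2\gamma/\sigma_0}$ and apply Hölder on $\{|x|\le R\}$ with the conjugate exponents $\sigma_0/2$ and $\sigma_0/(\sigma_0-2)$, obtaining
\[\int_{|x|\le R}|u|^2\,dx\le\|u\|_{\gamma,\sigma_0}^2\Big(\int_{|x|\le R}|x|^{-\frac{2\gamma}{\sigma_0-2}}\,dx\Big)^{\frac{\sigma_0-2}{\sigma_0}}.\]
Since $\gamma\le0$ the weight is locally integrable, and a polar-coordinate computation gives the last factor as $CR^{(n(\sigma_0-2)-2\gamma)/\sigma_0}$. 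The only substantive point is to verify, using $\sigma_0=\frac{2(n+\gamma)}{n-2s_c}$ (equivalently \eqref{f64} together with \eqref{f71}), that this exponent is exactly $2s_c$; this yields \eqref{f1}. For \eqref{f3}, given $\varepsilon>0$ I pick $A$ with $\int_{|x|>A}|u|^{\sigma_0}|x|^\gamma\,dx<\varepsilon$, split $\{|x|\le R\}$ into $\{|x|\le A\}$ and $\{A\le|x|\le R\}$: the inner piece divided by $R^{2s_c}$ tends to $0$, and the outer piece is estimated exactly as above but retaining only the small tail of the $L^{\sigma_0}$-mass, giving $\limsup_{R\to\infty}R^{-2s_c}\int_{|x|\le R}|u|^2\,dx\le C\varepsilon^{2/\sigma_0}$.

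For part (ii), I decompose $\{|x|\ge R\}$ into the dyadic annuli $A_j=\{2^jR\le|x|\le 2^{j+1}R\}$, $j\ge0$, and estimate each $\int_{A_j}|x|^c|u|^{p+2}\,dx$ separately. On $A_j$ the weights are comparable to constants, $|x|^c\sim(2^jR)^c$ and $|x|^b\sim(2^jR)^b$, so after freezing them the task reduces to a local estimate of $\int_{A_j}|u|^{p+2}\,dx$. I obtain this by interpolating $\|u\|_{L^{p+2}(A_j)}$ between $L^2(A_j)$ and $L^{2^*}(A_j)$ (legitimate since $p<\tfrac4n<\tfrac{4}{n-2}$ forces $p+2<2^*=\tfrac{2n}{n-2}$) and controlling $\|u\|_{L^{2^*}(A_j)}$ by applying the Sobolev embedding $\dot W^{1,2}\hookrightarrow L^{2^*}$ to $\chi_j u$, where $\chi_j=1$ on $A_j$ is supported on a doubled annulus with $|\nabla\chi_j|\lesssim(2^jR)^{-1}$. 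Restoring the weights and using the crucial mass bound $\int_{A_j}|u|^2\le(2^jR)^{2s_c}\rho(u,R)$ (immediate from \eqref{f26}), I reach the per-annulus estimate
\[\int_{A_j}|x|^c|u|^{p+2}\,dx\le C(2^jR)^{\kappa}\,a_j^{np/4}\,\rho(u,R)^{\frac{4-(n-2)p}{4}},\qquad a_j:=\int_{A_j}|x|^b|\nabla u|^2\,dx,\]
with $\kappa=c-\tfrac{bnp}{4}+2s_c\tfrac{4-(n-2)p}{4}$. Because $p<\tfrac4n$ gives $np/4<1$, Young's inequality with exponents $\tfrac{4}{np},\tfrac{4}{4-np}$ splits $a_j^{np/4}$ into $\varepsilon a_j$ plus a remainder; summing in $j$ with $\sum_j a_j\le\|\nabla u\|_{b,2}^2$ produces the term $\eta\|\nabla u\|_{b,2}^2$, while the remainder is a geometric series $\sum_j(2^jR)^{4\kappa/(4-np)}$ that converges precisely because $\kappa<0$, i.e. because $2s_c<2-b$. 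The bookkeeping I would be most careful with is to check, via $2-b+c=\tfrac{p(n-2s_c)}{2}$, that $\tfrac{4\kappa}{4-np}=-(2-b-2s_c)$ and $\tfrac{4-(n-2)p}{4}\cdot\tfrac{4}{4-np}=\tfrac{4-np+2p}{4-np}$, which are exactly the $R$- and $\rho$-exponents in \eqref{f5}.

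The main obstacle I anticipate is the lower-order contribution generated by the cutoff gradient $u\nabla\chi_j$: since constants show that a purely homogeneous Gagliardo--Nirenberg inequality cannot hold on an annulus, this term is unavoidable and yields an extra contribution of the form $\tfrac{C}{R^{2-b-2s_c}}\rho(u,R)^{(p+2)/2}$. The saving feature, which I would verify by the same exponent computation, is that after dyadic summation this term carries exactly the same decay $R^{-(2-b-2s_c)}$ as the main one; since its $\rho$-power $\tfrac{p+2}{2}$ is strictly smaller than $\tfrac{4-np+2p}{4-np}$, it combines with the principal term, and keeping the larger power as in \eqref{f5} is justified in the normalized regime $\rho\gtrsim1$ relevant to the application (in any case one may state the bound with both powers present without affecting the proof of Theorem~\ref{thm2}). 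Confirming this clean cancellation of exponents, and that freezing $|x|^c$ and $|x|^b$ on each annulus costs only a multiplicative constant uniform in $j$, is the technical heart of the argument; everything else is interpolation and Young's inequality.
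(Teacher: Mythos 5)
Your proposal is correct in outline and follows the same skeleton as the paper's proof: part (i) by the identical weighted H\"older inequality on $\{|x|\le R\}$ (your tail argument for \eqref{f3}, which keeps only the small $L^{\sigma_0}(|x|^\gamma dx)$-mass outside a large ball, is in fact slightly more careful than the paper's, which informally invokes smallness of $\int_{|x|\ge A}|u|^2dx$), and part (ii) by dyadic annuli, an $L^2$--$L^{2^*}$ interpolation, the mass bound from \eqref{f26}, and Young's inequality with exponents $\tfrac{4}{np},\tfrac{4}{4-np}$. The one genuine divergence is the key embedding used on each annulus. You localize the Sobolev inequality with cutoffs $\chi_j$, which buys you localized gradient pieces $a_j$ with $\sum_j a_j\lesssim\|\nabla u\|_{b,2}^2$ (so the $\eta$-term sums for free) but forces the extra contribution $\tfrac{C}{R^{2-b-2s_c}}\rho(u,R)^{(p+2)/2}$ from $u\nabla\chi_j$; your exponent bookkeeping for this term checks out (using $2-b+c=\tfrac{p(n-2s_c)}{2}$ its $(2^jR)$-exponent is exactly $-(2-b-2s_c)$). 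The paper instead combines the weighted real-interpolation identity of \cite{DF} with the \emph{global} Hardy--Sobolev inequality \eqref{c26} (with $d=-b/2$), which produces the global factor $\|\nabla u\|_{b,2}^{np/2}$ on every annulus --- no cutoff, hence no extra term and the clean single-power bound \eqref{f5} --- at the price of having to peel a geometric factor $2^{-4\iota j/(np)}$ off the decaying power of $2^jR$ in order to sum the resulting $\widetilde\eta\|\nabla u\|_{b,2}^2$ contributions. Consequently your argument proves a marginally weaker statement than \eqref{f5}, namely with $\rho^{\frac{4-np+2p}{4-np}}+\rho^{\frac{p+2}{2}}$ on the right-hand side; you flag this honestly, and it is indeed harmless: the paper's own remark following the lemma records exactly such a two-power radial variant, and in every application the right-hand side is anyway enlarged to $M_\infty^{\frac{2(4-np+2p)}{4-np}}+M_\infty^2$, which dominates $M_\infty^{p+2}$ since $2<p+2<\tfrac{2(4-np+2p)}{4-np}$. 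So your route is a valid, more elementary alternative (it avoids the weighted interpolation lemma and uses only the classical Sobolev embedding), but if you want the lemma exactly as stated you should replace the cutoff step by the global inequality \eqref{c26} restricted to each annulus, as the paper does.
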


\begin{proof}
(i) By the H\"{o}lder inequality, we have
\begin{equation}\nonumber
\int_{|x|\leq R}|u|^2dx\leq (\int_{|x|\leq R}|x|^\gamma|u|^{\sigma_0}dx)^{\frac{2}{\sigma_0}}(\int_{|x|\leq R}|x|^{\frac{2\gamma}{2-\sigma_0}}dx)^{1-\frac{2}{\sigma_0}},
\end{equation}
which implies \eqref{f1}.

Let $R>A>1$, we split the above integral into two parts and use \eqref{f1} to obtain
\begin{eqnarray}\nonumber
\lefteqn{\frac{1}{R^{2s_c}}\int_{|x|\leq R}|u|^2dx\leq \frac{1}{R^{2s_c}}\int_{|x|\leq A}|u|^2dx+\int_{A\leq|x|\leq R}|u|^2dx}\\\label{f2}
&&\quad\quad\quad\quad\quad\quad\leq C\left(\frac{A}{R}\right)^{2s_c}\|u\|^2_{\gamma, \sigma_0}+\int_{A\leq|x|\leq R}|u|^2dx.
\end{eqnarray}
Given $\varepsilon>0$, choosing $R>A>1$ large enough such that
\[\int_{|x|\geq A}|u|^2dx<\frac{\varepsilon}{2},\]
which together with \eqref{f2} yields \eqref{f3}.

(ii) For each $j\in\Bbb{N}$, we set $\mathcal{C}_j=\{x\in\Bbb{R}:\ 2^jR\leq|x|\leq 2^{j+1}R\}$, then
\begin{equation}\label{f12}
\int_{|x|\geq R}|x|^c|u|^{p+2}dx=\sum_{j=0}^\infty\int_{\mathcal{C}_j}|x|^c|u|^{p+2}dx.
\end{equation}

Now we recall the following weighted interpolation inequality
\begin{equation}\nonumber
\left(L^{p_0}(w_0^{p_0}dx),\ L^{p_1}(w_1^{p_1}dx)\right)_{\vartheta, \widetilde{q}}=L^{\widetilde{q}}(w_0^{(1-\vartheta)\widetilde{q}}w_1^{\vartheta\widetilde{q}}dx),
\end{equation}
cited in \cite{DF}, where $\frac{1}{\widetilde{q}}=\frac{1-\vartheta}{p_0}+\frac{\vartheta}{p_1}$ and $0<\vartheta<1$, which can be applied to get
\begin{equation}\label{f9}
\int_{\mathcal{C}_j}|x|^{\frac{nbp}{4}}|u|^{p+2}dx\leq C(\int_{\mathcal{C}_j}|x|^{\frac{nb}{n-2}}|u|^{\frac{2n}{n-2}}dx)^{\frac{(n-2)p}{4}}(\int_{\mathcal{C}_j}|u|^2dx)^{\frac{4-np+2p}{4}}.
\end{equation}
Thus, by hypothesis and \eqref{f9}, we deduce that
\begin{eqnarray}\nonumber
\lefteqn{\int_{\mathcal{C}_j}|x|^c|u|^{p+2}dx\leq\frac{1}{(2^jR)^{\frac{nbp}{4}-c}}\int_{\mathcal{C}_j}|x|^{\frac{nbp}{4}}|u|^{p+2}dx}\\\nonumber
&&\quad\quad\quad\quad\quad\leq \frac{C}{(2^jR)^{\frac{nbp}{4}-c}}(\int_{\mathcal{C}_j}|x|^{\frac{nb}{n-2}}|u|^{\frac{2n}{n-2}}dx)^{\frac{(n-2)p}{4}}(\int_{\mathcal{C}_j}|u|^2dx)^{\frac{4-np+2p}{4}}.
\end{eqnarray}
Applying the Hardy-Sobolev inequality \eqref{c26} to the last inequality, we find that
\begin{eqnarray}\nonumber
\lefteqn{\int_{\mathcal{C}_j}|x|^c|u|^{p+2}dx\leq\frac{C_b}{(2^jR)^{\frac{nbp}{4}-c}}\|\nabla u\|_{b,2}^{\frac{np}{2}}(\int_{\mathcal{C}_j}|u|^2dx)^{\frac{4-np+2p}{4}}}\\\nonumber
&&\quad\quad\quad\quad\quad\leq C_b\|\nabla u\|_{b,2}^{\frac{np}{2}}\frac{1}{(2^jR)^{\frac{npb}{4}-c-2s_c(1+\frac{2p-np}{4})}}\rho(u, 2^jR)^{\frac{4-np+2p}{4}},\quad
\end{eqnarray}
where
\[\frac{npb}{4}-c-2s_c(1+\frac{2p-np}{4})=\frac{4-np}{4}(2-b-2s_c)>0,\]
which holds for all $s_c<\frac{2-b}{2}$ and $p<\frac{4}{n}$.

Now let $\iota\in (0,\ \frac{4-np}{4}(2-b-2s_c))$, by the Young inequality, there exists an $\widetilde{\eta}>0$ such that
\begin{equation}\label{f13}
\int_{\mathcal{C}_j}|x|^c|u|^{p+2}dx\leq\frac{\widetilde{\eta}}{2^{\frac{4\iota j}{np}}}\|\nabla u\|_{b,2}^{2}+\frac{C_{\widetilde{\eta}}}{(2^jR)^{2-b-2s_c}2^{\frac{-4\iota j}{4-np}}}\rho(u, 2^jR)^{\frac{4-np+2p}{4-np}}.
\end{equation}
Since $\rho(u, R)$ is non-increasing in $R$, we have $\rho(u, 2^jR)\leq \rho(u, R)$ for any $j\in\Bbb{R}$.
So, combining \eqref{f12} with \eqref{f13}, we obtain
\begin{eqnarray}\nonumber
\lefteqn{\int_{|x|\geq R}|x|^c|u|^{p+2}dx\leq \widetilde{\eta}\|\nabla u\|_{b,2}^{2}\sum_{j=0}^\infty\frac{1}{(2^{\frac{4\iota}{np}})^j}}\\\nonumber
&&\quad\quad\quad\quad\quad\quad\quad+\frac{C_{\widetilde{\eta}}}{R^{2-b-2s_c}}\rho(u, R)^{\frac{4-np+2p}{4-np}}\sum_{j=0}^\infty\frac{1}{(2^{2-b-2s_c-\frac{4\iota}{4-np}})^j},
\end{eqnarray}
where the above series are summable and we conclude the estimate \eqref{f5}.
\end{proof}

\begin{rem}
The radial Gagliardo-Nirenberg inequality for functions in $\dot{H}^{s_c}\cap\dot{W}_b^{1,2}$ can also be established as follows:
\begin{equation}\nonumber
\int_{|x|\geq R}|x|^c|u|^{p+2}dx\leq \eta\int_{|x|\geq R}|x|^b|\nabla u|^2dx+\frac{C_\eta}{R^{2-b-2s_c}}\left(\rho(u, R)^{\frac{4+p}{4-p}}+\rho(u, R)^{\frac{p+2}{2}}\right).
\end{equation}
\end{rem}
The argument is quite similar to the one proved in Proposition 4.2 in \cite{CF}, so we omit the details.

\section{Main propositions}
In this section, we are devoted to the proof of some uniform estimates that are the main ingredients in the proof of Theorem \ref{thm2}. The technique is very similar to the one used in \cite{MR}, where only the radial solution is treated for the classical NLS equation. In our work, we show that Lemmas \ref{lem5.2}, \ref{pro1} and \ref{lem5.1} allow us to consider more general non-radial solutions to the dINLS equation.

First, we deduce a priori control of the functional $\rho$ defined in \eqref{f26} on parabolic space time intervals for a suitable choice of $R>0$.
\begin{lemma}\label{lem7}
Let $n\geq3,\ 2-n<b\leq0,\ b-2<c<\frac{nbp}{4},\ 2(2-b)<\textbf{p}_c<(2-b)(p+2),\ p<\frac{4}{n}$ and $\sigma_0,\ \gamma$ be defined as \eqref{f64} with $\gamma>-n$. Let $u\in C([0, \tau_\ast];\ \dot{H}^{s_c}\cap\dot{W}_b^{1,2})$ be a solution to \eqref{a0} with initial data $u_0\in\dot{H}^{s_c}\cap\dot{W}_b^{1,2}$. For any $A>0$ and $\tau_0\in[0, \tau_\ast]$, let $R=A\tau_0^{\frac{1}{2-b}}$ and define $M_\infty$ by
\begin{equation}\label{f27}
M_\infty^2(A, \tau_0)=\max_{\tau\in[0, \tau_0]}\rho(u(\tau), A\tau^{\frac{1}{2-b}}).
\end{equation}
Then, there exists a universal constant $C>0$ such that
\begin{eqnarray}\nonumber
\lefteqn{4ps_c\int_0^{\tau_0}(\tau_0-\tau)\|\nabla u(\tau)\|_{b,2}^2d\tau}\\\nonumber
&&\quad\leq \tau_0\left(V'_{\psi_R}(0)+4(ps_c+2-b)E(u_0)\tau_0\right)\\\label{f17}
&&\quad+C\tau_0^{1+\frac{2s_c}{2-b}}\left(A^{2-b+2s_c}\|u_0\|^2_{\gamma, \sigma_0}+\frac{M_\infty(A, \tau_0)^{\frac{2(4-np+2p)}{4-np}}+M_\infty^2(A, \tau_0)}{A^{2-b-2s_c}}\right)\quad\quad
\end{eqnarray}
and
\begin{eqnarray}\nonumber
\lefteqn{\frac{1}{R^{2s_c}}\int_{R\leq|x|\leq2R}|u(\tau_0)|^2dx}\\\nonumber
&&\leq \frac{1}{\tau_0^{\frac{2s_c}{2-b}}A^{2-b+2s_c}}\left(V'_{\psi_R}(0)+4(ps_c+2-b)E(u_0)\tau_0\right)\\\label{f18}
&&\quad+C\|u_0\|^2_{\gamma, \sigma_0}+\frac{C}{A^{2(2-b)}}\left(M_\infty(A, \tau_0)^{\frac{2(4-np+2p)}{4-np}}+M^2_\infty(A, \tau_0)\right).\quad\quad
\end{eqnarray}
\end{lemma}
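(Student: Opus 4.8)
The plan is to derive both \eqref{f17} and \eqref{f18} from a single pointwise-in-time differential inequality for $V_{\psi_R}$, obtained by inserting the Gagliardo--Nirenberg bound \eqref{f5} into the localized virial estimate \eqref{f4} of Lemma \ref{lem5.2}, with the radius kept fixed at $R=A\tau_0^{\frac{1}{2-b}}$. First I would dispose of the annular mass term in \eqref{f4}: the shell $\{2R\le|x|\le4R\}$ is of the form $\{R'\le|x|\le2R'\}$ with $R'=2R\ge R$, so the definition \eqref{f26} gives $\frac{1}{R^{2-b}}\int_{2R\le|x|\le4R}|u|^2dx\le\frac{C}{R^{2-b-2s_c}}\rho(u,R)$. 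Applying \eqref{f5} with a small parameter $\eta$ to $\int_{|x|\ge R}|x|^c|u|^{p+2}dx$ and absorbing the resulting $\eta\|\nabla u\|_{b,2}^2$ into the coercive term $-4ps_c\|\nabla u\|_{b,2}^2$, I obtain
\[V_{\psi_R}''(\tau)+4ps_c\|\nabla u(\tau)\|_{b,2}^2\le 8(ps_c+2-b)E(u_0)+\frac{C}{R^{2-b-2s_c}}\left(\rho(u(\tau),R)+\rho(u(\tau),R)^{\frac{4-np+2p}{4-np}}\right).\]
The decisive link to $M_\infty$ is the monotonicity of $\rho(u,\cdot)$ in its radius: for $\tau\le\tau_0$ one has $R=A\tau_0^{\frac{1}{2-b}}\ge A\tau^{\frac{1}{2-b}}$, whence $\rho(u(\tau),R)\le\rho(u(\tau),A\tau^{\frac{1}{2-b}})\le M_\infty^2(A,\tau_0)$ by \eqref{f27}. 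Thus the whole error is uniformly dominated by $M_\infty^2+M_\infty^{\frac{2(4-np+2p)}{4-np}}$ on $[0,\tau_0]$.

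To reach \eqref{f17} I would multiply this inequality by $(\tau_0-\tau)$, integrate over $[0,\tau_0]$, and use the Taylor identity $\int_0^{\tau_0}(\tau_0-\tau)V_{\psi_R}''(\tau)d\tau=V_{\psi_R}(\tau_0)-V_{\psi_R}(0)-\tau_0V_{\psi_R}'(0)$. Since $\psi_R\ge0$, the term $V_{\psi_R}(\tau_0)\ge0$ is discarded; the source term yields $4(ps_c+2-b)E(u_0)\tau_0^2$, and combining $\int_0^{\tau_0}(\tau_0-\tau)d\tau=\tfrac{\tau_0^2}{2}$ with $R^{-(2-b-2s_c)}=A^{-(2-b-2s_c)}\tau_0^{-1+\frac{2s_c}{2-b}}$ produces exactly the factor $\tau_0^{1+\frac{2s_c}{2-b}}A^{-(2-b-2s_c)}$ in front of the $M_\infty$ terms. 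The remaining boundary term $V_{\psi_R}(0)$ is where the low-regularity nature of the problem intervenes: with no $L^2$ control on $u_0$, I bound $V_{\psi_R}(0)\le C\int_{|x|\le4R}|x|^{2-b}|u_0|^2dx\le CR^{2-b}\int_{|x|\le4R}|u_0|^2dx$ as in \eqref{d25}, and then use \eqref{f1} to get $\int_{|x|\le4R}|u_0|^2dx\le CR^{2s_c}\|u_0\|_{\gamma,\sigma_0}^2$; since $R^{2-b+2s_c}=A^{2-b+2s_c}\tau_0^{1+\frac{2s_c}{2-b}}$, this gives precisely the $A^{2-b+2s_c}\|u_0\|_{\gamma,\sigma_0}^2$ contribution, and \eqref{f17} follows.

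For \eqref{f18} I would first convert the annular mass at time $\tau_0$ into $V_{\psi_R}(\tau_0)$: on $\{R\le|x|\le2R\}$ the construction \eqref{f36} gives $\psi_R\gtrsim R^{2-b}$, so $\frac{1}{R^{2s_c}}\int_{R\le|x|\le2R}|u(\tau_0)|^2dx\le\frac{C}{R^{2-b+2s_c}}V_{\psi_R}(\tau_0)$. It then suffices to bound $V_{\psi_R}(\tau_0)$, and here I reuse the same Taylor identity in the form $V_{\psi_R}(\tau_0)=V_{\psi_R}(0)+\tau_0V_{\psi_R}'(0)+\int_0^{\tau_0}(\tau_0-\tau)V_{\psi_R}''(\tau)d\tau$; now the nonpositive $-4ps_c\|\nabla u\|_{b,2}^2$ term is simply discarded (it only enlarges the upper bound), giving
\[V_{\psi_R}(\tau_0)\le V_{\psi_R}(0)+\tau_0V_{\psi_R}'(0)+4(ps_c+2-b)E(u_0)\tau_0^2+\frac{C\tau_0^{1+\frac{2s_c}{2-b}}}{A^{2-b-2s_c}}\left(M_\infty^2+M_\infty^{\frac{2(4-np+2p)}{4-np}}\right).\]
Dividing by $R^{2-b+2s_c}=A^{2-b+2s_c}\tau_0^{1+\frac{2s_c}{2-b}}$, bounding $V_{\psi_R}(0)$ again through \eqref{f1}, and using $A^{2-b+2s_c}A^{2-b-2s_c}=A^{2(2-b)}$, the four terms distribute into exactly the right-hand side of \eqref{f18}.

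The main obstacle is conceptual rather than computational: because $u_0\in\dot H^{s_c}\cap\dot W_b^{1,2}$ need not lie in $L^2$, the conserved mass that powers the classical localized-virial argument is gone, and every occurrence of $\|u\|_2$ must be replaced either by the scale-invariant functional $\rho$, controlled uniformly on the parabolic interval by $M_\infty$ through its monotonicity in the radius, or by the weighted norm $\|u_0\|_{\gamma,\sigma_0}$ through the embedding \eqref{f1}. The only genuinely delicate calculation is the bookkeeping of the powers of $A$ and $\tau_0$ forced by the fixed choice $R=A\tau_0^{\frac{1}{2-b}}$, together with the small-$\eta$ absorption of the gradient term from \eqref{f5}; once the differential inequality and the two modes of estimating $V_{\psi_R}$ are set up, the rest is routine.
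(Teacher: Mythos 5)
Your proposal is correct and follows essentially the same route as the paper: fix $R=A\tau_0^{\frac{1}{2-b}}$, insert \eqref{f5} and the $\rho$-bound on the annular mass into the virial estimate \eqref{f4}, use the monotonicity of $\rho$ in the radius to dominate everything by $M_\infty$, integrate twice in time (your weighted single integration is the same thing), and control $V_{\psi_R}(0)$ via \eqref{f1} and $V_{\psi_R}(\tau_0)$ from below on the annulus to split off \eqref{f17} and \eqref{f18}. All the power bookkeeping in $A$ and $\tau_0$ matches the paper's.
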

\begin{proof}
We consider the virial estimate in Lemma \ref{lem5.2} with $R=A\tau_0^{\frac{1}{2-b}}$ and estimate the terms on the right hand side.
Since $\rho(u, R)$ is non-increasing in $R$, we have
\begin{equation}\label{f6}
\rho(u(\tau), R)\leq \rho(u(\tau), A\tau^{\frac{1}{2-b}})\leq M_\infty^2(A, \tau_0),\quad \forall\ \tau\in[0, \tau_0].
\end{equation}
Thus, we derive that
\begin{equation}\label{f8}
\frac{1}{R^{2-b}}\int_{2R\leq|x|\leq4R}|u|^2dx\leq\frac{C}{R^{2-b-2s_c}}\rho(u(\tau), 2R)\leq\frac{C}{R^{2-b-2s_c}}M_\infty^2(A, \tau_0).
\end{equation}

In terms of the nonlinear term in \eqref{f4}, we apply the Gagliardo-Nirenberg inequality \eqref{f5} with $R=A\tau_0^{\frac{1}{2-b}}$ to obtain
\begin{equation}\label{f7}
\int_{|x|\geq R}|x|^c|u|^{p+2}dx\leq \eta\|\nabla u\|_{b,2}^2+\frac{C_\eta}{R^{2-b-2s_c}}M_\infty(A, \tau_0)^{\frac{2(4-np+2p)}{4-np}},
\end{equation}
where we have used \eqref{f6}. Now, plugging \eqref{f7} and \eqref{f8} into \eqref{f4} and choosing $\eta$ small enough, we obtain
\begin{eqnarray}\nonumber
\lefteqn{
V''_{\psi_R}(\tau)\leq 8(ps_c+2-b)E(u_0)-4ps_c\|\nabla u\|^2_{b,2}}\\\nonumber
&&\quad\quad+\frac{C}{R^{2-b-2s_c}}\left(M_\infty(A, \tau_0)^{\frac{2(4-np+2p)}{4-np}}+M^2_\infty(A, \tau_0)\right).\quad
\end{eqnarray}
For any $\tau\in[0, \tau_0]$, we integrate the above inequality in time from 0 to $\tau$ to get
\begin{eqnarray}\nonumber
\lefteqn{
4ps_c\int_0^\tau\|\nabla u(s)\|^2_{b,2}ds+V'_{\psi_R}(\tau)}\\\nonumber
&&\quad\quad\quad\leq V'_{\psi_R}(0)+8(ps_c+2-b)E(u_0)\tau\\\nonumber
&&\quad\quad\quad\quad+\frac{C\tau}{R^{2-b-2s_c}}\left(M_\infty(A, \tau_0)^{\frac{2(4-np+2p)}{4-np}}+M^2_\infty(A, \tau_0)\right).
\end{eqnarray}
Then we integrate one more time from 0 to $\tau_0$ and use \eqref{e5} to get
\begin{eqnarray}\nonumber
\lefteqn{
4ps_c\int_0^{\tau_0}\int_0^\tau\|\nabla u(s)\|^2_{b,2}dsd\tau+V_{\psi_R}(\tau_0)-V_{\psi_R}(0)}\\\nonumber
&&\quad\quad\quad\quad\leq V'_{\psi_R}(0)\tau_0+4(ps_c+2-b)E(u_0)\tau_0^2\\\label{f15}
&&\quad\quad\quad\quad\quad+\frac{C\tau_0^2}{2R^{2-b-2s_c}}\left(M_\infty(A, \tau_0)^{\frac{2(4-np+2p)}{4-np}}+M^2_\infty(A, \tau_0)\right).\quad
\end{eqnarray}
Now, similar to the proof of \eqref{d25}, we deduce that
\begin{eqnarray}\label{f16}
\int\psi_R|u_0|^2dx\leq C_b\int_{|x|\leq4R}|x|^{2-b}|u_0|^2dx\leq C_bR^{2-b+2s_c}\|u_0\|^2_{\gamma, \sigma_0},
\end{eqnarray}
where the last inequality is obtained by \eqref{f1}. Thus, since $R=A\tau_0^{\frac{1}{2-b}}$, we collect \eqref{f15} and \eqref{f16} to get
\begin{eqnarray}\nonumber
\lefteqn{
4ps_c\int_0^{\tau_0}(\tau_0-\tau)\|\nabla u(\tau)\|^2_{b,2}d\tau+V_{\psi_R}(\tau_0)}\\\nonumber
&&\leq CA^{2-b+2s_c}\tau_0^{1+\frac{2s_c}{2-b}}\|u_0\|^2_{\gamma, \sigma_0}+V'_{\psi_R}(0)\tau_0\\\nonumber
&&\ +4(ps_c+2-b)E(u_0)\tau_0^2+C\tau_0^{1+\frac{2s_c}{2-b}}\frac{M_\infty(A, \tau_0)^{\frac{2(4-np+2p)}{4-np}}+M^2_\infty(A, \tau_0)}{A^{2-b-2s_c}},
\end{eqnarray}
which concludes \eqref{f17}.

Next, by the definition of $\phi_R$ and $\nabla\psi_R=\frac{\nabla\phi_R}{|x|^b}$, we have
\begin{eqnarray}\nonumber
\lefteqn{\frac{1}{R^{2-b+2s_c}}V_{\psi_R}(\tau_0)\geq\frac{1}{R^{2-b+2s_c}}\int_{R\leq|x|\leq2R}|x|^{2-b}|u(\tau_0)|^2dx}\\\label{f19}
&&\quad\quad\quad\quad\quad\ \ \geq \frac{1}{R^{2s_c}}\int_{R\leq|x|\leq2R}|u(\tau_0)|^2dx.\quad\quad\quad\quad\quad\quad
\end{eqnarray}
We now divide \eqref{f15} by $R^{2-b+2s_c}$ with $R=A\tau_0^{\frac{1}{2-b}}$ and use \eqref{f19} to derive
\begin{eqnarray}\nonumber
\lefteqn{\frac{1}{R^{2s_c}}\int_{R\leq|x|\leq2R}|u(\tau_0)|^2dx}\\\nonumber
&&\leq \frac{1}{\tau_0^{\frac{2s_c}{2-b}}A^{2-b+2s_c}}\left(V'_{\psi_R}(0)+4(ps_c+2-b)E(u_0)\tau_0\right)\\\nonumber
&&\quad+C\|u_0\|^2_{\gamma, \sigma_0}+\frac{C}{A^{2(2-b)}}\left(M_\infty(A, \tau_0)^{\frac{2(4-np+2p)}{4-np}}+M^2_\infty(A, \tau_0)\right),
\end{eqnarray}
which is the desired result \eqref{f18}. We finish the proof of Lemma \ref{lem7}.
\end{proof}

Next, we prove a uniform control of the functional $\rho$ and a dispersive control of the solutions to \eqref{a0} in a parabolic region in time, assuming an initial control on the $L^{\sigma_0}(|x|^\gamma dx)$ norm and the energy.
\begin{pro}\label{pro4.1}
Let $n\geq3,\ 2-n<b\leq0,\ b-2<c<\frac{nbp}{4},\ 2(2-b)<\textbf{p}_c<(2-b)(p+2),\ p<\frac{4}{n}$ and $\sigma_0>1$ be defined as \eqref{f64} with $-n<\gamma\leq0$. Let $u\in C([0, \tau_\ast];\ \dot{H}^{s_c}\cap\dot{W}_b^{1,2})$ be the solution to \eqref{a0} with initial data $u_0\in\dot{H}^{s_c}\cap\dot{W}_b^{1,2}$. Assume
\begin{equation}\label{f20}
\tau_\ast^{1-\frac{2s_c}{2-b}}\max\{E(u_0),\ 0\}<1
\end{equation}
and
\begin{equation}\label{f21}
M_0:=\frac{4\|u_0\|_{\gamma, \sigma_0}}{\|\mathcal{Q}\|_{\gamma, \sigma_0}}>2,
\end{equation}
where $\mathcal{Q}$ is the solution to the elliptic equation \eqref{d16}. Then there exist universal constants $C_1,\ \alpha_1,\ \alpha_2>0$ depending only on $n,b,p,c$ such that for all $\tau_0\in[0, \tau_\ast]$, the following uniform control of the functional
\begin{equation}\label{f22}
\rho(u(\tau_0), M_0^{\alpha_1}\tau_0^{\frac{1}{2-b}})\leq C_1M_0^2
\end{equation}
and the global dispersive estimate holds
\begin{equation}\label{f23}
\int_0^{\tau_\ast}(\tau_\ast-\tau)\|\nabla u(\tau)\|_{b,2}^2d\tau\leq M_0^{\alpha_2}\tau_\ast^{1+\frac{2 s_c}{2-b}}.
\end{equation}

\end{pro}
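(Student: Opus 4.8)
The plan is to run a continuity (bootstrap) argument in the time variable $\tau_0$ for the Morrey--Campanato functional $M_\infty(A,\tau_0)$ defined in \eqref{f27}, with the localization scale fixed at $A=M_0^{\alpha_1}$ for an exponent $\alpha_1>0$ to be chosen large (depending only on $n,b,p,c$). The starting point is the initial control: evaluating \eqref{f1} at $\tau_0=0$, together with the normalization \eqref{f21}, gives $\rho(u_0,R')\le C\|u_0\|_{\gamma,\sigma_0}^2=\tfrac{C}{16}\|\mathcal{Q}\|_{\gamma,\sigma_0}^2M_0^2$ uniformly in $R'$, so $M_\infty^2(A,0)\le c_0M_0^2$ for a universal $c_0$. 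I then fix a universal constant $C_1>2c_0$ and set $\tau^{**}=\sup\{\tau_0\in[0,\tau_\ast]:M_\infty^2(A,\tau_0)\le C_1M_0^2\}$. Since $\tau_0\mapsto M_\infty^2(A,\tau_0)$ is continuous and nondecreasing and strictly below $C_1M_0^2$ at $\tau_0=0$, we have $\tau^{**}>0$, and it suffices to show that the bound is strictly improved on $[0,\tau^{**}]$, which forces $\tau^{**}=\tau_\ast$.

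For the improvement I would use estimate \eqref{f18} of Lemma \ref{lem7}. The only delicate structural point is the supremum over radii hidden in $\rho$: to bound $\rho(u(\tau_0),A\tau_0^{1/(2-b)})=\sup_{R'\ge A\tau_0^{1/(2-b)}}(R')^{-2s_c}\int_{R'\le|x|\le2R'}|u(\tau_0)|^2dx$, I would apply Lemma \ref{lem7} not only with $A$ but with every scale $A'\ge A$ (writing $R'=A'\tau_0^{1/(2-b)}$); since $\rho(\cdot,R)$ is nonincreasing in $R$ one has $M_\infty(A',\tau_0)\le M_\infty(A,\tau_0)$, so each such application is controlled by the single quantity $M_\infty(A,\tau_0)$ under the bootstrap hypothesis. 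On the right-hand side of \eqref{f18} the leading term $C\|u_0\|_{\gamma,\sigma_0}^2\sim M_0^2$ is of the right size; the energy contribution is harmless because \eqref{f20} yields $E(u_0)\tau_0^{1-2s_c/(2-b)}<1$ while its prefactor carries a large negative power of $A$; the momentum term $V'_{\psi_R}(0)$ is estimated by Cauchy--Schwarz as $|V'_{\psi_R}(0)|\le CR^{(2-b+2s_c)/2}\|u_0\|_{\gamma,\sigma_0}\|\nabla u_0\|_{b,2}$ using \eqref{f1}, and after dividing by $\tau_0^{2s_c/(2-b)}A^{2-b+2s_c}$ it comes with a positive power of $\tau_0$ and a large negative power of $A$. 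The crucial term is the self-referential one $A^{-2(2-b)}\,M_\infty(A,\tau_0)^{2(4-np+2p)/(4-np)}$, which under the bootstrap hypothesis is at most $CA^{-2(2-b)}(C_1M_0^2)^{(4-np+2p)/(4-np)}$.

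The absorption of this last term is where the choice of $\alpha_1$ enters. Because $p<4/n$, the exponent $\tfrac{4-np+2p}{4-np}=1+\tfrac{2p}{4-np}$ is strictly larger than $1$, so the self-term grows like a super-quadratic power of $M_0$; writing $A=M_0^{\alpha_1}$ it equals $C\,C_1^{\,1+2p/(4-np)}\,M_0^{\,2+4p/(4-np)-2(2-b)\alpha_1}$, and choosing $\alpha_1>\tfrac{2p}{(2-b)(4-np)}$ makes the exponent of $M_0$ strictly below $2$; since $M_0>2$ is bounded below this term is then $\le\tfrac14 C_1M_0^2$. Combining the estimates and taking $C_1$ large enough to dominate the (universal-size) main and momentum contributions yields $M_\infty^2(A,\tau^{**})\le\tfrac34 C_1M_0^2<C_1M_0^2$, contradicting the definition of $\tau^{**}$ unless $\tau^{**}=\tau_\ast$. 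This proves the uniform bound $M_\infty^2(A,\tau_0)\le C_1M_0^2$ for all $\tau_0\in[0,\tau_\ast]$, and \eqref{f22} follows at once since $\rho(u(\tau_0),A\tau_0^{1/(2-b)})\le M_\infty^2(A,\tau_0)$.

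Finally, for the dispersive bound \eqref{f23} I would insert the uniform control $M_\infty^2(A,\tau_\ast)\le C_1M_0^2$ into \eqref{f17} taken at $\tau_0=\tau_\ast$. Every term on the right is then a power of $M_0$ times $\tau_\ast^{1+2s_c/(2-b)}$: the dominant contribution is $C\tau_\ast^{1+2s_c/(2-b)}A^{2-b+2s_c}M_0^2=C\tau_\ast^{1+2s_c/(2-b)}M_0^{\alpha_1(2-b+2s_c)+2}$, the energy term is $\le\tau_\ast^{1+2s_c/(2-b)}$ by \eqref{f20}, and the self-term again carries a negative power of $A$; reading off the largest power of $M_0$ gives the exponent $\alpha_2$. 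The main obstacle throughout is the closure of the bootstrap: one must simultaneously defeat the super-critical self-interaction term, which is possible only because $p<4/n$ and is what forces $\alpha_1$ large; handle the supremum over scales in the Morrey norm through the monotonicity of $M_\infty$ in $A$; and control the momentum $V'_{\psi_R}(0)$, for which the factor $\|\nabla u_0\|_{b,2}$ must be absorbed using the positive $\tau_0$-power together with the large negative power of $A$ (it is normalized to unity in the scaling of $v^{(t)}$ used when the proposition is applied).
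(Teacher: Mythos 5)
Your overall architecture (a continuity/bootstrap argument on the Morrey--Campanato quantity, closed by feeding the bootstrap hypothesis into Lemma \ref{lem7} and absorbing the superlinear self-term $A^{-2(2-b)}M_\infty^{2(4-np+2p)/(4-np)}$ by taking $A$ a large power of $M_0$) is the same as the paper's, and that part of the absorption computation is essentially right (modulo the fact that $\alpha_1$ must be chosen large depending on $C_1$, not merely above the threshold $\tfrac{2p}{(2-b)(4-np)}$, since the self-term carries $C_1^{1+2p/(4-np)}$ rather than $C_1$). However, there is a genuine gap in your treatment of the momentum term $V'_{\psi_R}(0)$, and it is precisely the point where the paper has to work hardest. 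Your Cauchy--Schwarz bound gives, after division by $\tau_0^{2s_c/(2-b)}A^{2-b+2s_c}$, a quantity of size
\[
A^{-\frac{2-b+2s_c}{2}}\,\tau_0^{\frac{2-b-2s_c}{2(2-b)}}\,\|u_0\|_{\gamma,\sigma_0}\,\|\nabla u_0\|_{b,2},
\]
and the factor $\tau_0^{\frac{2-b-2s_c}{2(2-b)}}\|\nabla u_0\|_{b,2}$ is controlled by \emph{nothing} in the hypotheses: \eqref{f20} bounds only $\tau_\ast^{1-2s_c/(2-b)}\max\{E(u_0),0\}$, and the energy can vanish while $\|\nabla u_0\|_{b,2}$ is arbitrarily large. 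The parenthetical escape route you offer (``it is normalized to unity in the scaling of $v^{(t)}$'') does not close the gap either: the proposition is applied in the proof of Theorem \ref{thm2} to $u$ itself without renormalization, and in the proof of Theorem \ref{thm3} it is applied to $v^{(t)}$ on $[0,\tau_\ast]$ with $\tau_\ast=2/\lambda_u(t)\to\infty$, so even with $\|\nabla v(0)\|_{b,2}=1$ your bound degenerates as $\tau_0^{\frac{2-b-2s_c}{2(2-b)}}$ grows, and it cannot be dominated by a universal power of $M_0$.

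This is why the paper's proof runs a \emph{double} bootstrap --- on the dispersive quantity \eqref{f24} as well as on $M_\infty$ \eqref{f25} --- and devotes its entire Step 3 to the estimate \eqref{f30}: it uses the bootstrapped space-time bound \eqref{f24} and a pigeonhole argument to locate a good time $t_0\in[\tfrac{\tau_0}{4}\epsilon^{\frac{2-b}{2-b+2s_c}},\tfrac{\tau_0}{2}\epsilon^{\frac{2-b}{2-b+2s_c}}]$ at which $\|\nabla u(t_0)\|_{b,2}^2\lesssim G_\epsilon t_0^{-1+2s_c/(2-b)}$, controls $V_{\psi_R}(t_0)$ by integrating $(V_{\psi_R}^{1/2})'\le C\|\nabla u\|_{b,2}$ forward from $0$, obtains $V'_{\psi_R}(t_0)$ from these two bounds, and only then propagates \emph{backwards} from $t_0$ to $0$ via the second virial identity \eqref{b11} and energy conservation to reach $V'_{\psi_R}(0)$. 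In your single-variable bootstrap this mechanism is unavailable, because without \eqref{f24} as a bootstrap hypothesis there is no time-averaged gradient control from which to extract the good time $t_0$. To repair the proof you would need to (i) add the dispersive bound to the bootstrap and (ii) replace the direct Cauchy--Schwarz estimate of $V'_{\psi_R}(0)$ by the intermediate-time/backward-integration argument of the paper's Step 3.
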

\begin{proof}
We split the proof into three steps.

\textbf{Step 1.} Let $u\in C([0, \tau_\ast];\ \dot{H}^{s_c}\cap\dot{W}_b^{1,2})$ be a solution to \eqref{a0} with initial data $u_0\in\dot{H}^{s_c}\cap\dot{W}_b^{1,2}$ and $\epsilon>0$ be a fixed small enough real number to be chosen later. Define
\begin{equation}\label{f29}
G_\epsilon=M_0^{\frac{1}{\epsilon}},\quad\quad\quad A_\epsilon=\left(\frac{\epsilon G_\epsilon}{M_0^2}\right)^{\frac{1}{2-b+2s_c}},
\end{equation}
where $M_0$ is given in \eqref{f21}. Consider the following estimates
\begin{equation}\label{f24}
\int_0^{\tau_0}(\tau_0-\tau)\|\nabla u(\tau)\|_{b,2}^2d\tau\leq G_\epsilon \tau_0^{1+\frac{2 s_c}{2-b}}
\end{equation}
and
\begin{equation}\label{f25}
M^2_\infty(A_\epsilon, \tau_0)\leq\frac{2M_0^2}{\epsilon}.
\end{equation}
We claim that the largest time $\tau_1\in[0, \tau_\ast]$ such that both \eqref{f24} and \eqref{f25} hold for all $\tau_0\in[0, \tau_1]$ must exist.

Indeed, by the regularity of $u\in C([0, \tau_\ast];\ \dot{H}^{s_c}\cap\dot{W}_b^{1,2})$, there exists $0<\delta_1<\min\{1, \tau_\ast\}$ such that
\[\|\nabla u(\tau)\|_{b,2}^2<1+\|\nabla u_0\|_{b,2}^2,\quad\quad\forall\ \tau\in[0, \delta_1].\]
Multiplying the above inequality by $\tau_0-\tau$ and integrating from 0 to $\tau_0$ with $\tau_0\in[0, \delta_1]$, we have
\[\int_0^{\tau_0}(\tau_0-\tau)\|\nabla u(\tau)\|_{b,2}^2d\tau<\frac{1+\|\nabla u_0\|_{b,2}^2}{2}\tau_0^2\leq M_0^{\frac{1}{\epsilon}}\tau_0^{1+\frac{2 s_c}{2-b}},\]
for sufficiently small $\epsilon>0$ due to $M_0\geq2$ and $0<s_c<\frac{2-b}{2}$.

On the other hand, by Lemma \ref{lem5.1} (i), we obtain
\begin{equation}\label{f28}
\rho(u, R)\leq \rho(u, \frac{R}{2})\leq C\|u\|_{\gamma, \sigma_0}^2.
\end{equation}
Since $u\in C([0, \tau_\ast];\ \dot{H}^{s_c}\cap\dot{W}_b^{1,2})$, it follows from \eqref{f28} and the weighted Sobolev embedding $\dot{H}^{s_c}\hookrightarrow L^{\sigma_0}(|x|^\gamma dx)$ that
there exists $0<\delta_2\leq\tau_\ast$ such that for all $\tau\in[0, \delta_2]$,
\[\rho(u(\tau), A_\epsilon\tau^{\frac{1}{2-b}})\leq C\|u(\tau)\|_{\gamma, \sigma_0}^2\leq C(\epsilon_1+\|u_0\|_{\gamma, \sigma_0})^2\]
with sufficiently small $\epsilon_1>0$. Thus, from the definition \eqref{f27} and \eqref{f21}, %there exist $\epsilon_0>0$ such that for all
\[M^2_\infty(A_\epsilon, \tau)\leq 2CM_0^2\leq\frac{2M_0^2}{\epsilon}\]
holds for sufficiently small $\epsilon>0$, which shows the existence of the largest time $\tau_1\in[0, \tau_\ast]$.

\textbf{Step 2.} The goal is to show that $\tau_1=\tau_\ast$, and therefore both the estimates \eqref{f24} and \eqref{f25} hold at the time $\tau_\ast$, which clearly yields \eqref{f22} and \eqref{f23}, in view of the definition \eqref{f29}.

To our aim, for any $\tau_0\in[0, \tau_1],$ we deduce from \eqref{f29}, \eqref{f25} and \eqref{f21} that
\begin{eqnarray}\nonumber
\lefteqn{\frac{M_\infty(A_\epsilon, \tau_0)^{\frac{2(4-np+2p)}{4-np}}+M^2_\infty(A_\epsilon, \tau_0)}{A_\epsilon^{2-b-2s_c}}\leq\frac{(\frac{2M_0^2}{\epsilon})^{\frac{4-np+2p}{4-np}}+\frac{2M_0^2}{\epsilon}}{(\frac{\epsilon G_\epsilon}{M_0^2})^{\frac{2-b-2s_c}{2-b+2s_c}}}}\\\nonumber
&&\quad\quad\quad\quad\quad\quad\quad\quad\quad\quad\quad\quad\quad\quad\leq(\frac{M_0}{\epsilon})^C\frac{1}{G_\epsilon^{\frac{2-b-2s_c}{2-b+2s_c}}}\leq\frac{1}{\epsilon^CM_0^{\frac{2-b-2s_c}{2-b+2s_c}\frac{1}{\epsilon}-C}}\leq\frac{1}{10},\quad
\end{eqnarray}
where $C>0$ depends only on $n, p, b, c$. So by Lemma \ref{lem7}, we inject this inequality into the estimate \eqref{f17} for $R=A_\epsilon\tau_0^{\frac{1}{2-b}}$ and use \eqref{b26}, \eqref{f21} to obtain
\begin{eqnarray}\nonumber
\lefteqn{\int_0^{\tau_0}(\tau_0-\tau)\|\nabla u(\tau)\|_{b,2}^2d\tau}\\\nonumber
&&\leq C\tau_0^{1+\frac{2s_c}{2-b}}(A_\epsilon^{2-b+2s_c}M_0^2+\frac{1}{10})+C\tau_0\left(V'_{\psi_R}(0)+4(ps_c+2-b)E(u_0)\tau_0\right)\\\nonumber
&&\leq G_\epsilon \tau_0^{1+\frac{2s_c}{2-b}}\left(\epsilon C+\frac{C}{10G_\epsilon}+\frac{C}{G_\epsilon\tau_0^{\frac{2s_c}{2-b}}}(V'_{\psi_R}(0)+4(ps_c+2-b) E(u_0)\tau_0)\right)\\\label{f31}
&&\leq G_\epsilon \tau_0^{1+\frac{2s_c}{2-b}}\left(\frac{1}{10}+\frac{C}{G_\epsilon\tau_0^{\frac{2s_c}{2-b}}}(V'_{\psi_R}(0)+4(ps_c+2-b) E(u_0))\tau_0\right)
\end{eqnarray}
for $\epsilon>0$ small enough. Now we claim that for any $\tau_0\in[0, \tau_1],\ A\geq A_\epsilon$ and $R=A\tau_0^{\frac{1}{2-b}}$, there exists a universal constant $C>0$ such that the following estimate holds
\begin{equation}\label{f30}
V'_{\psi_R}(0)+4(ps_c+2-b) E(u_0)\tau_0\leq \frac{CM_0^2A^{2-b+2s_c}}{\epsilon^{\frac{2-b}{2-b+2s_c}}}\tau_0^{\frac{2s_c}{2-b}}.
\end{equation}

We postpone the proof of \eqref{f30} to the next step. Assume that \eqref{f30} holds, we plug \eqref{f30} at $A=A_\epsilon$ into \eqref{f31} and use \eqref{f29} to get
\begin{eqnarray}\nonumber
\lefteqn{\int_0^{\tau_0}(\tau_0-\tau)\|\nabla u(\tau)\|_{b,2}^2d\tau\leq G_\epsilon \tau_0^{1+\frac{2s_c}{2-b}}\left(\frac{1}{10}+\frac{CM_0^2A_\epsilon^{2-b+2s_c}}{G_\epsilon\epsilon^{\frac{2-b}{2-b+2s_c}}}\right)}\\\label{f33}
&&\quad\quad\quad\quad\quad\quad\quad\quad\quad\ =G_\epsilon \tau_0^{1+\frac{2s_c}{2-b}}\left(\frac{1}{10}+C\epsilon^{\frac{2s_c}{2-b+2s_c}}\right)\leq\frac{G_\epsilon}{2}\tau_0^{1+\frac{2s_c}{2-b}},\quad\quad
\end{eqnarray}
provided $\epsilon>0$ has been chosen small enough.

Moreover, for $A\geq A_\epsilon$ and $R=A_\epsilon\tau_0^{\frac{1}{2-b}}$, since $\rho$ is non-increasing function in $R$, for any $\tau_0\in[0, \tau_1]$, we obtain
\begin{equation}\label{f32}
M^2_\infty(A, \tau_0)\leq M^2_\infty(A_\epsilon, \tau_0)\leq M^2_\infty(A_\epsilon, \tau_1)\leq\frac{2M_0^2}{\epsilon},
\end{equation}
where the last inequality is deduced by \eqref{f25}. Thus we invoke the estimate \eqref{f18} in Lemma \ref{lem7}, and collect \eqref{f30}, \eqref{f21}, \eqref{f32} to obtain
\begin{eqnarray}\nonumber
\lefteqn{\frac{1}{R^{2s_c}}\int_{R\leq|x|\leq2R}|u(\tau_0)|^2dx}\\\nonumber
&&\quad\quad\leq CM_0^2+\frac{C}{A^{2(2-b)}}\left((\frac{2M_0^2}{\epsilon})^{\frac{4-np+2p}{4-np}}+\frac{2M_0^2}{\epsilon}\right)+\frac{CM_0^2}{\epsilon^{\frac{2-b}{2-b+2s_c}}}\\\nonumber
&&\quad\quad\leq CM_0^2+\frac{C}{(\frac{\epsilon G_\epsilon}{M_0^2})^{\frac{2(2-b)}{2-b+2s_c}}}\left((\frac{2M_0^2}{\epsilon})^{\frac{4-np+2p}{4-np}}+\frac{2M_0^2}{\epsilon}\right)+\frac{CM_0^2}{\epsilon^{\frac{2-b}{2-b+2s_c}}}\\\nonumber
&&\quad\quad\leq\frac{M_0^2}{\epsilon}\left(C\epsilon+\frac{C}{\epsilon^{\widetilde{C}}M_0^{\frac{1}{\epsilon}\frac{2(2-b)}{2-b+2s_c}-2\widetilde{C}}}+C\epsilon^{\frac{2s_c}{2-b+2s_c}}\right)<\frac{M_0^2}{\epsilon}
\end{eqnarray}
for sufficiently small $\epsilon>0$. So we obtain
\begin{equation}\label{f34}
M^2_\infty(A_\epsilon, \tau_0)<\frac{M_0^2}{\epsilon}.
\end{equation}

Finally, using the regularity of $u$ again, the estimates \eqref{f33} and \eqref{f34} yield that $\tau_1=\tau_\ast$. Therefore, we conclude the estimates \eqref{f22} and \eqref{f23}.

\textbf{Step 3.} This step is devoted to the proof of the key estimate \eqref{f30}. For any $\tau_0\in[0, \tau_1]$, we first claim that there exists a universal constant $\widetilde{C}>0$ and a time $t_0$ such that
\begin{equation}\label{f35}
t_0\in[\frac{\tau_0}{4}\epsilon^{\frac{2-b}{2-b+2s_c}},\ \frac{\tau_0}{2}\epsilon^{\frac{2-b}{2-b+2s_c}}]\quad\quad\mbox{and}\quad\quad \|\nabla u(t_0)\|_{b,2}^2\leq\frac{\widetilde{C}G_\epsilon}{t_0^{1-\frac{2s_c}{2-b}}}.
\end{equation}

In fact, if \eqref{f35} does not hold, then, with $t=\epsilon^{\frac{2-b}{2-b+2s_c}}\tau_0\leq\tau_0\leq\tau_1,$ it follows from \eqref{f24} that
\begin{eqnarray}\nonumber
\lefteqn{G_\epsilon t^{1+\frac{2 s_c}{2-b}}\geq\int_0^{t}(t-\tau)\|\nabla u(\tau)\|_{b,2}^2d\tau\geq\frac{t}{2}\int_{\frac{t}{4}}^{\frac{t}{2}}\|\nabla u(\tau)\|_{b,2}^2d\tau}\\\nonumber
&&\quad\quad\ >\frac{t\widetilde{C}G_\epsilon}{2}\int_{\frac{t}{4}}^{\frac{t}{2}}\frac{1}{\tau^{1-\frac{2s_c}{2-b}}}d\tau\geq C\widetilde{C}G_\epsilon t^{1+\frac{2s_c}{2-b}},\quad
\end{eqnarray}
which implies a contradiction for $\widetilde{C}>0$ large enough. This shows that \eqref{f35} is valid.

Next, for $A\geq A_\epsilon$, if $A_1>0$ satisfies $R=A\tau_0^{\frac{1}{2-b}}=A_1t_0^{\frac{1}{2-b}}$ and
\begin{equation}\nonumber
\left(\frac{\epsilon^{\frac{2-b}{2-b+2s_c}}}{4}\right)^{\frac{1}{2-b}}\leq\frac{A}{A_1}\leq\epsilon^{\frac{1}{2-b+2s_c}},
\end{equation}
then we can estimate the integral term $V_{\psi_R}(t_0)$. From the virial identity \eqref{b26} and using the H\"{o}lder inequality, we obtain
\begin{equation}\label{f37}
V'_{\psi_R}(\tau)\leq C\|\nabla u\|_{b,2}(\int|\nabla\phi_R|^2|x|^{-b}|u|^2dx)^{\frac{1}{2}}.
\end{equation}
By the properties of $\theta$ in \eqref{f36} and the relation with $\nabla\psi_R=\frac{\nabla\phi_R}{|x|^b}$, one can easily deduce that
\[|\nabla\phi_R|^2|x|^{-b}=|R\theta'(\frac{r}{R})|^2|x|^{-b}\leq C\psi_R.\]
Thus, combining the last inequality with \eqref{f37}, we have
\[\left(V_{\psi_R}(\tau)^{\frac{1}{2}}\right)'\leq C\|\nabla u\|_{b,2}.\]
Now, we integrate the above differential inequality from 0 to $t_0$ and square both sides to obtain
\begin{eqnarray}\nonumber
\lefteqn{V_{\psi_R}(t_0)\leq C\left(V_{\psi_R}(0)+(\int_0^{t_0}\|\nabla u(\tau)\|_{b,2}d\tau)^2\right)}\\\nonumber
&&\quad\ \leq C\left(R^{2-b+2s_c}M_0^2+t_0\int_0^{t_0}\|\nabla u(\tau)\|^2_{b,2}d\tau\right)\\\nonumber
&&\quad\ \leq C\left(R^{2-b+2s_c}M_0^2+\int_0^{2t_0}(2t_0-\tau)\|\nabla u(\tau)\|^2_{b,2}d\tau\right),
\end{eqnarray}
where the last but one inequality is obtained by \eqref{f16} and \eqref{f21}.

Since $2t_0\leq \epsilon^{\frac{2-b}{2-b+2s_c}}\tau_0\leq \tau_0\leq\tau_1$ and $R=A_1t_0^{\frac{1}{2-b}}$, we link \eqref{f24} with the above inequality to obtain
\begin{eqnarray}\label{f38}
V_{\psi_R}(t_0)\leq C(R^{2-b+2s_c}M_0^2+G_\epsilon t_0^{1+\frac{2 s_c}{2-b}})\leq C R^{2-b+2s_c}\left(M_0^2+\frac{G_\epsilon}{A_1^{2-b+2s_c}}\right).
\end{eqnarray}
Therefore, the control of $V_{\psi_R}(t_0)$ is achieved.

Now, for $R=A\tau_0^{\frac{1}{2-b}}=A_1t_0^{\frac{1}{2-b}}$, and $A\geq A_\epsilon$, we deduce from \eqref{f35}, \eqref{f37} and \eqref{f38} that
\begin{eqnarray}\nonumber
\lefteqn{V'_{\psi_R}(t_0)\leq CR^{\frac{2-b+2s_c}{2}}\|\nabla u(t_0)\|_{b,2}(\frac{1}{R^{2-b+2s_c}}\int\psi_R|u(t_0)|^2dx)^{\frac{1}{2}}}\\\nonumber
&&\quad\ \leq CR^{\frac{2-b+2s_c}{2}}\frac{G_\epsilon^{\frac{1}{2}}}{t_0^{\frac{1}{2}-\frac{s_c}{2-b}}}\left(M_0^2+\frac{G_\epsilon}{A_1^{2-b+2s_c}}\right)^{\frac{1}{2}}\\\nonumber
&&\quad\ \leq CA^{\frac{2-b+2s_c}{2}}\tau_0^{\frac{1}{2}+\frac{s_c}{2-b}}\frac{G_\epsilon^{\frac{1}{2}}}{(\tau_0\frac{A^{2-b}}{A_1^{2-b}})^{\frac{1}{2}-\frac{s_c}{2-b}}}\left(M_0+\frac{G_\epsilon^{\frac{1}{2}}}{A_1^{\frac{2-b+2s_c}{2}}}\right),
\end{eqnarray}
which can be further estimated by
\begin{eqnarray}\nonumber
\lefteqn{V'_{\psi_R}(t_0)\leq CM_0^2A^{2-b+2s_c}\tau_0^{\frac{2s_c}{2-b}}\left((\frac{G_\epsilon}{A^{2-b+2s_c}M_0^2})^{\frac{1}{2}}(\frac{A_1}{A})^{\frac{2-b}{2}-s_c}+(\frac{G_\epsilon}{A^{2-b+2s_c}M_0^2})(\frac{A}{A_1})^{2s_c}\right)}\\\nonumber
&&\quad\ \ \leq CM_0^2A^{2-b+2s_c}\tau_0^{\frac{2s_c}{2-b}}\left((\frac{G_\epsilon}{A_\epsilon^{2-b+2s_c}M_0^2})^{\frac{1}{2}}(\frac{A_1}{A})^{\frac{2-b}{2}-s_c}+(\frac{G_\epsilon}{A_\epsilon^{2-b+2s_c}M_0^2})(\frac{A}{A_1})^{2s_c}\right)\\\label{f40}
&&\quad\ \ \leq \frac{CM_0^2A^{2-b+2s_c}}{\epsilon^{\frac{2-b}{2-b+2s_c}}}\tau_0^{\frac{2s_c}{2-b}}.
\end{eqnarray}
This shows that  the virial quantity $V'_{\psi_R}(t_0)$ at time $t_0$ is controlled.

Finally, we turn to the control on the virial quantity $V'_{\psi_R}(t_0)$ backwards at $\tau=0$. To do it, we use \eqref{b11} with $R=A\tau_0^{\frac{1}{2-b}}$ for all $\tau_0\in[0, \tau_1]$ and estimate the terms on the right-hand side.
Since
\begin{equation}\nonumber
|\Delta^2\phi_R|\lesssim R^{-2}\quad\quad \mbox{and}\quad\quad |\nabla\Delta\phi_R\cdot\nabla|x|^b|\lesssim R^{b-2},
\end{equation}
so for all $A\geq A_\epsilon$, we deduce from \eqref{f8} and \eqref{f25} that
\begin{eqnarray}\nonumber
\lefteqn{\left|\int(|x|^b\Delta^2\phi_R+\nabla\Delta\phi_R\cdot\nabla|x|^b)|u(\tau_0)|^2dx\right|}\\\nonumber
&&\quad\quad\quad\quad\quad\leq\frac{C}{R^{2-b}}\int_{2R\leq|x|\leq4R}|u(\tau_0)|^2dx\leq\frac{C M_\infty^2(A_\epsilon, \tau_0)}{R^{2-b-2s_c}}\\\nonumber
&&\quad\quad\quad\quad\quad\leq\frac{CM_0^2}{\epsilon A^{2-b-2s_c}\tau_0^{1-\frac{2s_c}{2-b}}}\leq\frac{CM_0^2\epsilon^{\frac{2-b-2s_c}{2-b+2s_c}}}{\epsilon A_\epsilon^{2-b-2s_c}t_0^{1-\frac{2s_c}{2-b}}},
\end{eqnarray}
where we used the relation $2t_0\leq\epsilon^{\frac{2-b}{2-b+2s_c}}\tau_0$ in the last inequality. So for $\epsilon>0$ small enough, we have
\begin{eqnarray}\label{f39}
\left|\int(|x|^b\Delta^2\phi_R+\nabla\Delta\phi_R\cdot\nabla|x|^b)|u(\tau_0)|^2dx\right|\leq\frac{CM_0^2}{\epsilon t_0^{1-\frac{2s_c}{2-b}}}\left(\frac{M_0^2}{G_\epsilon}\right)^{\frac{2-b-2s_c}{2-b+2s_c}}\leq\frac{1}{t_0^{1-\frac{2s_c}{2-b}}}.\quad
\end{eqnarray}
Therefore, putting \eqref{a10}, \eqref{b26}, \eqref{f39} all together, and using the conservation of the energy, we obtain
\begin{equation}\nonumber
|V''_{\psi_R}(t)|\leq C\left(\|\nabla u\|_{b,2}^2+|E(u_0)|+\frac{1}{t_0^{1-\frac{2s_c}{2-b}}}\right).
\end{equation}

We integrate this inequality in time from 0 to $t_0$, then it follows from \eqref{f40} and the estimate
\[\int_0^{t_0}\|\nabla u(\tau)\|^2_{b,2}d\tau\leq G_\epsilon t_0^{\frac{2 s_c}{2-b}}\]
in \eqref{f38} that
\begin{eqnarray}\nonumber
\lefteqn{|V'_{\psi_R}(0)|\leq |V'_{\psi_R}(t_0)|+C(\int_0^{t_0} \|\nabla u(\tau)\|_{b,2}^2d\tau+|E(u_0)|t_0+t_0^{\frac{2s_c}{2-b}})}\\\nonumber
&&\quad\quad\leq\frac{CM_0^2A^{2-b+2s_c}}{\epsilon^{\frac{2-b}{2-b+2s_c}}}\tau_0^{\frac{2s_c}{2-b}}+CG_\epsilon t_0^{\frac{2s_c}{2-b}}+C|E(u_0)|t_0\\\label{f41}
&&\quad\quad\leq\frac{CM_0^2A^{2-b+2s_c}}{\epsilon^{\frac{2-b}{2-b+2s_c}}}\tau_0^{\frac{2s_c}{2-b}}+C|E(u_0)|\epsilon^{\frac{2-b}{2-b+2s_c}}\tau_0,\quad\quad\quad
\end{eqnarray}
where we used \eqref{f29} and \eqref{f35} in the last inequality for all $A\geq A_\epsilon$. Notice that
\[|E(u_0)|\epsilon^{\frac{2-b}{2-b+2s_c}}+4(ps_c+2-b) E(u_0)\leq C\max\{E(u_0),\ 0\},\]
which together with \eqref{f41} yields
\begin{eqnarray}\nonumber
\lefteqn{V'_{\psi_R}(0)+4(ps_c+2-b)E(u_0)\tau_0}\\\nonumber
&&\quad\leq\frac{CM_0^2A^{2-b+2s_c}}{\epsilon^{\frac{2-b}{2-b+2s_c}}}\tau_0^{\frac{2s_c}{2-b}}+C\left(|E(u_0)|\epsilon^{\frac{2-b}{2-b+2s_c}}+4(ps_c+2-b) E(u_0)\right)\tau_0\\\nonumber
&&\quad\leq\frac{CM_0^2A^{2-b+2s_c}}{\epsilon^{\frac{2-b}{2-b+2s_c}}}\tau_0^{\frac{2s_c}{2-b}}+C\max\{E(u_0),\ 0\}\tau_0
\end{eqnarray}
for $\epsilon>0$ small enough. Hence, by hypothesis \eqref{f20}, we conclude that
\[V'_{\psi_R}(0)+4(ps_c+2-b)E(u_0)\tau_0\leq\frac{CM_0^2A^{2-b+2s_c}}{\epsilon^{\frac{2-b}{2-b+2s_c}}}\tau_0^{\frac{2s_c}{2-b}}.\]
The estimate \eqref{f30} is proved. We complete the proof of Proposition \ref{pro4.1}.

\end{proof}

Under the assumption of Proposition \ref{pro4.1} and an additional energetic constraint on the solution, we prove the lower bound on a weighted local $L^2$ norm of the initial data $u_0$.

\begin{pro}\label{pro4.2}
Let $n\geq3,\ 2-n<b\leq0,\ b-2<c<\frac{nbp}{4},\ 2(2-b)<\textbf{p}_c<(2-b)(p+2),\ p<\frac{4}{n}$ and $\sigma_0$ be defined as \eqref{f64} with $-n<\gamma\leq0$. Let $u\in C([0, \tau_\ast];\ \dot{H}^{s_c}\cap\dot{W}_b^{1,2})$ be a solution to \eqref{a0} with initial data $u_0\in\dot{H}^{s_c}\cap\dot{W}_b^{1,2}$ such
that the conditions \eqref{f20} and \eqref{f21} of Proposition \ref{pro4.1} hold. Let $\tau_0\in[0, \frac{\tau_\ast}{2}]$ and define $\lambda_u(\tau_0)=\|\nabla u\|_{b,2}^{-\frac{2-b}{2-b-2s_c}}$, we suppose
\begin{equation}\label{f42}
E(u_0)\leq\frac{1}{4\lambda_u^{2-\frac{4s_c}{2-b}}(\tau_0)}.
\end{equation}
Then there exist universal constants $C_2,\ \alpha_3>0$ depending on $n, b, p$, such that if
\begin{equation}\label{f43}
F_\ast=\frac{\tau_0^{\frac{1}{2-b}}}{\lambda_u^{\frac{2}{2-b}}(\tau_0)}\quad\quad\quad \mbox{and}\quad\quad D_\ast=M_0^{\alpha_3}\max\{1, F_\ast^{\frac{2-b+2s_c}{2-b-2s_c}}\},
\end{equation}
we have
\begin{equation}\label{f44}
\frac{1}{\lambda_u^{\frac{4s_c}{2-b}}(\tau_0)}\int_{|x|\leq D_\ast\lambda_u^{\frac{2}{2-b}}(\tau_0)}|u_0|^2dx\geq C_2.
\end{equation}

\end{pro}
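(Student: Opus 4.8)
The plan is to transplant the Merle--Rapha\"el scheme to the divergence setting: first produce a concentration of $u(\tau_0)$ by a variational argument at the single time $\tau_0$, and then run the localized virial functional \eqref{e5} backwards in time so as to convert this into a lower bound on a local $L^2$-mass of $u_0$, the gains of Proposition \ref{pro4.1} controlling the loss incurred during the transport. Writing $\mu=\lambda_u^{2/(2-b)}(\tau_0)$, I would first renormalise at time $\tau_0$ by setting
\[
v(x)=\mu^{\frac n2-s_c}\,u(\mu x,\tau_0),
\]
the critical rescaling for which $\|\nabla v\|_{b,2}=1$, while $\|\cdot\|_{\gamma,\sigma_0}$ is scale invariant by the very choice \eqref{f64} of $\sigma_0$ and the energy obeys $E(v)=\mu^{2-b-2s_c}E(u(\tau_0))=\lambda_u^{2-\frac{4s_c}{2-b}}(\tau_0)E(u_0)$ after using conservation of \eqref{c10}. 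The energetic constraint \eqref{f42} then reads $E(v)\le\frac14$, so the energy identity with $\|\nabla v\|_{b,2}=1$ forces
\[
\int|x|^c|v|^{p+2}\,dx=(p+2)\Bigl(\tfrac12-E(v)\Bigr)\ge\frac{p+2}{4},
\]
equivalently $\|v\|_{\gamma,\sigma_0}\ge c_0>0$ by the sharp inequality of Lemma \ref{pro1}.

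Next I would convert this into $L^2$-concentration measured by the Morrey--Campanato functional \eqref{f26}. Splitting $\int|x|^c|v|^{p+2}$ at a radius $R_0$ and bounding the exterior piece by the refined inequality \eqref{f5} of Lemma \ref{lem5.1}, while controlling $\rho(v,R_0)$ from above by a power of $M_0$ through the rescaled form of \eqref{f22}, one can fix $R_0\simeq M_0^{\alpha}$ so that the exterior contribution is at most $\frac{p+2}{8}$; the lower bound above then localises to $\int_{|x|\le R_0}|x|^c|v|^{p+2}\gtrsim1$, and a weighted interpolation on the annuli carrying this mass yields $\rho(v,R_1)\ge c_1>0$ for some scale $R_1\lesssim M_0^{\alpha}$. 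Undoing the scaling, $u(\tau_0)$ then carries $L^2$-mass $\gtrsim(\text{scale})^{2s_c}$ on an annulus of size $\lesssim M_0^{\alpha}\mu$, so that $V_{\psi_R}(\tau_0)$ is bounded below at the corresponding radius $R$.

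The final and most delicate step transports this bound from $\tau_0$ back to $0$. I would expand by Taylor's formula with integral remainder,
\[
V_{\psi_R}(0)=V_{\psi_R}(\tau_0)-\tau_0\,V_{\psi_R}'(0)-\int_0^{\tau_0}(\tau_0-\tau)\,V_{\psi_R}''(\tau)\,d\tau,
\]
and insert the virial bound \eqref{f4} of Lemma \ref{lem5.2}. The coercive term $-4ps_c\|\nabla u\|_{b,2}^2$ re-enters with a favourable sign after integration and absorbs the $\eta\|\nabla u\|_{b,2}^2$ part of the nonlinear remainder estimated by \eqref{f7}; the annulus remainder is handled by \eqref{f8} together with \eqref{f22}; the energy term is controlled by \eqref{f42}; and the boundary term $\tau_0V_{\psi_R}'(0)$ is estimated by the key inequality \eqref{f30} already proved inside Proposition \ref{pro4.1}, with the dispersive estimate \eqref{f23} entering precisely through the weighted time-integral of $\|\nabla u\|_{b,2}^2$. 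Choosing $R\simeq D_\ast\mu$ with $D_\ast$ as in \eqref{f43} renders every correction a fixed fraction of the concentrated lower bound, so that $V_{\psi_R}(0)\ge\frac12 V_{\psi_R}(\tau_0)$; since $\psi_R\lesssim R^{2-b}$ on its support (as in \eqref{d25}), this reads $\int_{|x|\le D_\ast\mu}|u_0|^2\,dx\gtrsim\mu^{2s_c}=\lambda_u^{4s_c/(2-b)}(\tau_0)$, which is \eqref{f44}.

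The crux is exactly this transport. The parameter $F_\ast$ of \eqref{f43} records the ratio of the time scale $\tau_0^{1/(2-b)}$ to the spatial scale $\mu$, and the interplay between the $R^{2-b+2s_c}$ homogeneity of $V_{\psi_R}$, the $R^{-(2-b-2s_c)}$ decay of the remainder, and the lowest admissible radius $R\gtrsim A_\epsilon\tau_0^{1/(2-b)}$ imposed by the range of validity of \eqref{f30} is what dictates both the power $M_0^{\alpha_3}$ and the exponent $\frac{2-b+2s_c}{2-b-2s_c}$ on $F_\ast$ in $D_\ast$. The main obstacle I expect, therefore, is the quantitative bookkeeping: keeping all constants uniform in $\tau_0$ and in the admissible window $[0,\tau_\ast/2]$ while verifying that the choice of $D_\ast$ makes the virial corrections genuinely subordinate to the concentrated mass, which is what forces the precise shape of $D_\ast$ stated in \eqref{f43}.
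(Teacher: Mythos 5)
Your Step 1 is essentially the paper's: renormalise at time $\tau_0$ so that $\|\nabla v\|_{b,2}=1$, use \eqref{f42} and conservation of energy to force $\int|x|^c|v|^{p+2}dx\geq\frac{p+2}{4}$, kill the exterior contribution with \eqref{f5} and the $\rho$-bound inherited from \eqref{f22}, and convert the interior potential energy into an $L^2$ lower bound near the origin (the paper does this last conversion concretely via a cut-off and the Gagliardo--Nirenberg inequality \eqref{f51}, where your ``weighted interpolation on the annuli'' is left vague, but the idea is the same, and the scale must be $J_\ast\gtrsim M_0^{\alpha_1}F_\ast$, not merely $M_0^{\alpha}$, for \eqref{f22} to apply after rescaling).

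The genuine gap is in your transport step. You run the Taylor expansion on $V_{\psi_R}$ with the \emph{degenerate} virial weight, $\psi_R(x)=\frac{2}{2-b}|x|^{2-b}$ on $|x|\leq 2R$, and then claim that $V_{\psi_R}(0)\geq\frac12 V_{\psi_R}(\tau_0)$ together with $\psi_R\lesssim R^{2-b}$ yields \eqref{f44}. For that implication you need the lower bound $V_{\psi_R}(\tau_0)\gtrsim R^{2-b}\lambda_u^{\frac{4s_c}{2-b}}(\tau_0)$ at $R=D_\ast\lambda_u^{\frac{2}{2-b}}(\tau_0)$, and this does \emph{not} follow from Step 1: the concentration lives on the ball $|x|\leq 2J_\ast\lambda_u^{\frac{2}{2-b}}(\tau_0)$, where the weight $|x|^{2-b}$ is of size at most $(J_\ast\lambda_u^{\frac{2}{2-b}})^{2-b}$ and vanishes at the origin, so mass sitting near the origin contributes essentially nothing to $V_{\psi_R}(\tau_0)$. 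Even after excluding concentration in tiny balls by the Hardy inequality, the best one gets is $V_{\psi_R}(\tau_0)\gtrsim\lambda_u^{\frac{2(2-b)+4s_c}{2-b}}(\tau_0)$, and dividing by $R^{2-b}=D_\ast^{2-b}\lambda_u^{2}(\tau_0)$ degrades the constant in \eqref{f44} by $D_\ast^{-(2-b)}$, a negative power of $M_0$ and $F_\ast$ --- so the conclusion is no longer the universal constant $C_2$ asserted in the Proposition. The paper avoids this entirely by transporting a \emph{flat} cut-off: it takes $\Theta_{\widetilde R}\equiv 1$ on $|x|\leq\widetilde R$, so that $V_{\Theta_{\widetilde R}}(\tau_0)\geq\int_{|x|\leq\widetilde R}|u(\tau_0)|^2dx$ directly dominates the concentrated mass, and it controls $V_{\Theta_{\widetilde R}}(\tau)-V_{\Theta_{\widetilde R}}(0)$ using only the \emph{first} derivative \eqref{b26}: since $\nabla\Theta_{\widetilde R}$ is supported in the annulus $\widetilde R\leq|x|\leq2\widetilde R$, Cauchy--Schwarz gives $|V'_{\Theta_{\widetilde R}}(\tau)|\lesssim\widetilde R^{-(\frac{2-b}{2}-s_c)}M_0\|\nabla u(\tau)\|_{b,2}$ via \eqref{f22}, and a time integration with the dispersive bound \eqref{f23} makes the error $\lesssim M_0^{1+\alpha_2/2}F_\ast^{\frac{2-b}{2}+s_c}D^{-(\frac{2-b}{2}-s_c)}$, which is where the exponent $\frac{2-b+2s_c}{2-b-2s_c}$ in $D_\ast$ actually comes from. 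Your second-order expansion, with $V''_{\psi_R}$ and the estimate \eqref{f30}, is not needed here and does not repair the loss caused by the degenerate weight.
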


\begin{proof}
\textbf{Step 1.} We first claim that there exists a universal constant $C_2>0$ such that
\begin{equation}\label{f45}
\frac{1}{\lambda_u^{\frac{4s_c}{2-b}}(\tau_0)}\int_{|x|\leq 2J_\ast\lambda_u^{\frac{2}{2-b}}(\tau_0)}|u(\tau_0)|^2dx\geq 2C_2
\end{equation}
with
\begin{equation}\label{f46}
J_\ast=C_\ast\max\{M_0^{\alpha_1}F_\ast,\ M_0^{\frac{2(4-np+2p)}{(4-np)(2-b-2s_c)}}\}
\end{equation}
for $C_\ast>1$ large enough to be chosen later.

In fact, we define a renormalization of $u(\tau_0)$ as
\begin{equation}\label{f47}
\omega(x)=\lambda_u(\tau_0)^{\frac{2(2-b+c)}{(2-b)p}}u(\lambda_u^{\frac{2}{2-b}}(\tau_0)x, \tau_0),
\end{equation}
where $\lambda_u(\tau_0)=\|\nabla u(\tau_0)\|_{b,2}^{-\frac{2-b}{2-b-2s_c}}$. Thus it follows from \eqref{f42} and the energy conservation that
\begin{equation}\nonumber
\|\nabla\omega\|_{b,2}=1\quad\quad \mbox{and}\quad\quad E(\omega)=\lambda_u^{2-\frac{4s_c}{2-b}}(\tau_0)E(u(\tau_0))\leq\frac{1}{4}.
\end{equation}
So we obtain
\begin{equation}\label{f48}
\int|x|^c|\omega|^{p+2}dx=(p+2)(\frac{1}{2}\|\nabla\omega\|_{b,2}^2-E(\omega))\geq\frac{p+2}{4}.
\end{equation}

Denote $J_\ast$ as in \eqref{f46}. By the definition of the semi-norm $\rho$ and \eqref{f47}, we obtain
\begin{eqnarray}\nonumber
\lefteqn{\rho(\omega, J_\ast)\leq\sup_{R\geq\frac{J_\ast}{C_\ast}}\frac{1}{R^{2s_c}}\int_{R\leq|x|\leq2R}|\omega|^2dx\leq\sup_{R\geq M_0^{\alpha_1}F_\ast}\frac{1}{R^{2s_c}}\int_{R\leq|x|\leq2R}|\omega|^2dx}\\\nonumber
&&\quad\ \ =\sup_{R\geq M_0^{\alpha_1}F_\ast}\frac{1}{(\lambda_u^{\frac{2}{2-b}}(\tau_0)R)^{2s_c}}\int_{\lambda_u^{\frac{2}{2-b}}(\tau_0)R\leq|x|\leq2\lambda_u^{\frac{2}{2-b}}(\tau_0)R}|u(\tau_0)|^2dx\\\nonumber
&&\quad\ \ =\sup_{R\geq \lambda_u^{\frac{2}{2-b}}(\tau_0)M_0^{\alpha_1}F_\ast}\frac{1}{R^{2s_c}}\int_{R\leq|x|\leq2R}|u(\tau_0)|^2dx\\\nonumber
&&\quad\ \ =\rho(u(\tau_0), M_0^{\alpha_1}\tau_0^{\frac{1}{2-b}})\leq C_1M_0^2,
\end{eqnarray}
where the last two steps are obtained by the definition of $F_\ast$ and the estimate \eqref{f22} in Proposition \ref{pro4.1}.
Thus, injecting the above inequality to the estimate \eqref{f5} in Lemma \ref{lem5.1}, we further have
\begin{eqnarray}\nonumber
\lefteqn{\int_{|x|\geq J_\ast}|x|^c|\omega|^{p+2}dx\leq \eta\|\nabla \omega\|_{b,2}^2+\frac{C_\eta}{J_\ast^{2-b-2s_c}}\rho(\omega, J_\ast)^{\frac{4-np+2p}{4-np}}}\\\label{f49}
&&\quad\quad\quad\quad\quad\quad\ \ \leq \eta+\frac{C_\eta}{J_\ast^{2-b-2s_c}}M_0^{\frac{2(4-np+2p)}{4-np}}\leq2\eta\quad\quad\quad\quad
\end{eqnarray}
for $C_\ast>1$ large enough. Therefore, we deduce from \eqref{f48} and \eqref{f49} that
\begin{equation}\label{f52}
\int_{|x|\leq J_\ast}|x|^c|\omega|^{p+2}dx\geq\frac{p+2}{8}.
\end{equation}

Now, we rewrite the Gagliardo-Nirenberg inequality \eqref{f63} as
\begin{equation}\label{f51}
\int|x|^c|f|^{p+2}dx\leq C\|\nabla f\|_{b,2}^{2+\frac{2ps_c}{2-b}}\|f\|_2^{p-\frac{2ps_c}{2-b}},
\end{equation}
which can be used to deduce a lower bound \eqref{f45} on the $L^2$ norm of $u(\tau_0)$ around the origin. More precisely, Let $\Theta\in C_0^\infty(\Bbb{R}^n)$ be a cut-off function such that
\begin{equation}\Theta(x)=
\left\{
\begin{aligned}
 & 1,\quad\ \mbox{if}\ |x|\leq 1,\\\label{f53}
 & 0,\quad\ \mbox{if}\ |x|\geq2,
\end{aligned}\right.  \quad\quad \mbox{and}\quad\quad \Theta_A(x)=\Theta\left(\frac{x}{A}\right).
\end{equation}
Then one can easily see that
\begin{eqnarray}\nonumber
\lefteqn{\|\nabla(\Theta_{J_\ast}\omega)\|_{b,2}\leq\|\omega\nabla\Theta_{J_\ast}\|_{b,2}+\|\Theta_{J_\ast}\nabla\omega\|_{b,2}}\\\nonumber
&&\quad\quad\quad\quad\ \leq C\left((\int_{|x|\leq2J_\ast}|\omega|^2dx)^{\frac{1}{2}}+1\right).
\end{eqnarray}
Thus, combining the above inequality with \eqref{f51}, we have
\begin{eqnarray}\nonumber
\lefteqn{\int_{|x|\geq J_\ast}|x|^c|\omega|^{p+2}dx\leq\int|x|^c|\Theta_{J_\ast}\omega|^{p+2}dx\leq C\|\nabla(\Theta_{J_\ast}\omega)\|_{b,2}^{2+\frac{2ps_c}{2-b}}\|\Theta_{J_\ast}\omega\|_2^{p-\frac{2ps_c}{2-b}}}\\\nonumber
&&\quad\quad\quad\quad\quad\quad\ \ \leq C(\int_{|x|\leq2J_\ast}|\omega|^2dx)^{\frac{p+2}{2}}+C(\int_{|x|\leq2J_\ast}|\omega|^2dx)^{\frac{p(2-b-2s_c)}{2(2-b)}}.\quad\quad\quad\quad
\end{eqnarray}
It follows from \eqref{f52}  that
\begin{equation}\nonumber
0<2C_2\leq\int_{|x|\leq2J_\ast}|\omega|^2dx=\frac{1}{\lambda_u^{\frac{4s_c}{2-b}}(\tau_0)}\int_{|x|\leq 2J_\ast\lambda_u^{\frac{2}{2-b}}(\tau_0)}|u(\tau_0)|^2dx
\end{equation}
for some constant $C_2>0$ depending on $b$ and $J_\ast$. This arrives to \eqref{f45}.

\textbf{Step 2.} The estimate \eqref{f45} can also be verified at the initial time $\tau=0$. To do it, we set
\begin{equation}\nonumber
D_\epsilon=C_\epsilon\max\{F_\ast,\ F_\ast^{\frac{2-b+2s_c}{2-b-2s_c}}\}\max\{M_0^{\alpha_1},\ M_0^{\frac{2+\alpha_2}{2-b-2s_c}}\}
\end{equation}
and for all $D\geq D_\epsilon$,
\begin{equation}\label{f54}
\widetilde{R}=\widetilde{R}(D, \tau_0)=D\lambda_u^{\frac{2}{2-b}}(\tau_0),
\end{equation}
where $\alpha_2>0$ is chosen to satisfy $D_\epsilon\geq J_\ast$. Then we claim that for any $\epsilon>0$, there exists $C_\epsilon>0$ such that the following inequality
\begin{equation}\label{f55}
\frac{1}{\lambda_u^{\frac{4s_c}{2-b}}(\tau_0)}\left|\int\Theta_{\widetilde{R}}|u(\tau_0)|^2dx-\int\Theta_{\widetilde{R}}|u_0|^2dx\right|<\epsilon
\end{equation}
holds, where $\Theta_{\widetilde{R}}$ is defined as \eqref{f53}.

Indeed, we invoke the virial quantity \eqref{b26} with $\psi_R=\Theta_{\widetilde{R}}$ to obtain
\begin{eqnarray}\nonumber
\lefteqn{|V'_{\Theta_{\widetilde{R}}}(\tau)|=2|\textmd{Im}\int|x|^b\nabla\Theta_{\widetilde{R}}\cdot\nabla u(\tau)\overline{u}(\tau)dx|}\\\nonumber
&&\quad\quad\ \leq\frac{C}{\widetilde{R}^{\frac{2-b}{2}}}\|\nabla u(\tau)\|_{b,2}(\int_{\widetilde{R}\leq|x|\leq2\widetilde{R}}|u(\tau)|^2dx)^{\frac{1}{2}}\\\label{f56}
&&\quad\quad\ \leq\frac{C}{\widetilde{R}^{\frac{2-b}{2}-s_c}}\|\nabla u(\tau)\|_{b,2}\rho(u(\tau), \widetilde{R})^{\frac{1}{2}}.
\end{eqnarray}
Given $\epsilon>0$, for all $C_\epsilon>1$, it follows from \eqref{f43}, \eqref{f54} and the definition of $D_\epsilon$ that
\begin{equation}\nonumber
\widetilde{R}\geq\frac{D_\epsilon}{F_\ast}\tau_0^{\frac{1}{2-b}}\geq\frac{D_\epsilon}{F_\ast}\tau^{\frac{1}{2-b}}\geq M_0^{\alpha_1}\tau^{\frac{1}{2-b}},\quad\quad \forall\ \tau\in[0, \tau_0],
\end{equation}
which together with \eqref{f22} yields
\begin{equation}\label{f57}
\rho(u(\tau), \widetilde{R})\leq \rho(u(\tau), M_0^{\alpha_1}\tau^{\frac{1}{2-b}})\leq C_1M_0^2, \quad\quad \forall\ \tau\in[0, \tau_0].
\end{equation}
So, combining \eqref{f56} with \eqref{f57}, we obtain
\[|V'_{\Theta_{\widetilde{R}}}(\tau)|\leq\frac{CM_0}{\widetilde{R}^{\frac{2-b}{2}-s_c}}\|\nabla u(\tau)\|_{b,2}.\]
Now, we integrate this inequality from 0 to $\tau_0\in[0, \frac{\tau_\ast}{2}]$, divide the resulting inequality by $\lambda_u^{\frac{4s_c}{2-b}}(\tau_0)$ to get
\begin{eqnarray}\nonumber
\lefteqn{\frac{1}{\lambda_u^{\frac{4s_c}{2-b}}(\tau_0)}\left|V_{\Theta_{\widetilde{R}}}(\tau_0)-V_{\Theta_{\widetilde{R}}}(0)\right|\leq(\frac{D}{\widetilde{R}})^{2s_c}\frac{CM_0}{\widetilde{R}^{\frac{2-b}{2}-s_c}}\int_0^{\tau_0}\|\nabla u(\tau)\|_{b,2}d\tau}\\\nonumber
&&\quad\quad\quad\quad\quad\quad\quad\quad\quad\quad\ \ \leq\frac{CD^{2s_c}M_0}{\widetilde{R}^{\frac{2-b}{2}+s_c}}(\tau_0\int_0^{\tau_0}\|\nabla u(\tau)\|^2_{b,2}d\tau)^{\frac{1}{2}}\\\nonumber
&&\quad\quad\quad\quad\quad\quad\quad\quad\quad\quad\ \ \leq\frac{CD^{2s_c}M_0}{\widetilde{R}^{\frac{2-b}{2}+s_c}}(\int_0^{2\tau_0}(2\tau_0-\tau)\|\nabla u(\tau)\|^2_{b,2}d\tau)^{\frac{1}{2}}.
\end{eqnarray}
Thus, using the estimate \eqref{f23} in Proposition \ref{pro4.1}, we deduce that
\begin{eqnarray}\nonumber
\lefteqn{\frac{1}{\lambda_u^{\frac{4s_c}{2-b}}(\tau_0)}\left|V_{\Theta_{\widetilde{R}}}(\tau_0)-V_{\Theta_{\widetilde{R}}}(0)\right|\leq CM_0^{1+\frac{\alpha_2}{2}}D^{2s_c}\left(\frac{\tau_0^{\frac{1}{2-b}}}{\widetilde{R}}\right)^{\frac{2-b}{2}+s_c}}\\\nonumber
&&\quad\quad\quad\quad\quad\quad\quad\quad\quad\quad\quad=CM_0^{1+\frac{\alpha_2}{2}}\frac{F_\ast^{\frac{2-b}{2}+s_c}}{D^{\frac{2-b}{2}-s_c}},\quad\quad\quad\quad\quad%\leq\frac{C}{C_\epsilon^{\frac{2-b}{2}-s_c}},
\end{eqnarray}
where we used the definitions \eqref{f43} and \eqref{f54} in the last identity. Thus \eqref{f55} follows from the definition of $D_\epsilon$ for all $D\geq D_\epsilon$ and $C_\epsilon>1$ large enough.

Finally, we are able to give the proof of \eqref{f44}. Putting \eqref{f53}, \eqref{f55} all together, we have
\begin{equation}\label{f72}
\frac{1}{\lambda_u^{\frac{4s_c}{2-b}}(\tau_0)}\int_{|x|\leq 2\widetilde{R}}|u_0|^2dx\geq\frac{1}{\lambda_u^{\frac{4s_c}{2-b}}(\tau_0)}\int\Theta_{\widetilde{R}}|u(\tau_0)|^2dx-\epsilon
\end{equation}
for $\epsilon>0$ small enough. Thus, by choosing $\alpha_3>0$ such that
\[M_0^{\alpha_3}\max\{1, F_\ast^{\frac{2-b+2s_c}{2-b-2s_c}}\}\geq2D_\epsilon,\]
we use the definition of $D_\ast$ and \eqref{f45}, \eqref{f72} to get
\[\frac{1}{\lambda_u^{\frac{4s_c}{2-b}}(\tau_0)}\int_{|x|\leq D_\ast\lambda_u^{\frac{2}{2-b}}(\tau_0)}|u_0|^2dx\geq\frac{1}{\lambda_u^{\frac{4s_c}{2-b}}(\tau_0)}\int_{|x|\leq 2D_\epsilon\lambda_u^{\frac{2}{2-b}}(\tau_0)}|u(\tau_0)|^2dx\geq C_2>0\]
for all $D_\epsilon\geq J_\ast$. Hence, we conclude \eqref{f44}. The proof of Proposition \ref{pro4.2} is completed.

\end{proof}

\section{Blow up in $\dot{H}^{s_c}\cap\dot{W}^{1,2}_b$ }
\subsection{Proof of Theorem \ref{thm2}.}
In this part, we give the proof of Theorem \ref{thm2}, which follows as a direct consequence of Propositions \ref{pro4.1} and \ref{pro4.2}.

\textbf{Proof of Theorem \ref{thm2}.} Let $u_0\in\dot{H}^{s_c}\cap\dot{W}^{1,2}_b$ and $E(u_0)\leq0$. We argue by contradiction and assume that there exists a global solution $u\in C([0, +\infty);\ \dot{H}^{s_c}\cap\dot{W}^{1,2}_b)$ to the dINLS. By Lemma \ref{pro1} and the energy conservation, we have
\begin{equation}\label{f80}
E(u_0)\geq \frac{1}{2}\|\nabla u\|_{b,2}^2\left(1-\frac{\|u\|^p_{\gamma, \sigma_0}}{\|\mathcal{Q}\|^p_{\gamma, \sigma_0}}\right),
\end{equation}
which together with $E(u_0)\leq0$ yields that $u_0$ satisfies the conditions \eqref{f20} and \eqref{f21}. Thus, we apply Proposition \ref{pro4.1} to obtain
\[\int_0^{\tau_\ast}(\tau_\ast-\tau)\|\nabla u(\tau)\|_{b,2}^2d\tau\leq C_{u_0}\tau_\ast^{1+\frac{2s_c}{2-b}},\quad\quad \forall\ \tau_\ast>0.\]
Therefore, there exists a sequence $\{\tau_n\}_{n\in\Bbb{N}}$ such that
\begin{equation}\label{f61}
\|\nabla u(\tau_n)\|_{b,2}\leq\frac{C_{u_0}}{\tau_n^{\frac{2-b-2s_c}{2(2-b)}}},\quad\quad \forall\ n\in\Bbb{N}.
\end{equation}
Since $\lambda_u(\tau)=\|\nabla u(\tau)\|_{b,2}^{-\frac{2-b}{2-b-2s_c}}$, we further have
\begin{equation}\nonumber
\lambda_u(\tau)\geq C_{u_0}\sqrt{\tau_n}.
\end{equation}

Now, setting $\tau_0=\tau_n$ in Proposition \ref{pro4.2}, we deduce from the above inequality that
\[F_\ast=F_n=\frac{\tau_n^{\frac{1}{2-b}}}{\lambda_u^{\frac{2}{2-b}}(\tau_n)}\leq C_{u_0}.\]
So, the conditions \eqref{f42} and \eqref{f43} are all satisfied from $E(u_0)\leq0$. Therefore, by Proposition \ref{pro4.2}, there exists a $D_\ast$ independent of $n$ such that
\begin{equation}\label{f60}
\frac{1}{\lambda_u^{\frac{4s_c}{2-b}}(\tau_0)}\int_{|x|\leq D_\ast\lambda_u^{\frac{2}{2-b}}(\tau_0)}|u_0|^2dx\geq C_2>0.
\end{equation}

On the other hand,  since $\lambda_u(\tau_n)\rightarrow+\infty$ as $n\rightarrow+\infty$ due to \eqref{f61}, by picking $R=D_\ast\lambda_u^{\frac{2}{2-b}}(\tau_n)$ in Lemma \ref{lem5.1}, we get
\begin{equation}\nonumber
\lim_{n\rightarrow+\infty}\frac{1}{(D_\ast\lambda_u^{\frac{2}{2-b}}(\tau_n))^{2s_c}}\int_{|x|\leq D_\ast\lambda_u^{\frac{2}{2-b}}(\tau_n)}|u_0|^2dx=0,
\end{equation}
which contradicts with \eqref{f60}. Hence, the maximal existence time $T^\ast$ of the solution to \eqref{a0} is finite. We conclude the proof of Theorem \ref{thm2}.

\subsection{Lower bound for the blow-up rate}
With Propositions \ref{pro4.1} and \ref{pro4.2} in hand, we are able to give the proof of Theorem \ref{thm3}.

\textbf{Proof of Theorem \ref{thm3}.}
Given $u_0\in \dot{H}^{s_c}\cap \dot{W}_b^{1,2}$, let $u$ be the corresponding finite time blow-up solutions to \eqref{a0} with $0<T^\ast<+\infty$. We deduce from \eqref{f92} that
\begin{equation}\label{f77}
\lambda_u(t)\leq C\sqrt{T^\ast-t},
\end{equation}
where $\lambda_u(t)=\|\nabla u(t)\|_{b,2}^{-\frac{2-b}{2-b-2s_c}}$ is defined as in Proposition \ref{pro4.2}.

For $t$ close enough to $T^\ast$, we consider the following renormalization of $u$:
\begin{equation}\label{f84}
v^{(t)}(x, \tau)=\lambda_u(t)^{\frac{2(2-b+c)}{(2-b)p}}\overline{u}(\lambda^{\frac{2}{2-b}}_u(t)x,\ t-\lambda^2_u(t)\tau)
\end{equation}
and write $v=v^{(t)}$ for simplicity. By a straightforward calculation, we obtain
\begin{equation}\label{f76}
\|v(0)\|_{\gamma, \sigma_0}=\lambda_u(t)^{\frac{2(2-b+c)}{(2-b)p}}(\int|x|^\gamma|u(\lambda^{\frac{2}{2-b}}_u(t)x,\ t)|^{\sigma_0}dx)^{\frac{1}{\sigma_0}}=\|u(t)\|_{\gamma, \sigma_0}
\end{equation}
and
\begin{equation}\label{f79}
\|\nabla v(0)\|_{b,2}=1\quad\quad \mbox{and}\quad \quad E(v(0))=\lambda^{2-\frac{4s_c}{2-b}}_u(t)E(u_0).
\end{equation}
Moreover, from the scaling symmetry $u(x, t)\mapsto\lambda^{\frac{2(2-b+c)}{(2-b)p}}u(\lambda^{\frac{2}{2-b}}x, \lambda^2t)$, one can easily check that $v: \tau\mapsto v(\tau)$ is the solution to \eqref{a0} on the time interval $[0, \frac{t}{\lambda^2_u(t)}]$.

To prove \eqref{f75}, if we set
\begin{equation}\label{}
N(t)=-\log\lambda_u(t),\quad\mbox{or\ equivalently}\quad e^{N(t)}=\frac{1}{\lambda_u(t)},
\end{equation}
then it follows from \eqref{f77} that
\[N(t)\geq C|\log(T^\ast-t)|.\]
So, together with \eqref{f76}, it suffices to prove
\begin{equation}\label{f78}
\|v(0)\|_{\gamma, \sigma_0}\geq N(t)^l
\end{equation}
for some universal constant $l=l(n,b,c,p,\gamma)>0$ and $t$ close enough to $T^\ast$.

Now, we split the proof of \eqref{f78} into the following three steps.

\textbf{Step 1.} We claim that the rescaled solution $v(\tau_0)$ satisfies all the conditions of Propositions \ref{pro4.1} and \ref{pro4.2} for any $\tau_0\in[0, e^{N(t)}]$.

In fact, with $\tau_\ast=\frac{2}{\lambda_u(t)}$, since $\tau_\ast\leq\frac{t}{\lambda^2_u(t)}$ due to \eqref{f77}, $v(\tau)$ is the solution to \eqref{a0} on $[0,\tau_\ast]$.
By \eqref{f79}, we have
\[\tau_\ast^{1-\frac{2s_c}{2-b}}|E(v(0))|=(2\lambda_u(t))^{1-\frac{2s_c}{2-b}}|E(u_0)|\rightarrow0 \quad\mbox{as}\ \  t\rightarrow T^\ast,\]
thus \eqref{f20} follows. Recalling the estimate \eqref{f80} and using \eqref{f79}, we get%Since $E(v(0))\rightarrow0$, as $t\rightarrow T^\ast$, and using and \eqref{f79} to get
\begin{equation}\nonumber
E(v(0))\geq \frac{1}{2}\left(1-\frac{\|v(0)\|^p_{\gamma, \sigma_0}}{\|\mathcal{Q}\|^p_{\gamma, \sigma_0}}\right),
\end{equation}
which implies \eqref{f21}. Thus, for the universal constant $\alpha_2$ given in \eqref{f23}, we write
\begin{equation}\label{f81}
M(t)=\frac{4\|v(0)\|_{\gamma, \sigma_0}}{\|\mathcal{\mathcal{Q}}\|_{\gamma, \sigma_0}}\geq2\quad\mbox{and}\quad L(t)=(100M(t)^{\alpha_2})^{\frac{2-b}{2(2-b-2s_c)}},
\end{equation}
which will be needed in the next step.

On the other hand, for all $\tau_0\in[0, e^{N(t)}]$, we deduce $\tau_0\in[0, \frac{\tau_\ast}{2}]$. Moreover, \eqref{f84} implies that
\[\lambda_v(\tau_0)=(\lambda_u(t)\|\nabla u(t-\lambda_u^2(t)\tau_0)\|_{b,2})^{-\frac{2-b}{2-b-2s_c}}=\frac{\lambda_u(t-\lambda_u^2(t)\tau_0)}{\lambda_u(t)},\]
where $t-\lambda_u^2(t)\tau_0\geq t-\lambda_u(t)\rightarrow T^\ast$, as $t\rightarrow T^\ast$. This together with \eqref{f79} yields
\[\lambda_v^{2-\frac{4s_c}{2-b}}(\tau_0)|E(v(0))|=\lambda_u^{2-\frac{4s_c}{2-b}}(t-\lambda_u^2(t)\tau_0)|E(u_0)|\rightarrow0\quad  \mbox{as}\ \ t\rightarrow T^\ast.\]
So, $v(\tau_0)$ also satisfies the conditions of Proposition \ref{pro4.2}.

\textbf{Step 2.} By the definition of $L(t)$, we consider two cases:

If $L(t)\geq e^{\frac{\sqrt{N(t)}}{2}}$, it follows from \eqref{f81} that
\[\|v(0)\|_{\gamma, \sigma_0}=CL(t)^{\frac{2(2-b-2s_c)}{(2-b)\alpha_2}}\geq Ce^{\frac{(2-b-2s_c)\sqrt{N(t)}}{(2-b)\alpha_2}}\geq\sqrt{N(t)},\]
then \eqref{f78} is proved.

If $L(t)<e^{\frac{\sqrt{N(t)}}{2}}$. To our aim \eqref{f78}, we need to prove that there exists $\tau_i\in[0, e^{i}]$ such that
\begin{equation}\label{f82}
F(\tau_i)\leq L(t)^{\frac{2}{2-b}}\quad\mbox{and}\quad \frac{e^{\frac{i-1}{2}}}{10L(t)}\leq\lambda_v(\tau_i)\leq \frac{10e^{\frac{i}{2}}}{L(t)}
\end{equation}
for all $i\in[\sqrt{N(t)}, N(t)]$, where $F(\tau)=\frac{\tau^{\frac{1}{2-b}}}{\lambda_v^{\frac{2}{2-b}}(\tau)}$ is defined as in \eqref{f43}.

We postpone the argument of \eqref{f82} to the third step. Assume \eqref{f82} holds true. Then there exists a universal constant $\alpha_4=\alpha_4(b, s_c, \alpha_2)>0$ such that
\[D_\ast=M(t)^{\alpha_3}\max\{1, F(\tau_i)^{\frac{2-b+2s_c}{2-b-2s_c}}\}\leq M(t)^{\alpha_4},\]
so we apply \eqref{f44} of Proposition \ref{pro4.2} to $v(\tau_i)$ and get
\begin{equation}\label{f89}
\frac{1}{\lambda_v^{\frac{4s_c}{2-b}}(\tau_i)}\int_{|x|\leq M(t)^{\alpha_4}\lambda_v^{\frac{2}{2-b}}(\tau_i)}|v(0)|^2dx\geq C_2, \quad\quad \forall\ i\in[\sqrt{N(t)}, N(t)].
\end{equation}
Now by defining the annulus
\begin{equation}\nonumber
\mathcal{C}_i=\{x:\quad \frac{\lambda_v^{\frac{2}{2-b}}(\tau_i)}{M(t)^{\alpha_4}}\leq|x|\leq M(t)^{\alpha_4}\lambda_v^{\frac{2}{2-b}}(\tau_i)\},
\end{equation}
we claim that there exists a constant $C_4=C_4(\sigma_0)>0$ such that
\begin{equation}\label{f86}
\int_{\mathcal{C}_i}|x|^\gamma|v(0)|^{\sigma_0}dx\geq\frac{C_4}{M(t)^{\alpha_4s_c\sigma_0}},\quad\quad \forall\ i\in[\sqrt{N(t)}, N(t)].
\end{equation}

To do it, we use the H\"{o}lder inequality to obtain
\[\int_{|x|\leq \frac{\lambda_v^{\frac{2}{2-b}}(\tau_i)}{M(t)^{\alpha_4}}}|v(0)|^2dx\leq \frac{C\lambda_v^{\frac{4s_c}{2-b}}(\tau_i)}{M(t)^{2s_c\alpha_4}}\|v(0)\|^2_{\gamma, \sigma_0},\]
which together with \eqref{f89} yields
\begin{eqnarray}\nonumber
\lefteqn{\int_{\mathcal{C}_i}|v(0)|^2dx=\int_{|x|\leq M(t)^{\alpha_4}\lambda_v^{\frac{2}{2-b}}(\tau_i)}|v(0)|^2dx-\int_{|x|\leq \frac{\lambda_v^{\frac{2}{2-b}}(\tau_i)}{M(t)^{\alpha_4}}}|v(0)|^2dx}\\\label{f85}
&&\quad\quad\quad\quad\geq\lambda_v^{\frac{4s_c}{2-b}}(\tau_i)\left(C_2-\frac{CM(t)^2}{M(t)^{2s_c\alpha_4}}\right)\geq\lambda_v^{\frac{4s_c}{2-b}}(\tau_i)\frac{C_2}{2}
\end{eqnarray}
for $\alpha_4>0$ large enough.  Applying the H\"{o}lder inequality again to the left hand side of \eqref{f85}, we obtain
\[\int_{\mathcal{C}_i}|v(0)|^2dx\leq C\lambda_v^{\frac{4s_c}{2-b}}(\tau_i)M(t)^{2s_c\alpha_4}(\int_{\mathcal{C}_i}|x|^\gamma|v(0)|^{\sigma_0}dx)^{\frac{2}{\sigma_0}}. \]
Therefore, combining this last inequality with \eqref{f85}, we conclude \eqref{f86}.

To this end, let $p(t)>1$ be an integer such that
\begin{equation}\label{f88}
10^{\frac{6}{2-b}}M(t)^{2\alpha_4}\leq e^{\frac{p(t)}{2-b}}\leq10^{\frac{16}{2-b}}M(t)^{2\alpha_4}.
\end{equation}

If $p(t)<\sqrt{N(t)}$, then for any $(i, i+p(t))\in [\sqrt{N(t)}, N(t)]\times [\sqrt{N(t)}, N(t)]$, we deduce from \eqref{f82} and \eqref{f88} that
\[\lambda_v^{\frac{2}{2-b}}(\tau_{i+p(t)})\geq\frac{e^{\frac{i+p(t)-1}{2-b}}}{(10L(t))^{\frac{2}{2-b}}}\geq M(t)^{2\alpha_4}\frac{10^{\frac{4}{2-b}}e^{\frac{i-1}{2-b}}}{L(t)^{\frac{2}{2-b}}}\geq M(t)^{2\alpha_4}\lambda_v^{\frac{2}{2-b}}(\tau_{i}).\]
This shows that $\mathcal{C}_i\cap\mathcal{C}_{i+p(t)}=\emptyset$. Therefore, there are at least $\frac{N(t)}{10p(t)}>\frac{\sqrt{N(t)}}{10}$ disjoint annuli satisfying the uniform lower bound \eqref{f86}. Thus we sum over these sets and obtain
\[\|v(0)\|_{\gamma, \sigma_0}\geq\sum_{k=0}^{\frac{N(t)}{10p(t)}}\int_{\mathcal{C}_{\sqrt{N(t)}+kp(t)}}|x|^\gamma|v(0)|^{\sigma_0}dx\geq\frac{C_4\sqrt{N(t)}}{10M(t)^{\alpha_4s_c\sigma_0}}=\frac{C_4\sqrt{N(t)}}{\|v(0)\|^{\alpha_4s_c\sigma_0}_{\gamma, \sigma_0}}.\]
Otherwise, if $p(t)>\sqrt{N(t)}$, then by \eqref{f81},
\[\|v(0)\|^{2\alpha_4}_{\gamma, \sigma_0}=CM(t)^{2\alpha_4}\geq Ce^{\frac{p(t)}{2-b}}\geq Cp(t)\geq C\sqrt{N(t)},\]
which concludes the proof of \eqref{f78}.

\textbf{Step 3.} We are left with proving \eqref{f82}. We consider the set
\[\mathcal{S}=\{\tau\in[0, e^{i}]:\ \lambda_v(\tau)>\frac{e^{\frac{i+1}{2}}}{L(t)}\}.\]
Noting from \eqref{f79} that
\[\lambda_v(0)=1<\frac{e^{\frac{\sqrt{N(t)}}{2}}}{L(t)}\leq\frac{e^{\frac{i+1}{2}}}{L(t)}.\]
Thus $0\notin\mathcal{S}$.
If $\mathcal{S}$ is nonempty, the continuity of $\lambda_v(\tau)$ implies that there exists a $\tau_i\in[0, e^{i}]$ such that $\lambda_v(\tau_i)=\frac{e^{\frac{i+1}{2}}}{L(t)}\in \left[\frac{e^{\frac{i-1}{2}}}{10L(t)}, \frac{10e^{\frac{i}{2}}}{L(t)}\right]$. Furthermore,
\[F(\tau_i)=\left(\frac{\tau_i}{\lambda_v^2(\tau_i)}\right)^{\frac{1}{2-b}}\leq L(t)^{\frac{2}{2-b}},\]
which arrives at \eqref{f82}. If $\mathcal{S}$ is empty, then for all $\tau\in[0, e^{i}]$, we have
\[\lambda_v(\tau)\leq\frac{e^{\frac{i+1}{2}}}{L(t)}\leq\frac{10e^{\frac{i}{2}}}{L(t)}.\]

To show the bound of $F(\tau_i)$ in \eqref{f82}, we assume by contradiction that
\[F(\varsigma)\geq L(t)^{\frac{2}{2-b}},\quad\ \mbox{i.e.}\quad\   \|\nabla u(\varsigma)\|_{b,2}\geq \left(\frac{L(t)^{2}}{\varsigma}\right)^{\frac{2-b-2s_c}{2(2-b)}} \]
for all $\varsigma\in [e^{i-1}, e^i]$. By Proposition \ref{pro4.1}, we have
\begin{eqnarray}\nonumber
\lefteqn{M_0^{\alpha_2}e^{i(1+\frac{2 s_c}{2-b})}\geq\int_0^{e^i}(e^i-\varsigma)\|\nabla u(\varsigma)\|_{b,2}^2d\varsigma\geq\int_{e^{i-1}}^{\frac{e^i}{2}}\varsigma\|\nabla u(\varsigma)\|_{b,2}^2d\varsigma}\\\nonumber
&&\quad\quad\quad\quad\geq L(t)^{\frac{2(2-b-2s_c)}{2-b}}\int_{e^{i-1}}^{\frac{e^i}{2}}\frac{\varsigma}{\varsigma^{\frac{2-b-2s_c}{2-b}}}d\varsigma\\\nonumber
&&\quad\quad\quad\quad>\frac{L(t)^{\frac{2(2-b-2s_c)}{2-b}}}{100}e^{i(1+\frac{2s_c}{2-b})}=M(t)^{\alpha_2}e^{i(1+\frac{2 s_c}{2-b})},\quad\quad\quad
\end{eqnarray}
which is a contradiction, where we have used the definition of $L(t)$ in \eqref{f81}. Therefore, there exists a $\tau_i\in[e^{i-1}, e^{i}]$ such that $F(\tau_i)\leq L(t)^{\frac{2}{2-b}}$. We further have
\[\lambda_v(\tau_i)=\frac{\sqrt{\tau_i}}{F(\tau_i)^{\frac{2-b}{2}}}\geq\frac{e^{\frac{i-1}{2}}}{10L(t)}.\]
So, \eqref{f82} is proved.
We complete the proof of Theorem \ref{thm3}.

\subsection*{Acknowledgments}
This work is supported by the National Natural Science Foundation of China (Grant No. 12401147), and the Nature Science Foundation of Zhejiang Province (Grant No. LQ21A010011).

\subsection*{Declarations}
\textbf{Competing Interests}
The authors have no conflicts to disclose.

\vspace{6mm}


\begin{thebibliography}{60}
\bibitem{AC} A.H. Ardila, M. Cardoso, Blow-up solutions and strong instability of ground states for the inhomogeneous nonlinear Schr\"{o}dinger equation, {\it Commun. Pure Appl. Anal.}, {\bf 20(1)} (2021), 101-119. https://doi.org/10.3934/cpaa.2020259
\bibitem{AT} L. Aloui, S. Tayachi, Local well-posedness for the inhomogeneous nonlinear Schr\"{o}dinger equation, {\it Discrete Contin. Dyn. Syst.}, {\bf 41(11)} (2021), 5409-5437. https://doi.org/10.3934/dcds.2021082
\bibitem{BL} R.B. Bai, B. Li, Finite time/infinite time blow-up behaviors for the inhomogeneous nonlinear Schr\"{o}dinger equation, {\it Nonlinear Anal.}, {\bf 232} (2023), 113266. https://doi.org/10.1016/j.na.2023.113266

\bibitem{BCD} V. Banica, R. Carles, T. Duyckaerts, Minimal blow-up solutions to the mass-critical inhomogeneous NLS equation, {\it Commun. Partial Differ Equ.}, {\bf 36(3)} (2011), 487-531. https://doi.org/10.1080/03605302.2010.513410


\bibitem{BPVT} J. Belmonte-Beitia, V.M. P\'{e}rez-Garc\'{i}a, V. Vekslerchik, P.J. Torres, Lie symmetries and solitons in nonlinear systems with spatially inhomogeneous nonlinearities, {\it Physical review letters}, {\bf 98(6)} (2007), 064102. https://doi.org/
    10.1103/PhysRevLett.98.064102

\bibitem{AB} A. Bensouilah, $L^2$ concentration of blow-up solutions for the mass-critical NLS with inverse-square potential, {\it Bull. Belg. Math. Soc. Simon Stevin}, {\bf 26(5)} (2019), 759-771.  https://doi.org/10.36045/bbms/1579402821

\bibitem{CKN} L. Caffarelli, R. Kohn, L. Nirenberg. First order interpolation inequalities with weights, {\it Compos. Math.}, {\bf 53} (1984), 259-275.
\bibitem{CC} L. Campos, M. Cardoso, On the critical norm concentration for the inhomogeneous nonlinear Schr\"{o}dinger equations, {\it J. Dyn. Differ. Equ.}, {\bf 34} (2022), 2347-2369. https://doi.org/10.1007/s10884-022-10151-4
\bibitem{CF} M. Cardoso, L.G. Farah, Blow-up of radial solutions for the intercritical inhomogeneous NLS equation, {\it J. Funct. Anal.}, {\bf 281} (2021), 109134. https://doi.org/10.101
    6/j.jfa.2021.109134
\bibitem{CG2} V. Combet, F. Genoud, Classification of minimal mass blow-up solutions for an $L^2$ critical inhomogeneous NLS, {\it J. Evol. Equ.}, {\bf 16(2)} (2016), 483-500. https://doi.org
    /10.1007/s00028-015-0309-z
\bibitem{VDD} V.D. Dihn, A study on blowup solutions to the focusing $L^2$-supercritical nonlinear fractional Schr\"{o}dinger equation, {\it J. Math. Phys.}, {\bf 59(7)} (2018), 071506. https://doi.org/10.1063/1.5027713
\bibitem{DVD} V.D. Dinh, Blowup of $H^1$ solutions for a class of the focusing inhomogeneous nonlinear Schr\"{o}dinger equation, {\it Nonlinear Anal.}, {\bf 174} (2018), 169-188. https://doi.
    org/10.1016/j.na.2018.04.024
\bibitem{LGF} L.G. Farah, Global well-posedness and blow-up on the energy space for the inhomogeneous nonlinear Schr\"{o}dinger equation, {\it J. Evol. Equ.}, {\bf 16(1)} (2016), 193-208. https://doi.org/10.1007/s00028-015-0298-y
\bibitem{DF} D. Freitag, Real interpolation of weighted $L^p$-spaces, {\it Math. Nachr.}, {\bf 86}, 15-18, (1978).
\bibitem{GF2} F. Genoud, An inhomogeneous, $L^2$-critical, nonlinear Schr\"{o}dinger equation, {\it Z. Anal. Anwend}, {\bf 31(3)} (2012), 283-290. https://doi.org/10.4171/ZAA/1460
\bibitem{GS} F. Genoud, C.A. Stuart, Schr\"{o}dinger equations with a spatially decaying nonlinearity: existence and stability of standing waves, {\it Discrete Contin. Dyn. Syst.}, {\bf21(1)} (2008), 137-186.
\bibitem{G} Q. Guo, A note on concentration for blowup solutions to supercritical nonlinear Schr\"{o}dinger equations, {\it P. Amer. Math. Soc.}, {\bf 141(12)} (2014), 4215-4227.
\bibitem{GGW} Z.M. Guo, X.H. Guan, F.S. Wan, Sobolev type embedding and weak solutions with a prescribed singular set, {\it Sci. China Math.}, {\bf 59(10)} (2016), 1975-1994. https://doi.
    org/10.1007/s11425-015-0698-0
\bibitem{GC} C. Guzm\'{a}n, On well posedness for the inhomogeneous nonlinear Schr\"{o}dinger equation, {\it Nonlinear Anal. RWA}, {\bf 37} (2017), 249-286.
    https://doi.org/10.1016/j.nonr
    wa.2017.02.018
\bibitem{HK} T. Hmidi, S. Keraani, Blowup theory for the critical nonlinear Schr\"{o}dinger equations revisited, {\it Int. Math. Res. Notices}, {\bf 46} (2005), 2815-2828.
\bibitem{LL} L.D. Landau, E.M. Lifshitz, On the theory of the dispersion of magnetic permeability in ferromagnetic bodies. {\it Phys. Z. Sowjet.}, {\bf 8} (1935),
    153-169.
\bibitem{L} P. L. Lions, The concentration-compactness principle in the calculus of variations, the locally compact case, {\it Ann. I. H. Poincare}, {\bf 1} (1984), 109-145.
\bibitem{MRS} F. Merle, P. Rapha\"{e}l, J. Szeftel, On collapsing ring blow-up solutions to the mass supercritical nonlinear Schr\"{o}dinger equations, {\it Duke Math J.}, {\bf 163(2)} (2014), 369-431. https://doi.org/10.1215/00127094-2430477
\bibitem{MR} F. Merle, P. Rapha\"{e}l, Blow up of the critical norm for some radial $L^2$ super critical nonlinear Schr\"{o}dinger equations, {\it Amer. J. Math.}, {\bf 130(4)} (2008), 945-978.
\bibitem{OT} T. Ogawa, Y. Tsutsumi, Blow-up of $H^1$ solution for the nonlinear  Schr\"{o}dinger equations, {\it J. Differential Equations}, {\bf 92} (1991), 317-330.
\bibitem{PT}  G.X. Pan, L. Tang, Solvability for Schr\"{o}dinger equations with discontinuous coefficients, {\it J. Funct. Anal.}, {\bf 270} (2016), 88-133. https://doi.org/10.1016/j.jfa.20
    15.10.004
\bibitem{PTW} P.Y.H. Pang, H. Tang, Y. Wang, The Cauchy problem for non-autonomous nonlinear Schr\"{o}dinger equations, {\it Sci. China Ser. A}, {\bf 48(4)} (2005), 522-538.
\bibitem{PTW2} P.Y.H. Pang, H. Tang, Y. Wang, Blow-up solutions of inhomogeneous nonlinear Schr\"{o}dinger equations, {\it Calc. Var. PDE}, {\bf 26(2)} (2006),
    137-169.
\bibitem{RS} P. Rapha\"{e}l, J. Szeftel, Existence and uniqueness of minimal blow-up solutions to an inhomogeneous mass critical NLS, {\it J. Am. Math. Soc.}, {\bf 24(2)} (2011), 471-546.  https://doi.org/10.1090/S0894-0347-2010-00688-1
\bibitem{ST} G. Staffilani, D. Tataru, Strichartz estimates for a Schr\"{o}dinger operator with nonsmooth coefficients. {\it Commun. Partial Differential Equations}, {\bf 27} (2002), 1337-1372.
\bibitem{SW2} E. Stein, G. Weiss, Fractional integrals on $n$-dimensional Euclidean space, {\it J. Math. Mech.}, {\bf 7} (1958), 503-514.
\bibitem{SW} J.B. Su, Z.Q. Wang, Sobolev type embedding and quasilinear elliptic equations with radial potentials, {\it J. Differential Equations}, {\bf 250} (2011),
    223-242. https://doi.org/10.1016/j.jde.2010.08.025
\bibitem{WW3} H.Y. Wang, Y.D. Wang, Global inhomogeneous Schr\"{o}dinger flow, {\it Int. J. Math.}, {\bf 11} (2000), 1079-1114.
\bibitem{WW} Z.Q. Wang, M. Willem, Caffarelli-Kohn-Nirenberg inequalities with remainder terms, {\it J. Funct. Anal.}, {\bf 203} (2003), 550-568.
\bibitem{MW} M.I. Weinstein, The nonlinear Schr\"{o}dinger equation-singularity formation, stability and dispersion, {\it Contemp. Math.}, {\bf 99} (1989), 213-232.
\bibitem{MW2} M.I. Weinstein, Nonlinear Schr\"{o}dinger equations and sharp interpolation estimates, {\it Commun. Math. Phys.}, {\bf 87} (1983), 567-576.
\bibitem{ZOZ} B.W. Zheng, T. Ozawa, J. Zhai, Blow-up solutions for a class of divergence Schr\"{o}dinger equations with intercritical inhomogeneous nonlinearity, {\it J. Math. Phys.}, {\bf 64} (2023), https://doi.org/011502. 10.1063/5.0098298
\bibitem{ZZ1} B.W. Zheng, W. Zhu, Strong instability of standing waves for the divergence Schr\"{o}dinger equation with inhomogeneous nonlinearity, {\it J. Math. Anal. Appl.}, {\bf 530(2)} (2024), 127730. https://doi.org/10.1016/j.jmaa.2023.127730
\bibitem{ZZW} P.H. Zhong, C. Zhang, F.G. Wu. Energy decay rate of multidimensional inhomogeneous Landau-Lifshitz-Gilbert equation and Schr\"{o}dinger map equation on
    the sphere, {\it Adv. Differ Equ.}, {\bf 1} (2018), 335-360. https://doi.org/10.1186/s13662-018-1795-4

\end{thebibliography}
\end{document}